\documentclass{amsart}
\pagestyle{plain}
%\renewcommand{\baselinestretch}{1.5}
%\oddsidemargin=.1in
%\evensidemargin=.1in
%\textwidth=6.3in
%\voffset-.5truein
%\textheight=8.8in

\usepackage{amsmath}
\usepackage{amssymb}
\usepackage{latexsym}
\newtheorem{theorem}{Theorem}[section]

\newtheorem{prop}[theorem]{Proposition}

\newtheorem{cor}[theorem]{Corollary}
\newtheorem{lemma}[theorem]{Lemma}

\newtheorem{prob}[theorem]{Problem}
\newtheorem{ques}[theorem]{Question}

\newtheorem{rem}[theorem]{Remark}

\newtheorem*{ack}{Acknowledgements}
\newtheorem{defn}[theorem]{Definition}
\newtheorem{remark}[theorem]{Remark}

%\numberwithin{equation}{section}

%%%%%%%%%%%%%%%GREEK%%%%%%%%%%%%%%%%%%%%%%%%%%%%%%%%
\newcommand{\al}{\alpha}
\newcommand{\be}{\beta}
\newcommand{\g}{\gamma}

\newcommand{\e}{\varepsilon}

\newcommand{\s}{\sigma}

\newcommand{\G}{\Gamma}

\newcommand{\bbN}{\mathbb{N}}

%%%%%%%%%%%%%%% operatornames %%%%%%%%%%%%%%%%%%%%%%%%%%%%%
\newcommand{\Span}{\operatorname{span}}
\providecommand{\FR}{\mathop{\rm FR}\nolimits}
\providecommand{\GL}{\mathop{\rm GL}\nolimits}

\providecommand{\supp}{\mathop{\rm supp}\nolimits}

\newcommand{\conv}{\operatorname{conv}}
\providecommand{\Isom}{\mathop{\rm Isom}\nolimits}

%\newcommand{\max}{\operatorname{max}}
%\renewcommand{\Im}{\operatorname{Im}}

%\renewcommand{\Re}{\operatorname{Re}}

%%%%%%%%%%%%%%%%%%ABBRS%%%%%%%%%%%%%%%%%%%%%%%%%%%%%

\newcommand{\lb}{\label}

\newcommand{\wtw}{if and only if }

%%%%%%%%%%%%%%%%%%%%%%%%NUMBERING%%%%%%%%%%%%%%%%%%%%%%%%
\begin{document}

\dedicatory{Dedicated to the memory of Ted Odell}

\title[On Banach-Mazur problem and maximal norms]{On an isomorphic Banach-Mazur rotation problem  and maximal norms in Banach spaces} 

%\centerline{Preliminary version}
%\centerline{\today}

%----------Author 1

\author{S. J. Dilworth}
\address{
Department of Mathematics\\
University of South Carolina\\
Columbia, SC 29208, USA
}

\email{dilworth@math.sc.edu}

\author{B.~Randrianantoanina}

\address{
Department of Mathematics\\
Miami University\\
Oxford, OH 45056, USA}
\email{randrib@miamioh.edu}

%

%\classno{Primary 46B04, 46B03, 22F50}

\begin{abstract} 
We prove that  the spaces $\ell_p$, $1<p<\infty, p\ne 2$, and all  infinite-dimensional subspaces of their  quotient spaces do not  admit  equivalent almost transitive renormings. This  is a step towards the solution of the Banach-Mazur rotation problem, which asks whether a separable Banach space with a transitive norm has to be isometric or isomorphic to a Hilbert space.
We obtain this as a consequence of a new property of almost transitive spaces with a Schauder basis, namely we prove  that in such spaces the unit vector basis of $\ell_2^2$ belongs to the two-dimensional  asymptotic structure and we  obtain some information  about the  asymptotic structure in higher dimensions. 

 Further, we prove that the spaces $\ell_p$, $1<p<\infty$, $p\ne 2$, have continuum different renormings with 1-unconditional bases each with a different maximal isometry group, and that every symmetric space other than $\ell_2$ has at least a countable number of such renormings. On the other hand we show 
that  the  spaces $\ell_p$,  $1<p<\infty$, $p\ne 2$,  have  continuum different renormings each with an isometry group which is not  contained in any maximal bounded  subgroup  of the group of isomorphisms of $\ell_p$. 
\end{abstract} 
\maketitle

\section{Introduction}

The long-standing  Banach-Mazur rotation problem \cite{Banach} 
asks whether every separable Banach space with a transitive group of linear surjective  isometries is isometrically isomorphic to a Hilbert space. This problem has attracted a lot of attention in the literature (see a survey \cite{BGRP2002}) and there are several related open problems. In particular it is not known whether a separable  Banach space with a transitive group of isometries is  isomorphic to a Hilbert space, and, until now, it was unknown if such a space could be isomorphic to $\ell_p$ for some $p\ne 2$. We show in this paper that this is impossible and we provide further restrictions on classes of spaces that admit transitive or almost transitive equivalent norms  (a norm on $X$ is called {\it almost transitive} if the orbit under the group of isometries of any element $x$ in the unit sphere of $X$ is norm dense in the unit sphere of $X$). 

It is well-known  that  spaces $L_p[0,1]$ for $1<p<\infty$ are almost transitive.  In 1993 
Deville, Godefroy and Zizler \cite[p. 176]{DGZ} (cf. \cite[Problem~8.12]{FRmax}) asked whether every super-reflexive space admits an equivalent almost transitive norm.  Recently Ferenczi and Rosendal \cite{FRmax} answered this question negatively by exhibiting   a complex super-reflexive HI space which does not admit an equivalent  almost transitive renorming. They  asked   \cite[Remark after Theorem~7.5]{FRmax} whether an example can be found among more classical Banach spaces.

The first main result of this paper is that  the spaces $\ell_p$, $1<p<\infty, p\ne 2$, and all  infinite-dimensional subspaces of their  quotient spaces do not  admit  equivalent almost transitive renormings.  We also prove that all infinite-dimensional subspaces of  Asymptotic-$\ell_p$ spaces for  $1\le p<\infty, p\ne 2$, fail to admit  equivalent almost transitive norms (Theorem~\ref{thm: Asymptoticlp}). Moreover we give an example of a super-reflexive space which does not contain either an  Asymptotic-$\ell_p$  space or  a subspace  which admits an
almost transitive norm (Corollary~\ref{example}).

We combine our results with known results about the structure of subspaces of $L_p$, $2<p<\infty$, and we obtain that if $X$ is a subspace of $L_p$, $2<p<\infty$, or, more generally, of any non-commutative $L_p$-space  \cite{HJX10} for  $2<p<\infty$, such that  every subspace of $X$ admits an equivalent almost transitive norm, then $X$ is isomorphic to  a Hilbert space (Corollary~\ref{subspaceofLp}). The same result is also true for 
 subspaces of the Schatten class $S^p(\ell_2)$ for  $1<p<\infty$, $p\ne 2$, (Corollary~\ref{schatten}). 
This suggests the following question.

\begin{prob}  Suppose that every subspace of a Banach space $X$ admits  an equivalent almost transitive renorming. Is $X$ isomorphic to  a Hilbert space?
\end{prob}

As additional information related  to this problem, we prove that if such a space $X$ is isomorphic to a stable Banach space then $X$  is $\ell_2$-saturated (Corollary~\ref{pin12} and Remark~\ref{stable}).

Our results are a consequence of a new property of almost transitive spaces with a Schauder basis. Namely we prove  that in such spaces the unit vector basis of $\ell_2^2$ belongs to the two-dimensional  asymptotic structure. We also obtain some information  about the  asymptotic structure in higher dimensions. Our method relies on an application of the classical Dvoretzky theorem and it  enables us to give up to date the most significant restrictions on isomorphic classes  of spaces which admit a transitive or almost transitive norm. In particular we obtain estimates for the power types of the upper and lower envelopes of $X$ and of $p$ and $q$ in $(p,q)$-estimates of $X$ (Corollaries~\ref{cor: upperenvelope} and \ref{cor: pqestimates}). From this we obtain a version of Krivine's theorem
for spaces with asymptotic unconditional structure and a subspace which admits an  almost transitive norm (Theorem~\ref{upper2}). Another consequence  is a characterization of subspaces of Orlicz sequence spaces which contain a subspace which admits an equivalent almost transitive norm (Theorem~\ref{orlicz}).

The second part of the paper is devoted to the study of maximal renormings on Banach spaces. For a Banach space  $(X,\|\cdot\|)$ we denote by $\Isom(X,\|\cdot\|)$ the group of all linear surjective isometries of $X$. We say that a Banach space  $(X,\|\cdot\|)$ is  {\it maximal} if whenever $|\!|\!|\cdot|\!|\!|$ is an equivalent norm on $X$ such that $\Isom(X,\|\cdot\|)\subseteq\Isom (X,|\!|\!|\cdot|\!|\!|)$, then $\Isom (X,\|\cdot\|)=\Isom (X,|\!|\!|\cdot|\!|\!|)$.
This notion was introduced by Pe\l czy\'nski and Rolewicz \cite{PR62}, cf. \cite{Rol}, and has been extensively studied, see e.g. a  survey \cite{BGRP2002}. Rolewicz  \cite{Rol} proved that all non-hilbertian 1-symmetric spaces  are maximal. In 1982 Wood \cite{W82} posed a problem whether every Banach space admits an equivalent maximal norm. 

The study of isometry groups of renormings of $X$ is equivalent to the study of bounded subgroups 
 of the group $\GL(X)$ of all isomorphisms from $X$ onto $X$. Indeed, if $G$ is a bounded subgroup of $\GL(X)$ we can define an equivalent norm $\|\cdot\|_G$ on $X$ by
$$\|x\|_G=\sup_{g \in G}\|gx\|.$$
Then $G$ is a subgroup of $\Isom(X,\|\cdot\|_G)$. Thus Wood's problem asks whether for every space $X$, $\GL(X)$ has a maximal bounded subgroup.

 In 2013 Ferenczi and Rosendal \cite{FRmax} answered Wood's problem negatively by  exhibiting   a complex super-reflexive space and a real reflexive space, both without a maximal bounded subgroup of the isomorphism group. 

In 2006 Wood \cite{Wood}, cf. \cite[p. 200]{FJiso2} and \cite[p.~1774]{FRmax}, asked, what he called a more natural question, whether for every Banach space there exists an equivalent maximal renorming  whose isometry group contains the original isometry group, i.e. whether every bounded subgroup of $\GL(X)$ is contained in a maximal bounded subgroup of $\GL(X)$.  As elaborated by Wood \cite{Wood} and Ferenczi and Rosendal \cite{FRmax} this question is related to the Dixmier's unitarisability problem whether every countable group all of whose bounded representations on a Hilbert space are unitarisable is amenable, see also \cite{P05}.

There are several known groups with bounded representations which are not unitarizable; however it is unknown whether isometry groups of renormings of Hilbert space induced by these representations are maximal or even contained in a maximal isometry group of another equivalent renorming. The answer to this question would elucidate the following portion of the Banach-Mazur problem.
\begin{prob} (see \cite[Problems 1.2 and 8.7]{FRmax}) Suppose that $|\!|\!|\cdot|\!|\!|$ is an equivalent maximal norm on a Hilbert space $\mathcal{H}$. Is $|\!|\!|\cdot|\!|\!|$ necessarily Euclidean?
\end{prob}
 
In this paper we show that even in Banach spaces  which have a maximal bounded subgroup of $\GL(X)$ there can also exist bounded subgroups of $\GL(X)$  which are not contained in any maximal bounded subgroup of $\GL(X)$. We prove that  $\ell_p$, $1 < p<\infty$, $p\ne2$, the 
2-convexified Tsirelson space $T^{(2)}$, and the space $U$ with a universal unconditional basis, have a  continuum of  renormings  none of whose isometry groups  is  contained in any maximal  bounded subgroup of $\GL(X)$. We note that $T^{(2)}$  is a weak Hilbert space. We do not know whether $T^{(2)}$ or general weak Hilbert spaces, other than $\ell_2$, have a maximal  bounded subgroup of $\GL(X)$. 

We also study maximal  bounded subgroups of $\GL(X)$ for   Banach spaces $X$ with 1-unconditional bases. We prove that $\ell_p$, $1 < p<\infty$, $p\ne 2$, and $U$ have continuum different renormings with 1-unconditional bases each with a different maximal isometry group, and that every  symmetric space other than $\ell_2$ has at least a countable number of such renormings. 
As mentioned above it is unknown whether the Hilbert space has a  unique, up to conjugacy,  maximal bounded subgroup of $\GL(X)$. 
Motivated by our results we  ask the following question.
\begin{ques}
Does there exist a separable Banach space $X$  with a unique, up to conjugacy,   maximal  bounded subgroup of $\GL(X)$? If yes, does $X$ have to be 
 isomorphic to a Hilbert space?
\end{ques}

\section{Almost and convex transitivity}

Throughout this section $X$ and $Y$ will denote  real or complex infinite dimensional 
  Banach spaces   and the term   \textit{subspace}
always  means  a closed \textit{infinite-dimensional} linear subspace. 

If a Banach space $X$ has  a  Schauder basis, a normalized Schauder basis will be denoted $(e_i)$ and its  biorthogonal sequence will be denoted $(e_i^*)$.
 For $n \ge 1$,   $P^X_n$ will denote the basis projection from $X$ onto the linear span of $e_1,\dots,e_n$. The \textit{support} of $x \in X$ is defined by $\supp x =\{i \in \mathbb{N} \colon e_i^*(x) \ne 0\}$. If $x,y \in X$ we write $x < y$ if
$\max \supp x < \min \supp y$ and $x > N$ if $\supp x \subset [N+1,\infty)$. We say that a sequence $(x_i)$ of  vectors is a \textit{normalized block basis} if $x_1 < x_2 < x_3<\dots$ and $\|x_i\|=1$ ($i\ge1$).

\begin{lemma} \label{lem: circle} Suppose that $X$ has a Schauder basis and contains a subspace $Y$ which is almost transitive.  Given $\delta>0$, $y_0 \in Y$, and $N \in \bbN$  there exists $y \in Y$, with $\|P^X_N(y)\| <\delta$, such that for all scalars $a,b$,  we have
\begin{equation} \label{eq: circle1} (1-\delta)(|a|^2\|y_0\|^2 + |b|^2)^{1/2} \le \left\|ay_0+by\right\| \le (1+ \delta)(|a|^2\|y_0\|^2 + |b|^2)^{1/2}. \end{equation} 
\end{lemma}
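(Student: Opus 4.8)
The plan is to combine Dvoretzky's theorem with almost transitivity, using a rank count to push the new vector far out along the basis. First I would reduce to the case $\|y_0\|=1$: since $ay_0=(a\|y_0\|)\,(y_0/\|y_0\|)$, producing a suitable $y$ for the unit vector $y_0/\|y_0\|$ (with $\|y_0\|$ replaced by $1$, for all scalars) gives \eqref{eq: circle1} for $y_0$ after absorbing $\|y_0\|$ into $a$. So assume $\|y_0\|=1$ and fix a small $\eta=\eta(\delta)>0$, to be chosen at the end. The underlying idea: Dvoretzky produces an almost-Euclidean block somewhere in $Y$, almost transitivity rotates one of its unit vectors onto $y_0$, and a dimension count then extracts, inside the rotated copy, a unit vector that is Euclidean-orthogonal to $y_0$ and annihilates $P^X_N$.

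Concretely, I would first apply Dvoretzky's theorem in the infinite-dimensional space $Y$ to obtain $v_1,\dots,v_{N+2}\in Y$ that are $(1+\eta)$-equivalent to the unit vector basis of $\ell_2^{N+2}$. Since $Y$ is almost transitive and $\|v_1\|\approx1$, I can pick $T\in\Isom(Y)$ with $\|Tv_1-y_0\|<\eta$; writing $w_j=Tv_j$, the system $(w_j)$ is again $(1+\eta)$-equivalent to the $\ell_2^{N+2}$ basis and $w_1\approx y_0$. Let $\|\cdot\|_2$ be the Euclidean norm on $\Span(w_1,\dots,w_{N+2})$ for which $(w_j)$ is orthonormal, so that $(1+\eta)^{-1}\|\cdot\|_2\le\|\cdot\|\le(1+\eta)\|\cdot\|_2$ there. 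Now set $G=\Span(w_2,\dots,w_{N+2})$, of dimension $N+1$. Because $P^X_N$ has rank at most $N$, the functionals $e_1^*,\dots,e_N^*$ must vanish simultaneously on some nonzero vector of the $(N+1)$-dimensional space $G$, i.e. $G\cap\bigcap_{i=1}^N\ker e_i^*\ne\{0\}$; I choose $y$ to be a unit vector in this intersection. Then $P^X_N(y)=0<\delta$, and $y\perp_2 w_1$ since $y\in G$.

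It remains to verify \eqref{eq: circle1}. For scalars $a,b$, Euclidean orthogonality of $w_1$ and $y$ together with $\|w_1\|_2=1$ and the norm equivalence give
$$\|ay_0+by\|\le\|aw_1+by\|+|a|\,\|y_0-w_1\|\le(1+\eta)\bigl(|a|^2+|b|^2\|y\|_2^2\bigr)^{1/2}+\eta|a|,$$
and symmetrically for the lower bound. Since $\|y\|_2\in[(1+\eta)^{-1},1+\eta]$ and $|a|\le(|a|^2+|b|^2)^{1/2}$, both bounds lie within a factor of $(|a|^2+|b|^2)^{1/2}$ that tends to $1$ as $\eta\to0$, so a sufficiently small $\eta$ yields \eqref{eq: circle1}. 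The one genuinely delicate point is this final bookkeeping: three independent errors — the Dvoretzky distortion, the gap between the Banach and Euclidean normalizations of $y$, and the transitivity error $\|y_0-w_1\|<\eta$ — must be folded into a single clean factor $(1\pm\delta)$ uniformly in $a,b$, which works only because homogeneity reduces matters to $|a|^2+|b|^2=1$ and the additive transitivity error is controlled by $|a|$, hence relative. The conceptual crux — meeting the far-out requirement $\|P^X_N(y)\|<\delta$ simultaneously with the Euclidean-pair requirement — is handled by performing the rotation $T$ first and only afterwards extracting $y$ from $G$ via the rank bound; the reverse order fails, since $T$ need not preserve the condition $P^X_N=0$.
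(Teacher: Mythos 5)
Your proof is correct, and its first half coincides with the paper's: apply Dvoretzky's theorem inside $Y$, then use almost transitivity to rotate the almost-Euclidean system so that one of its vectors lands next to $y_0$ (the paper compresses these two steps into a single sentence, asserting the existence of $(y_i)_{i=1}^n\subset S_Y$ such that $(y_0,y_1,\dots,y_n)$ is $(1\pm\delta)$-equivalent to a weighted Euclidean basis). Where you genuinely diverge is the mechanism producing $\|P^X_N(y)\|<\delta$. The paper uses compactness rather than a rank count: it fixes $n=n(N,\delta)$ so large that any $n$ vectors in $B_Y$ contain a pair with $\|P^X_N(y_j-y_i)\|<\delta$ (total boundedness of bounded sets in the $N$-dimensional range of $P^X_N$), finds such a pair inside the almost-Euclidean system, and sets $y=(y_j-y_i)/\sqrt2$; since this $y$ is an explicit linear combination of the almost-orthonormal vectors, \eqref{eq: circle1} is read off directly from the Dvoretzky estimate, with no auxiliary Euclidean norm, no orthogonality argument, and no separate transitivity error to track. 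Your rank-count route buys exact annihilation $P^X_N(y)=0$ and a Dvoretzky dimension $N+2$ that is independent of $\delta$, whereas the paper's $n(N,\delta)$ blows up as $\delta\to0$; the price is precisely the bookkeeping you flag (Banach versus Euclidean normalization of $y$, and the additive error $\eta|a|$ from transitivity), which you carry out correctly. Both mechanisms also adapt to the $\ell_r$ variant \eqref{eq: circle2} of the remark following the lemma: the paper replaces $\sqrt2$ by $2^{1/r}$, and your argument survives because a vector of $\Span(w_2,\dots,w_{N+2})$ is disjointly supported from $w_1$ in the $\ell_r^{N+2}$ coordinates. Two harmless final points: your normalization reduction tacitly assumes $y_0\neq0$ (the degenerate case $y_0=0$ follows by applying the statement with any nonzero $y_0'$ and taking $a=0$), and you prove slightly more than asked, since the lemma only requires $\|P^X_N(y)\|<\delta$ rather than $P^X_N(y)=0$.
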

\begin{proof}   By compactness there exists $n := n(N,\delta)$ such that if $(y_i)_{i=1}^n \subset B_Y$
then there exist $1 \le i < j \le n$ such that  $\|P^X_N(y_j-y_i)\| < \delta$. 
 By Dvoretzky's theorem \cite{D61} and almost transitivity of $Y$, there exist 
$(y_i)_{i=1}^n \subset S_Y$ such that for all scalars $a,b_1,\dots,b_n$, we have
\begin{equation} \label{eq: dvoretzky} 
\begin{split} 
(1 - \delta)\left(|a|^2\|y_0\|^2 + \sum_{i=1}^n |b_i|^2\right)^{1/2} &\le \left\|ay_0 + \sum_{i=1}^n b_i y_i\right\| \\ 
&\le (1+ \delta)\left(|a|^2\|y_0\|^2 + \sum_{i=1}^n |b_i|^2\right)^{1/2}.
\end{split}
\end{equation}
 Choose $1 \le i < j \le n$ such that $\|P^X_N(y_j-y_i)\| < \delta$ and set $y = (1/\sqrt2)(y_j-y_i)$. Then  \eqref{eq: circle1} follows from \eqref{eq: dvoretzky}. 
%\begin{equation} \label{eq: circle2}
% (1 - \delta)(|a|^2\|y_0\|^2 + |b|^2)^{1/2} \le \left\|ay_0 + b y\right\| \le (1 + \delta)(|a|^2\|y_0\|^2 + |b|^2)^{1/2}.
% \end{equation}
\end{proof}
\begin{rem} 
For a Banach space $X$, let
$$ \FR(X):= \{1 \le r \le \infty \colon \ell_r \text{ is finitely representable in $X$}\}.$$
 Suppose that $r \in \FR(Y)$, where  $Y$ is as in Lemma~\ref{lem: circle}.   The proof yields (after the obvious modifications)  the same conclusion except \eqref{eq: circle1} should be replaced by
\begin{equation} \label{eq: circle2} (1-\delta)(|a|^r\|y_0\|^r + |b|^r)^{1/r} \le \left\|ay_0+by\right\| \le (1+ \delta)(|a|^r\|y_0\|^r + |b|^r)^{1/r} \end{equation} with the obvious modification for $r = \infty$. In particular, by the Maurey-Pisier theorem  \cite{MP}, this holds for all $r\in [p_Y,2] \cup \{q_Y\}$,
where $$p_Y := \sup\{ 1 \le p \le 2 \colon \text{$Y$ has type $p$}\}$$ and $$ q_Y := \inf\{ 2 \le q < \infty \colon \text{$Y$ has cotype $q$}\}.$$
\end{rem}
\begin{theorem} \label{prop: blockbasis}  
Suppose that $X$ has a Schauder basis and contains a subspace $Y$ which is almost transitive.  Let $r \in \FR(Y)$.
Then, given $\varepsilon>0$ and any sequence $(a_i)$  of nonzero scalars, there exists a normalized block basis $(x_i)$ in $X$
such that, for all $m \ge 1$ and all scalars $b$, we have 
\begin{equation} \label{eq: blockbasis} \begin{split}
(1-\varepsilon)\left(\sum_{k=1}^{m} |a_k|^r + |b|^r\right)^{1/r} &\le \left\|\sum_{k=1}^m a_k x_k + bx_{m+1}\right\| \\ &\le (1+\varepsilon)\left(\sum_{k=1}^{m} |a_k|^r + |b|^r\right)^{1/r}.\end{split} \end{equation} 
\end{theorem}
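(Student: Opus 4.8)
The plan is to construct the block basis by a single induction that repeatedly invokes the $r$-version of Lemma~\ref{lem: circle}, namely inequality~\eqref{eq: circle2} from the Remark (available since $r\in\FR(Y)$), and then to pass from the resulting vectors of $Y$ to a genuine block basis by a small-perturbation argument. Before starting I would fix a rapidly decreasing sequence of tolerances $(\delta_j)_{j\ge2}$ and a summable sequence $(\eta_j)_{j\ge1}$ of positive reals, chosen small enough that the accumulated multiplicative errors from the lemma stay inside $(1\pm\varepsilon_0)$ for a suitable $\varepsilon_0<\varepsilon$, and that the accumulated additive errors from the perturbation consume only the remaining slack.

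First I would build vectors $(y_j)\subset Y$ and integers $N_1<N_2<\cdots$ inductively. Start with any $y_1\in S_Y$ and put $\sigma_m:=\sum_{k=1}^m a_k y_k$. Having chosen $y_1,\dots,y_m$ and $N_m$ with $N_m\ge\max\supp x_m$ (the $x_j$ being produced below), apply \eqref{eq: circle2} with $y_0=\sigma_m\in Y$, with $N=N_m$, and with tolerance $\delta_{m+1}$, to obtain $y_{m+1}\in Y$ with $\|P^X_{N_m}(y_{m+1})\|$ small and
\[
(1-\delta_{m+1})\bigl(|c|^r\|\sigma_m\|^r+|b|^r\bigr)^{1/r}\le\|c\sigma_m+by_{m+1}\|\le(1+\delta_{m+1})\bigl(|c|^r\|\sigma_m\|^r+|b|^r\bigr)^{1/r}
\]
for all scalars $c,b$. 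Taking $c=1$ gives the one-step estimate $\|\sigma_m+by_{m+1}\|\approx(\|\sigma_m\|^r+|b|^r)^{1/r}$, while $c=1,\,b=a_{m+1}$ controls $\|\sigma_{m+1}\|$ in terms of $\|\sigma_m\|$. A routine induction on $m$, multiplying the one-step errors (here I use $r\ge1$, which makes the needed comparisons $|a_{m+1}|\le(\sum_{k=1}^{m+1}|a_k|^r)^{1/r}$ valid), then gives $\|\sigma_m\|\in\bigl[\prod_{j=2}^m(1-\delta_j),\,\prod_{j=2}^m(1+\delta_j)\bigr]\cdot(\sum_{k=1}^m|a_k|^r)^{1/r}$; combining this with the one-step estimate yields
\[
(1-\varepsilon_0)\Bigl(\sum_{k=1}^m|a_k|^r+|b|^r\Bigr)^{1/r}\le\Bigl\|\sum_{k=1}^m a_ky_k+by_{m+1}\Bigr\|\le(1+\varepsilon_0)\Bigl(\sum_{k=1}^m|a_k|^r+|b|^r\Bigr)^{1/r}
\]
for every $m\ge1$ and every scalar $b$.

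Next I would convert $(y_j)$ into a normalized block basis and transfer the estimate. Since the basis is Schauder, after choosing $y_{m+1}$ I pick $M_{m+1}>N_m$ with $\|(I-P^X_{M_{m+1}})y_{m+1}\|$ small, set $N_{m+1}=M_{m+1}$, and let $x_{m+1}$ be the normalization of the genuine block $(P^X_{M_{m+1}}-P^X_{N_m})y_{m+1}$, whose support lies in $(N_m,M_{m+1}]$. Because $\|P^X_{N_m}y_{m+1}\|$ and the tail are both small and $\|y_{m+1}\|\approx1$, I can arrange $\|x_{m+1}-y_{m+1}\|<\eta_{m+1}$; by construction $x_m<x_{m+1}$, so $(x_j)$ is a normalized block basis. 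For the transfer, the triangle inequality bounds the discrepancy between the $x$- and $y$-expressions by $\sum_{k=1}^m|a_k|\,\eta_k+|b|\,\eta_{m+1}$; using $r\ge1$, so that each $|a_k|$ and $|b|$ is dominated by $(\sum_{k=1}^m|a_k|^r+|b|^r)^{1/r}$, together with $\sum_j\eta_j$ small, this discrepancy is at most a small multiple of the right-hand side of \eqref{eq: blockbasis}. Choosing $\varepsilon_0$ and $\sum_j\eta_j$ small enough at the outset then delivers \eqref{eq: blockbasis} with constant $1\pm\varepsilon$.

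I expect the only genuine bookkeeping difficulty to lie in the control of accumulated errors. The multiplicative errors from the infinitely many applications of Lemma~\ref{lem: circle} must be kept within $(1\pm\varepsilon_0)$, and—because the coefficients $(a_k)$ are prescribed in advance and may be unbounded—the additive perturbation errors must be measured against the $\ell_r$-norm of the coefficients rather than bounded absolutely. Both issues are resolved by letting $(\delta_j)$ and $(\eta_j)$ decrease rapidly relative to $\varepsilon$ and to the given coefficients, which is possible because the tolerance in each application of the lemma and the closeness $\|x_j-y_j\|<\eta_j$ are at our disposal.
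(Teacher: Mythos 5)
Your proof is correct and follows essentially the same route as the paper's: an interleaved induction applying Lemma~\ref{lem: circle} (in its $r$-version \eqref{eq: circle2}) to $y_0=\sum_{k=1}^m a_k y_k$ to produce an auxiliary sequence $(y_j)\subset Y$, followed by truncation and normalization to obtain a genuine normalized block basis and a triangle-inequality transfer of the estimates. The only differences are cosmetic bookkeeping: you compound multiplicative errors $\prod(1\pm\delta_j)$ where the paper uses an increasing sequence $\varepsilon_j<\varepsilon/3$ with additive increments, and you measure the perturbation error against $\bigl(\sum_k|a_k|^r+|b|^r\bigr)^{1/r}$ where the paper measures it against $|a_1|$.
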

\begin{proof} First we prove the result for $r=2$.
Let $(\delta_i)_{i=1}^\infty$  be a  (sufficiently small) positive decreasing sequence. 
 We construct $(x_i)$ and an auxiliary sequence $(y_i) \subset Y$ iteratively. Let $y_1 \in S_Y$ be chosen arbitrarily.  Chose a finitely supported vector  $x_1 \in S_X$  such that $\|y_1 - x_1\| < \delta_1$. Let $(\varepsilon_i)$ be a strictly increasing sequence satisfying $0 < \varepsilon_i < \varepsilon/3$
for all $i$. Suppose $m \ge 1$ and that $y_i \in Y$ and finitely supported $x_i \in S_X$  ($1 \le i \le m$) have been chosen such that
 $x_1<x_2<\dots<x_m$, $\|x_i - y_i\| < \delta_i$, and, for all $1 \le j \le m$ and  scalars $b$,
\begin{equation} \label{eq: theyvectors} \begin{split}
(1-\varepsilon_j)\left(\sum_{k=1}^{j-1} |a_k|^2 + |b|^2\right)^{1/2} &\le \left\|\sum_{k=1}^{j-1} a_k y_k + by_{j}\right\|\\ &\le (1+\varepsilon_j)\left(\sum_{k=1}^{j-1} |a_k|^2 + |b|^2\right)^{1/2}. \end{split} \end{equation} 
 Let $N := \max \supp(x_m)$ and let $\delta>0$ be sufficiently small.
By  Lemma~\ref{lem: circle} applied to $y_0 = \sum_{k=1}^m a_k y_k$, there exists $y_{m+1} \in Y$ such that
$\|P^X_N(y_{m+1})\| < \delta$ and, for all scalars $a,b$, 
\begin{equation} \lb{eq: circled}
(1-\delta)(|a|^2\|y_0\|^2 + |b|^2)^{1/2} \le \|ay_0+by_{m+1}\| \le (1+ \delta)(|a|^2\|y_0\|^2 + |b|^2)^{1/2}.  
\end{equation} 
In particular, for all scalars $b$, we have
\begin{align*}
(1-\varepsilon_m)(1-\delta)\left(\sum_{k=1}^{m} |a_k|^2 + |b|^2\right)^{1/2} &\le \left\|\sum_{k=1}^m a_k y_k + by_{m+1}\right\| \\ &\le (1+\varepsilon_m)(1+\delta)\left(\sum_{k=1}^{m} |a_k|^2 + |b|^2\right)^{1/2},
 \end{align*}
and hence \eqref{eq: theyvectors} holds for $n=m+1$ provided  
$\delta(1+ \varepsilon_m)< \varepsilon_{m+1}-\varepsilon_m$, which we may assume. Set $\tilde x_{m+1} := P^X_{N_1}(y_{m+1}) - P^X_N(y_{m+1})$, where $N_1$ is chosen sufficiently large to ensure that $\|y_{m+1} - P^X_{N_1}(y_{m+1})\| < \delta$. Then $x_m<\tilde x_{m+1}$ and
$$\|\tilde x_{m+1} - y_{m+1}\| \le \|y_{m+1} - P^X_{N_1}(y_{m+1})\|  + \|P^X_N(y_{m+1})\| < 2\delta.$$
By \eqref{eq: circled} $|1 - \|y_{m+1}\|| \le \delta$, and hence $|1-\|\tilde x_{m+1}\|| < 3\delta$. 
Let $x_{m+1} = \tilde x_{m+1}/\|\tilde x_{m+1}\|$. Then $\|x_{m+1}\|=1$ and 
$$\|x_{m+1}-y_{m+1}\| \le \|x_{m+1}-\tilde x_{m+1}\| + \|\tilde x_{m+1} - y_{m+1}\| < 3\delta + 2\delta = 5\delta.$$ 
Nence $\|x_{m+1} - y_{m+1}\| < \delta_{m+1}$ provided $5\delta< \delta_{m+1}$, which we may assume. This completes the proof of the inductive step.

Finally, provided 
$(\sum_{i=1}^\infty |a_i| \delta_i) < |a_1| \varepsilon/3$ and $\delta_1 < \varepsilon/3$, which we may assume, \eqref{eq: blockbasis} follows from \eqref{eq: theyvectors} by an easy triangle inequality calculation and the fact that $\|x_i-y_i\|<\delta_i$ for all $i\ge1$.

The proof for $r\in \FR(Y)$ is very similar except  \eqref{eq: circle2} is used instead of \eqref{eq: circle1}.
\end{proof}

Since the behaviour of block bases in $\ell_p$  and $c_0$ is well understood, as an immediate consequence we obtain

\begin{theorem} \lb{lpandtsirelson}
No subspace of $\ell_p$, $1\le p<\infty$, $p\ne 2$,  or of $c_0$ admits an almost transitive renorming.
\end{theorem}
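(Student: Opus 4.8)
The plan is to derive Theorem~\ref{lpandtsirelson} as a direct corollary of Theorem~\ref{prop: blockbasis} by exploiting the rigidity of block bases in $\ell_p$ and $c_0$. Suppose for contradiction that some subspace $Y$ of $\ell_p$ (or $c_0$) admits an equivalent almost transitive norm. The underlying space $X = \ell_p$ has a Schauder basis, so the hypotheses of Theorem~\ref{prop: blockbasis} are met once I know the relevant index $r$ lies in $\FR(Y)$.

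First I would determine which $r$ to feed into Theorem~\ref{prop: blockbasis}. For $X = \ell_p$ with $1 \le p < \infty$, $p \ne 2$, the space $\ell_p$ is finitely representable in every infinite-dimensional subspace $Y$ (by the block-basis / subsymmetric structure of $\ell_p$, or via type/cotype and the Maurey--Pisier description in the Remark: one has $p_Y = q_Y = p$), so $p \in \FR(Y)$. For $c_0$ the analogous choice is $r = \infty$. Applying Theorem~\ref{prop: blockbasis} with this $r$, a small $\varepsilon$, and a fixed sequence of scalars $(a_i)$ produces a normalized block basis $(x_i)$ in $X$ whose finite linear combinations are, up to a factor $(1 \pm \varepsilon)$, computed by the $\ell_r$ norm $(\sum |a_k|^r + |b|^r)^{1/r}$.

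The key step is to confront this with the intrinsic norm on $X$. A normalized block basis $(x_i)$ of the unit vector basis of $\ell_p$ is itself isometrically equivalent to the unit vector basis of $\ell_p$: for disjointly supported blocks one has exactly $\|\sum c_k x_k\| = (\sum |c_k|^p \|x_k\|^p)^{1/p} = (\sum |c_k|^p)^{1/p}$ since $\|x_k\| = 1$. (For $c_0$ the corresponding identity is $\|\sum c_k x_k\| = \max_k |c_k|$.) Thus the block basis must obey the $\ell_p$ (resp. $c_0$) norm estimate, while Theorem~\ref{prop: blockbasis} forces it to obey the $\ell_r = \ell_p$ (resp. $\ell_\infty$) estimate --- these agree, so no immediate contradiction arises from $r = p$ alone. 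To generate the contradiction I instead apply Theorem~\ref{prop: blockbasis} with $r = 2$, which is always available since $2 \in \FR(Y)$ (indeed $2 \in [p_Y, 2]$ for $p \le 2$, and $2 \in \FR(\ell_p)$ for all $p$ by Dvoretzky, so $2 \in \FR(Y)$). This yields a normalized block basis $(x_i)$ whose combinations are $(1 \pm \varepsilon)$-equivalent to the $\ell_2$ norm; but the same block basis must be isometrically $\ell_p$ (resp. $c_0$), and for $p \ne 2$ the $\ell_2$ and $\ell_p$ norms on, say, $\sum_{k=1}^m x_k$ differ by the factor $m^{1/2}$ versus $m^{1/p}$, which is unbounded as $m \to \infty$. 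Choosing $\varepsilon$ small and $m$ large yields the contradiction.

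The main obstacle, and the point requiring care rather than difficulty, is verifying that $2 \in \FR(Y)$ for every infinite-dimensional subspace $Y$ and that block bases of $Y$, once transported to $X$, retain the exact $\ell_p$ behaviour of block bases of the ambient $\ell_p$. The first is immediate from Dvoretzky's theorem, which guarantees $\ell_2$ is finitely representable in any infinite-dimensional Banach space, so $2 \in \FR(Y)$ unconditionally; this is precisely what makes $r = 2$ the universally available index and what drives the whole argument. The second is the standard fact that the norm-closure operations in the inductive construction of Theorem~\ref{prop: blockbasis} produce genuine block vectors of the $\ell_p$-basis, whose disjointness makes their span isometric to $\ell_p^m$. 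Since the mismatch $m^{1/2} \ne m^{1/p}$ is quantitatively unbounded, one only needs $\varepsilon$ fixed below a small absolute constant and $m$ large, so the calculation is routine once the two facts above are in place.
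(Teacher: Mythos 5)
Your overall strategy is exactly the paper's: invoke Theorem~\ref{prop: blockbasis} with $r=2$ (always legitimate, by Dvoretzky's theorem applied to $Y$ itself) and play the resulting $m^{1/2}$ growth of $\bigl\|\sum_{k=1}^m x_k\bigr\|$ against the $m^{1/p}$ growth (resp.\ boundedness, for $c_0$) forced by the rigidity of block bases of the unit vector basis. However, there is a genuine gap at the very first step. You assert that ``the hypotheses of Theorem~\ref{prop: blockbasis} are met'' for $X=\ell_p$ once $r\in\FR(Y)$ is checked, but that theorem requires $X$ to contain a subspace $Y$ which \emph{is} almost transitive in the norm inherited from $X$. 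Your $Y$ only \emph{admits an equivalent} almost transitive norm $|\!|\!|\cdot|\!|\!|$; in the inherited $\ell_p$-norm it is not almost transitive (that it cannot be is a special case of the very statement being proved). Since the proof of Lemma~\ref{lem: circle}, on which Theorem~\ref{prop: blockbasis} rests, uses Dvoretzky's theorem and the density of isometry orbits \emph{in the ambient norm}, you cannot quote the theorem for the pair $\bigl((\ell_p,\|\cdot\|_{\ell_p}),(Y,|\!|\!|\cdot|\!|\!|)\bigr)$. The missing step, which is precisely how the paper begins, is to extend $|\!|\!|\cdot|\!|\!|$ to an equivalent norm on all of $X$ (a standard fact); then $(X,|\!|\!|\cdot|\!|\!|)$ still has the unit vector basis as a Schauder basis and genuinely contains the almost transitive subspace $(Y,|\!|\!|\cdot|\!|\!|)$, and the theorem applies there.

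Once the extension is in place your contradiction does go through, but note the bookkeeping you gloss over: the block basis produced is normalized, and satisfies the $\ell_2$-estimate, in the \emph{extended} norm, whereas it is (up to the normalization of the blocks) isometrically equivalent to the $\ell_p$-basis only in the \emph{original} norm. The two estimates live in different norms and are reconciled only through the constant $C$ of equivalence between $|\!|\!|\cdot|\!|\!|$ and $\|\cdot\|_{\ell_p}$; since $m^{1/2-1/p}$ is unbounded, any fixed $C$ is eventually beaten, but $C$ --- not merely $\varepsilon$ --- must appear in the final inequality, and your text never acknowledges that two norms are in play (your remark that the $r=p$ estimates ``agree'' betrays this conflation). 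Finally, a side error: the claim $p_Y=q_Y=p$ is false (for $p<2$ one has $q_Y=2$, and for $p>2$ one has $p_Y=2$); it is harmless because you abandon $r=p$ and because $p\in\FR(Y)$ holds anyway via block subspaces, but it should not stand as written.
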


\begin{proof} 
 Let $Y$ be a subspace of $X$ so that $Y$ admits an equivalent almost transitive norm $|\!|\!|\cdot|\!|\!|$. It is well-known that any equivalent norm 
on a subspace may be extended to an equivalent norm on the whole space, see e.g. \cite[p.\ 55]{FHHSPZ}. So $|\!|\!|\cdot|\!|\!|$ extends to an equivalent norm
$|\!|\!|\cdot|\!|\!|$ on $X$. 
By Theorem~\ref{prop: blockbasis}
for any $\e>0$ there exists    a normalized block basis $(x_k)$ in $X$
such that  for all $n\in\bbN$, 
\begin{equation} \label{eq: blockbasis2} 
(1-\varepsilon)n^{1/2} \le\Big|\!\Big|\!\Big|\sum_{k=1}^{n}  x_k\Big|\!\Big|\!\Big| \le (1+\varepsilon)n^{1/2}. \end{equation}

It is well known that when $X=\ell_p$, $1\le p<\infty$, every block basis is isometrically equivalent to the standard basis of $\ell_p$,
 see e.g. \cite{LT}. 
Since $|\!|\!|\cdot|\!|\!|$ is $C$-equivalent to $\|\cdot\|_{\ell_p}$, $n$ is arbitrary  and $p\ne 2$, we obtain a contradiction.
The proof for $c_0$ is similar.
\end{proof}

\begin{remark} \label{rem:credit} 
F. Cabello-Sanchez \cite{CS98} proved that almost transitive Banach
spaces which  are either Asplund or have the Radon-Nikodym property
 are actually super-reflexive, and thus, in particular, it was known that  $\ell_1$ and $c_0$ and their subspaces do not admit an equivalent almost transitive norm. We note however that there do exist spaces without the Radon-Nikodym property which are  almost transitive, e.g. it is well known that $L_1[0,1]$ is  almost transitive. Also Lusky \cite{L79} proved that   every separable Banach space $(X,\|\cdot\|)$ is  complemented in a separable almost transitive space $(Y,\|\cdot\|)$, its norm being an extension of the norm on $X$.  
\end{remark}

\begin{remark} It is instructive to observe that  Theorem~\ref{prop: blockbasis}, when applied to the Haar basis of $L_p[0,1]$,
does not contradict the fact that $L_p[0,1]$ is almost transitive. This is simply because the unit vector basis of $\ell_2$ is
($1+\varepsilon$)-equivalent to a block basis of the Haar basis. We thank the anonymous referee for this remark. \end{remark}

\begin{remark} \label{rem: game} 
It is clear from the proof of Theorem~\ref{prop: blockbasis} that if we consider the corresponding \textit{infinite asymptotic game} in $X$ then the vector player has a winning strategy. More precisely, suppose that $\varepsilon>0$ and $(a_n)$ are fixed. Then $\forall n_1\, \exists x_1 > n_1$ s.t. $\forall n_2\, \exists x_2 > n_2\dots$
such that the \textit{outcome} $(x_i)$ satisfies \eqref{eq: blockbasis}. 
\end{remark}

We recall the notion of asymptotic structure introduced by Maurey, Milman, and Tomczak-Jaegermann \cite{MMT}. A basis $(b_i)_{i=1}^n$ of unit vectors for an $n$-dimensional normed space belongs to $\{X, (e_i)\}_n$
if, given $\varepsilon>0$, the second player has a winning strategy in the asymptotic game to produce a sequence $(x_i)_{i=1}^n$ that is $(1+\varepsilon)$-equivalent to $(b_i)_{i=1}^n$.
Precisely, 
$\forall m_1\, \exists x_1>m_1$ s.t. $\forall m_2\, \exists x_2 > m_2$ s.t. $\dots \forall m_n\, \exists x_n > m_n$ such that
there exist $c$ and $C$, with $0<c\le C$ and $C/c < 1+\varepsilon$, such that for all scalars $(a_i)_{i=1}^n$, we have  
$$c\left\|\sum_{i=1}^n a_i b_i\right\| \le \left\|\sum_{i=1}^n a_i x_i \right\| \le C \left\|\sum_{i=1}^n a_i b_i\right\|.$$ 
\begin{theorem} \lb{thm: asymptoticstructure}
Suppose that $X$ has a Schauder basis and contains a subspace $Y$ which is almost transitive. Let $r \in \FR(Y)$.
 Then, for all $n \ge 2$ and for all  nonzero scalars $a_1,\dots,a_{n-1}$, there exists  
$(b_i)_{i=1}^n \in \{X, (e_i)\}_n$ such that for all scalars $\lambda$ and for all $1 \le k < n$, we have
\begin{equation}
\left\|\sum_{i=1}^k a_i b_i  + \lambda b_{k+1}\right\| = \left(\sum_{i=1}^k |a_i|^r + |\lambda|^r\right)^{1/r}.  
\end{equation}
In particular, the unit vector basis of $\ell_r^2$ belongs to $\{X, (e_i)\}_2$.
\end{theorem}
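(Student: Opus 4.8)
The plan is to produce $(b_i)_{i=1}^n$ as a limit, inside the compact Banach--Mazur space $\mathcal{M}_n$ of $n$-dimensional normed spaces carrying a normalized basis of uniformly bounded basis constant, of the block bases delivered by Theorem~\ref{prop: blockbasis}, and then to locate this limit inside $\{X,(e_i)\}_n$ by the compactness of the asymptotic structure. Fix $n\ge 2$ and nonzero scalars $a_1,\dots,a_{n-1}$. By Remark~\ref{rem: game}, for each $\varepsilon>0$ the vector player has a winning strategy in the $n$-round asymptotic game whose every outcome $(x_i)_{i=1}^n$ satisfies the two-sided estimate \eqref{eq: blockbasis}; observe that the combinations controlled there, namely $\sum_{i\le k}a_ix_i+\lambda x_{k+1}$ for $1\le k<n$, are exactly those appearing in the statement.

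Next I would set up the compactness engine. For $\varepsilon>0$ let $\mathcal{T}_\varepsilon\subseteq\mathcal{M}_n$ be the set of those normalized bases $(f_i)_{i=1}^n$ for which $(1-\varepsilon)(\sum_{i\le k}|a_i|^r+|\lambda|^r)^{1/r}\le\|\sum_{i\le k}a_if_i+\lambda f_{k+1}\|\le(1+\varepsilon)(\sum_{i\le k}|a_i|^r+|\lambda|^r)^{1/r}$ for all $1\le k<n$ and all scalars $\lambda$. By homogeneity one may restrict $\lambda$ to the unit circle, so $\mathcal{T}_\varepsilon$ is closed; moreover $\bigcap_{\varepsilon>0}\mathcal{T}_\varepsilon=\mathcal{T}_0$ is precisely the set of spaces satisfying the exact identities of the theorem, and it is nonempty since $\ell_r^n\in\mathcal{T}_0$. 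The winning strategy of the first paragraph forces the outcome to lie in $\mathcal{T}_\varepsilon$. I would then invoke the structural fact of Maurey--Milman--Tomczak-Jaegermann \cite{MMT} that $\{X,(e_i)\}_n$ is a nonempty compact subset of $\mathcal{M}_n$, in the sharpened form: if the vector player can force the outcome into a closed set $\mathcal{F}\subseteq\mathcal{M}_n$, then $\{X,(e_i)\}_n\cap\mathcal{F}\neq\emptyset$. Applying this with $\mathcal{F}=\mathcal{T}_\varepsilon$ gives $\{X,(e_i)\}_n\cap\mathcal{T}_\varepsilon\neq\emptyset$ for every $\varepsilon>0$. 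These sets are compact, being closed subsets of the compact set $\{X,(e_i)\}_n$, and they decrease as $\varepsilon\downarrow 0$, so Cantor's intersection theorem yields $\{X,(e_i)\}_n\cap\mathcal{T}_0\neq\emptyset$. Any $(b_i)_{i=1}^n$ in this intersection lies in $\{X,(e_i)\}_n$ and satisfies the required identities.

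The hard part is the passage from \eqref{eq: blockbasis} to a genuine element of $\{X,(e_i)\}_n$: the estimate \eqref{eq: blockbasis} pins down only the triangular combinations, whereas membership in $\{X,(e_i)\}_n$ demands full $(1+\varepsilon)$-equivalence to a single fixed basis, and the remaining coordinates of the block bases produced by Theorem~\ref{prop: blockbasis} are left uncontrolled and vary from play to play. This gap is bridged exactly by the forcing form of the MMT compactness fact, which converts ``the vector player can drive the outcome into the closed set $\mathcal{T}_\varepsilon$'' into ``some bona fide asymptotic space lies in $\mathcal{T}_\varepsilon$''; carefully formulating and justifying this forcing principle, together with the routine verification that $\mathcal{T}_\varepsilon$ is closed, nonempty, and nested with intersection $\mathcal{T}_0$, is where I expect the real work to be.

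Finally, the ``in particular'' is immediate. For $n=2$ the sole constraint ($k=1$) reads $\|a_1b_1+\lambda b_2\|=(|a_1|^r+|\lambda|^r)^{1/r}$ for the fixed nonzero $a_1$ and every scalar $\lambda$. By homogeneity $\|b_1+tb_2\|=(1+|t|^r)^{1/r}$ for all scalars $t$, whence $\|\mu b_1+\nu b_2\|=(|\mu|^r+|\nu|^r)^{1/r}$ for all $\mu,\nu$, the case $\mu=0$ following by continuity, so $(b_1,b_2)$ is isometrically the unit vector basis of $\ell_r^2$. Since $2\in\FR(Y)$ always holds by Dvoretzky's theorem, as already used in Lemma~\ref{lem: circle}, this in particular places the unit vector basis of $\ell_2^2$ in $\{X,(e_i)\}_2$.
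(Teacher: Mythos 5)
Your argument is correct, but it takes a genuinely different route from the paper's own proof --- in fact it is precisely the alternative route that the paper itself flags in the remark following the theorem, namely that the result can also be proved using \cite[Section~1.5]{MMT} instead of \cite{KOS}. The paper bridges the gap you correctly identify (block bases satisfying \eqref{eq: blockbasis} versus bona fide members of $\{X,(e_i)\}_n$) with the Knaust--Odell--Schlumprecht blocking theorem: given $\varepsilon_n \downarrow 0$, the basis $(e_i)$ is blocked so that every skipped normalized block sequence of length $n$ is $(1+\varepsilon_n)$-equivalent to some element of $\{X,(e_i)\}_n$; running the vector player's winning strategy of Remark~\ref{rem: game} against an opponent whose moves force the outcome to be skipped with respect to this blocking produces elements of $\{X,(e_i)\}_n$ satisfying the identities up to factors tending to $1$, and the exact identities follow in the limit by the same compactness of $\{X,(e_i)\}_n$ that you use. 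Your proof instead bridges the gap with the game-theoretic stabilization theorem of \cite{MMT}: the subspace player has, for each $\varepsilon>0$, a strategy forcing the outcome to be $(1+\varepsilon)$-equivalent to a member of the compact set $\{X,(e_i)\}_n$, and playing that strategy simultaneously against the vector player's strategy of Remark~\ref{rem: game} yields an outcome lying in your closed set $\mathcal{T}_\varepsilon$ and $(1+\varepsilon)$-close to $\{X,(e_i)\}_n$, whence $\{X,(e_i)\}_n \cap \mathcal{T}_\varepsilon \ne \emptyset$ by compactness; Cantor intersection then finishes. This two-strategies deduction is the one piece you should make explicit: your ``forcing principle'' is not a literally stated theorem in \cite{MMT}, but it follows from the stabilization theorem in exactly this way. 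As to what each approach buys: the KOS route hands you a concrete blocking of the given basis in precisely the form needed, while your route is more flexible --- for instance it adapts immediately to the asymptotic structure over the family $\mathfrak{B}^0(X)$ of finite-codimensional subspaces, where no basis is assumed, which is the content of the paper's other remark after the theorem.
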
 
\begin{proof} By a theorem of Knaust, Odell, and Schlumprecht  \cite{KOS} (cf. \cite{O}), given $\varepsilon_n \downarrow 0$, the basis $(e_i)$ may be \textit{blocked} as $(E_k)$,
where $E_k = \operatorname{span} \{e_i \colon m_{k} \le i < m_{k+1}\}$ so that, for all $n \ge 1$,  if $m_n \le x_1 < x_2 < \dots < x_n$ is a \textit{skipped} sequence of unit vectors
with respect to the blocking $(E_k)$ (i.e., if $i < j$ then there 
 exists $k$ such that $x_i < m_k \le m_{k+1} \le x_j$), then $(x_i)_{i=1}^n$ is
$(1+\varepsilon_n)$-equivalent to some $(b_i)_{i=1}^n \in \{X, (e_i)\}_n$. The result now follows from the proof of Theorem~\ref{prop: blockbasis} and Remark~\ref{rem: game}.
\end{proof}
\begin{rem} Theorem~\ref{thm: asymptoticstructure} remains valid for the asymptotic structure associated to the collection $\mathfrak{B}^0(X)$ of subspaces of $X$ of finite codimension (see \cite[Section~1.1]{MMT}), and in that case it is not necessary to assume that $X$ has a basis.  In fact, Theorem~\ref{thm: asymptoticstructure} can also be proved using \cite[Section~1.5]{MMT} instead of \cite{KOS}.
\end{rem}

Recall that $X$ is an asymptotic-$\ell_p$ space ($1 \le p < \infty$) in the sense of \cite{MMT} if there exists $C>0$ such that for all $n \ge 1$, for all 
$(b_i)_{i=1}^n \in \{X, (e_i)\}_n$, and for all scalars $(a_i)_{i=1}^n$, we have \begin{equation} \lb{eq: Asymptoticlp}
 \frac{1}{C} \left(\sum_{i=1}^n |a_i|^p\right)^{1/p} \le \left\|\sum_{i=1}^n a_i b_i \right\| \le C\left(\sum_{i=1}^n |a_i|^p\right)^{1/p}, \end{equation}
and that $X$ is an asymptotic $\ell_\infty$-space if (in place of \eqref{eq: Asymptoticlp}) we have
\begin{equation} \lb{eq: Asymptoticlinfty} \frac{1}{C} \max_{1 \le i \le n} |a_i| \le \left\|\sum_{i=1}^n a_i b_i \right\| \le C\max_{1 \le i \le n}|a_i|. \end{equation}
We shall follow \cite[p.\ 238]{O} in calling such spaces  Asymptotic-$\ell_p$ spaces  (with a capital A) in order to  to distinguish them from a smaller class of spaces which are also known, somewhat confusingly,  as asymptotic-$\ell_p$ spaces \cite[p.\ 226]{O}. This narrower  notion of asymptotic-$\ell_p$ space is defined by the  following condition on finite block sequences: there exists $C>0$ such that for all $n  \le x_1<\dots<x_n$, we have
$$ \frac{1}{C} \left(\sum_{k=1}^n \|x_k\|^p\right)^{1/p} \le \left\| \sum_{k=1}^n  x_k \right\| \le C\left(\sum_{k=1}^n \|x_k\|^p\right)^{1/p},$$ with the obvious modification for $p =\infty$.
The latter narrower class of asymptotic-$\ell_p$ space contains, e.g., the $p$-convexified Tsirelson space $T^{(p)}$ for $1 \le p < \infty$
(see \cite{CJT84} and  \cite{CS89}), while $T^*$, the dual of the Tsirelson space $T$ ($=T^{(1)}$), is an asymptotic-$\ell_\infty$ space.
 
\begin{theorem} \lb{thm: Asymptoticlp}
 Suppose $X$ is an Asymptotic-$\ell_p$ space, $1 \le p \le \infty$, $p \ne 2$. Then no subspace $Y$ of $X$ admits an equivalent almost transitive norm. \end{theorem}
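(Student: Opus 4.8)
The plan is to argue by contradiction, running the scheme of Theorem~\ref{lpandtsirelson} but substituting, for the fact that every block basis of $\ell_p$ is isometrically $\ell_p$, the asymptotic $\ell_p$-behaviour built into the definition of an Asymptotic-$\ell_p$ space. Suppose some subspace $Y$ of $X$ carries an equivalent almost transitive norm $|\!|\!|\cdot|\!|\!|$. Exactly as in Theorem~\ref{lpandtsirelson}, extend $|\!|\!|\cdot|\!|\!|$ to an equivalent norm on all of $X$; then $(X,|\!|\!|\cdot|\!|\!|)$ still has the Schauder basis $(e_i)$ and contains the almost transitive subspace $(Y,|\!|\!|\cdot|\!|\!|)$. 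Since $Y$ is infinite dimensional, Dvoretzky's theorem gives $2\in\FR(Y)$, so Theorem~\ref{thm: asymptoticstructure} applies with $r=2$ to $(X,|\!|\!|\cdot|\!|\!|)$: for every $n\ge2$ there is $(b_i)_{i=1}^n\in\{X,(e_i)\}_n$, with the asymptotic structure computed in $|\!|\!|\cdot|\!|\!|$, such that (taking all coefficients equal to $1$ and $k=n-1$ in the displayed identity)
\[ \Big|\!\Big|\!\Big|\sum_{i=1}^n b_i\Big|\!\Big|\!\Big| = \sqrt n. \]

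The crux is to show that $(X,|\!|\!|\cdot|\!|\!|)$ is \emph{again} an Asymptotic-$\ell_p$ space, so that these vectors, which grow like $n^{1/2}$, can be confronted with the $n^{1/p}$-growth forced by \eqref{eq: Asymptoticlp}. I would prove this permanence under equivalent renorming through the block-sequence description coming from the theorem of Knaust, Odell and Schlumprecht \cite{KOS} already used above. Applied to $(X,\|\cdot\|)$, \cite{KOS} furnishes a blocking $(E_k)$ such that every sufficiently far-out skipped block sequence of $\|\cdot\|$-unit vectors is $(1+\e)$-equivalent in $\|\cdot\|$ to a member of $\{X,(e_i)\}_n$, hence $C(1+\e)$-equivalent to the unit vector basis of $\ell_p^n$ by \eqref{eq: Asymptoticlp}. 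This ``far-out skipped block sequences are uniformly $\ell_p$'' property depends only on the supports of the vectors, so it survives passage to $|\!|\!|\cdot|\!|\!|$: renormalising the same blocks in $|\!|\!|\cdot|\!|\!|$ alters each norm by a bounded factor, so the blocks stay uniformly equivalent to $\ell_p^n$ in $|\!|\!|\cdot|\!|\!|$. Since every member of $\{X,(e_i)\}_n$ computed in $|\!|\!|\cdot|\!|\!|$ is a limit of such blocks, $(X,|\!|\!|\cdot|\!|\!|)$ is Asymptotic-$\ell_p$ with some constant $C'$. In particular the vectors above satisfy $\tfrac1{C'}n^{1/p}\le\sqrt n\le C'n^{1/p}$, whence $n^{|1/p-1/2|}\le C'$ for every $n$; as $p\ne2$ the exponent is positive and $n\to\infty$ gives a contradiction. (For $p=\infty$ one uses \eqref{eq: Asymptoticlinfty}, obtaining $\sqrt n\le C'$, which is equally absurd.)

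I expect the permanence step to be the only delicate point: one must check that the renorming does not enlarge the asymptotic structure, and the support-only, hence renorming-stable, block characterisation from \cite{KOS} is what makes this transparent. An essentially equivalent route sidesteps naming a permanence lemma: rather than invoke Theorem~\ref{thm: asymptoticstructure}, use Theorem~\ref{prop: blockbasis} together with Remark~\ref{rem: game} to build, inside $(X,|\!|\!|\cdot|\!|\!|)$, an honest far-out block sequence $(x_i)$ that is skipped with respect to the blocking $(E_k)$ above and satisfies $|\!|\!|\sum_{i=1}^n x_i|\!|\!|\approx n^{1/2}$; reading the very same vectors in $\|\cdot\|$, where \cite{KOS} and \eqref{eq: Asymptoticlp} force $\approx n^{1/p}$, the two estimates collide once $p\ne2$.
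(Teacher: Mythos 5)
Your proposal is correct and follows essentially the same route as the paper: the paper's own (very terse) proof likewise invokes Theorem~\ref{thm: asymptoticstructure} with $r=2$ to place the unit vector basis of $\ell_2^n$ in the asymptotic structure and then contradicts \eqref{eq: Asymptoticlp} (or \eqref{eq: Asymptoticlinfty} when $p=\infty$) for large $n$. The only difference is one of detail: the paper leaves implicit the permanence point you isolate --- that after extending the almost transitive norm to $X$ the asymptotic structure and the Asymptotic-$\ell_p$ estimate of the renormed space remain uniformly comparable to those of the original norm --- which you justify, correctly, via the support-determined (hence renorming-stable) skipped-block description from \cite{KOS}.
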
 \begin{proof} Suppose $X$ contained such a subspace $Y$. By Theorem~\ref{thm: asymptoticstructure}, for all $n \ge 1$ there exists $(b_i)_{i=1}^n \in \{X, (e_i)\}_n$ such that $\|\sum_{i=1}^n b_i\| = \sqrt{n}$. But this contradicts
\eqref{eq: Asymptoticlp} (or \eqref{eq: Asymptoticlinfty} if $p=\infty$) when $n$ is sufficiently large.
\end{proof}

For $1<p<\infty$, let $\mathcal{C}_p$ denote the class of Banach spaces $X$ which are isomorphic to a subspace 
 of an $\ell_p$-sum of finite-dimensional normed spaces. It is known that if $X \in \mathcal{C}_p$ and $Y$ is isomorphic to a subspace of a quotient space of $X$, then $Y \in \mathcal{C}_p$ \cite{JZ}. In particular, $\mathcal{C}_p$ contains every infinite-dimensional subspace of a quotient space of $\ell_p$.

\begin{cor} \lb{cor: subspaceofaquotient} 
Let $1<p<\infty$, $p\ne2$. If $X \in \mathcal{C}_p$ then $X$ does not admit an equivalent almost transitive norm.
In particular, no infinite-dimensional subspace of a quotient space of $\ell_p$ admits an equivalent almost transitive norm. \end{cor}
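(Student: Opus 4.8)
The plan is to combine Theorem~\ref{thm: Asymptoticlp} with the structural stability of the class $\mathcal{C}_p$ under the operations of passing to subspaces and quotients. First I would recall that, by the definition of $\mathcal{C}_p$, any $X \in \mathcal{C}_p$ is isomorphic to a subspace of an $\ell_p$-sum $(\sum_k F_k)_{\ell_p}$ of finite-dimensional spaces. The key observation is that such an $\ell_p$-sum is an Asymptotic-$\ell_p$ space in the sense of \eqref{eq: Asymptoticlp}: for any $n$ and any $(b_i)_{i=1}^n \in \{X,(e_i)\}_n$, the asymptotic game forces the vectors $x_i$ to be eventually supported on blocks $F_k$ with disjoint ``tails,'' and a standard skipped-block/gliding-hump argument shows that a skipped normalized block sequence in $(\sum_k F_k)_{\ell_p}$ is uniformly equivalent to the unit vector basis of $\ell_p^n$. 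Hence the whole $\ell_p$-sum, and therefore any isomorphic copy of a subspace of it, satisfies the Asymptotic-$\ell_p$ estimate with a uniform constant $C$.

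Next I would invoke the cited result of Johnson and Zippin \cite{JZ}: the class $\mathcal{C}_p$ is closed under passing to subspaces of quotients, so that the hypothesis $X \in \mathcal{C}_p$ already places $X$ inside the Asymptotic-$\ell_p$ framework. Once $X$ is known to be an Asymptotic-$\ell_p$ space, Theorem~\ref{thm: Asymptoticlp} applies verbatim: no infinite-dimensional subspace $Y$ of $X$ can admit an equivalent almost transitive norm, because such a $Y$ would, via Theorem~\ref{thm: asymptoticstructure}, produce vectors $(b_i)_{i=1}^n \in \{X,(e_i)\}_n$ with $\|\sum_{i=1}^n b_i\| = \sqrt{n}$, contradicting the uniform $\ell_p$-estimate for $p \ne 2$ once $n$ is large. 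Taking $Y = X$ itself then yields the first assertion, that $X$ does not admit an almost transitive renorming.

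For the final ``in particular'' clause I would simply note that every infinite-dimensional subspace of a quotient of $\ell_p$ lies in $\mathcal{C}_p$, which is exactly the last sentence preceding the corollary (a direct consequence of \cite{JZ} applied to $X = \ell_p \in \mathcal{C}_p$). Thus the general statement specializes immediately.

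The main obstacle I anticipate is verifying cleanly that membership in $\mathcal{C}_p$ forces the Asymptotic-$\ell_p$ property \eqref{eq: Asymptoticlp} with a \emph{uniform} constant over all $n$ and all admissible $(b_i)_{i=1}^n$, rather than a constant that degrades with dimension. The subtlety is that an isomorphic (not isometric) embedding of $X$ into $(\sum_k F_k)_{\ell_p}$ distorts the norm by a fixed factor, and one must check that the asymptotic game played in $X$ transfers to skipped block sequences in the $\ell_p$-sum without accumulating unbounded distortion; here the finite-dimensionality of the $F_k$ together with a perturbation argument to make the game vectors genuinely skipped block vectors is what keeps $C$ bounded. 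Since the asymptotic structure is an isomorphic invariant and the estimate is required only up to a constant, this perturbation step is routine but is the one place where care is needed.
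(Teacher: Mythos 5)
Your overall strategy coincides with the paper's (realize $X$, up to isomorphism, as a subspace of an Asymptotic-$\ell_p$ sum and quote Theorem~\ref{thm: Asymptoticlp}), but two steps in the middle have genuine gaps. First, you treat $(\sum_k F_k)_{\ell_p}$ itself as an Asymptotic-$\ell_p$ space ``with respect to its natural basis.'' The $F_k$ are arbitrary finite-dimensional spaces, so there is no natural basis: concatenating bases of the $F_k$ (say Auerbach bases) need not yield a Schauder basis, because the basis constants of the $F_k$ can be unbounded, and the definition \eqref{eq: Asymptoticlp} is tied to a basis $(e_i)$. This is precisely why the paper does not argue in $(\sum_k F_k)_{\ell_p}$: it first embeds each finite-dimensional summand $2$-isomorphically into $\ell_\infty^n$ for $n$ large, so that $X$ embeds into the concrete space $Z_p=(\sum_{n=1}^\infty \oplus\, \ell_\infty^n)_p$, which does have a natural $1$-unconditional basis and is Asymptotic-$\ell_p$ with respect to it. (Alternatively one could use the asymptotic structure relative to $\mathfrak{B}^0(X)$, per the remark after Theorem~\ref{thm: asymptoticstructure}, which needs no basis; but your write-up does neither.)

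Second, your claim that ``any isomorphic copy of a subspace'' of the $\ell_p$-sum again satisfies the Asymptotic-$\ell_p$ estimate -- so that $X$ itself becomes an Asymptotic-$\ell_p$ space to which Theorem~\ref{thm: Asymptoticlp} is applied -- is both unjustified and unnecessary. Unjustified: a subspace of a space with a basis need not have a basis at all, and even when it does, transferring the asymptotic game from the subspace's basis to the ambient one is not routine (tail subspaces of the subspace have no reason to lie inside tail subspaces of the ambient space). Unnecessary: Theorem~\ref{thm: Asymptoticlp} is formulated exactly so that this step is never needed -- it asserts that no subspace $Y$ of an Asymptotic-$\ell_p$ space admits an equivalent almost transitive norm. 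The paper's application is therefore to the ambient space $Z_p$, with $Y$ the isomorphic copy of $X$ inside it; since admitting an equivalent almost transitive norm is an isomorphic invariant, the conclusion passes to $X$. Once the theorem is applied to the ambient space rather than to $X$, the ``main obstacle'' you worry about in your last paragraph (the embedding constant degrading a uniform Asymptotic-$\ell_p$ constant for $X$) simply does not arise. Your treatment of the ``in particular'' clause via \cite{JZ} is correct and matches the paper.
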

\begin{proof} $X$ is isomorphic to a subspace of $Z_p :=(\sum_{n=1}^\infty \oplus \ell_\infty^n)_{p}$ since every finite-dimensional normed space is $2$-isomorphic to a subspace of $\ell_\infty^n$ provided $n$ is sufficiently large.  $Z_p$ is an Asymptotic $\ell_p$-space with respect its natural basis $(e_i)$. By Theorem~\ref{thm: Asymptoticlp}, $X$ does not admit an equivalent almost transitive norm. 
\end{proof}

Recall (see  \cite[Section 1.9.1]{MMT}) that the \textit{upper envelope} is the norm $r_X$ on $c_{00}$ given by 
$$ r_X((a_i)) := \sup \left \{\left\|\sum_{i=1}^n a_i b_i\right\| \colon n\ge1, (b_i)_{i=1}^n \in \{X, (e_i)\}_n \right\},$$
and the \textit{lower envelope} is the function $g_X$ on $c_{00}$ given by

$$ g_X((a_i)) := \inf \left \{\left\|\sum_{i=1}^n a_i b_i\right\| \colon n\ge1, (b_i)_{i=1}^n \in \{X, (e_i)\}_n \right\}.$$

\begin{cor} \lb{cor: upperenvelope}
Suppose that $X$ has a Schauder basis $(e_i)$ and contains a subspace $Y$ which is almost transitive. Then
$$g_X((a_i)) \le \left(\sum_{i=1}^\infty |a_i|^{q_Y}\right)^{1/q_Y} \le \left(\sum_{i=1}^\infty |a_i|^{p_Y}\right)^{1/p_Y} \le r_X((a_i)),$$
with the obvious modification if $q_Y = \infty$. 
\end{cor}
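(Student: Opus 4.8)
The plan is to extract both envelopes directly from Theorem~\ref{thm: asymptoticstructure} by specializing its exponent to the two endpoints $r=p_Y$ and $r=q_Y$, and to deduce the central inequality from the elementary monotonicity of $\ell_r$-norms in $r$.

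The first step is to record that $p_Y,q_Y\in\FR(Y)$, which is precisely what the Remark following Lemma~\ref{lem: circle} provides: by the Maurey--Pisier theorem $\ell_r$ is finitely representable in $Y$ for every $r\in[p_Y,2]\cup\{q_Y\}$, and $p_Y\in[p_Y,2]$ while $q_Y$ appears explicitly. If $Y$ has no finite cotype, then $q_Y=\infty$ and the same Remark gives $\infty\in\FR(Y)$; this is the origin of the ``obvious modification'' in the statement, where the $\ell_{q_Y}$-norm is replaced by the sup norm.

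For the main estimate, fix $(a_i)\in c_{00}$ and first suppose all the relevant coefficients are nonzero. Applying Theorem~\ref{thm: asymptoticstructure} with $r=p_Y$ produces a basis $(b_i)_{i=1}^n\in\{X,(e_i)\}_n$ with $\|\sum_{i=1}^n a_i b_i\|=(\sum_{i=1}^n|a_i|^{p_Y})^{1/p_Y}$ (take $k=n-1$ and $\lambda=a_n$ in the displayed identity). Since $r_X$ is a supremum over all asymptotic bases, this one basis already yields $(\sum|a_i|^{p_Y})^{1/p_Y}\le r_X((a_i))$. Running the same argument with $r=q_Y$ gives a basis realizing the $\ell_{q_Y}$-norm, and since $g_X$ is an infimum we obtain $g_X((a_i))\le(\sum|a_i|^{q_Y})^{1/q_Y}$ (respectively $\le\max_i|a_i|$ when $q_Y=\infty$, using the exponent-$\infty$ form of the theorem). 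The central inequality $(\sum|a_i|^{q_Y})^{1/q_Y}\le(\sum|a_i|^{p_Y})^{1/p_Y}$ is then immediate from $p_Y\le2\le q_Y$ and the fact that $\ell_r$-norms decrease as $r$ increases.

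The only delicate point, and the main obstacle, is that Theorem~\ref{thm: asymptoticstructure} requires its coefficients $a_1,\dots,a_{n-1}$ to be nonzero, whereas $r_X$ and $g_X$ are defined on all of $c_{00}$. For the lower bound on $r_X$ this causes no trouble: $r_X$ is a norm, hence continuous on each finite-dimensional coordinate subspace, and both sides of $(\sum|a_i|^{p_Y})^{1/p_Y}\le r_X((a_i))$ are continuous, so the inequality passes from the dense set of fully supported tuples to arbitrary $(a_i)$ (and in particular to sequences whose support is not an initial segment). For the upper bound on $g_X$ continuity points the wrong way, since an infimum of seminorms is only upper semicontinuous; I would instead observe that the inductive construction underlying Theorem~\ref{thm: asymptoticstructure} (that is, the proof of Theorem~\ref{prop: blockbasis} together with Remark~\ref{rem: game}) builds the vectors $x_i$ irrespective of the coefficients, so zero coefficients enter only as vanishing terms and the nonzero hypothesis---used there solely to normalize the relative error---may be dropped once absolute errors are tracked. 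This produces, for every $\varepsilon>0$, a member of $\{X,(e_i)\}_n$ witnessing $g_X((a_i))\le(1+\varepsilon)(\sum|a_i|^{q_Y})^{1/q_Y}$, and letting $\varepsilon\downarrow0$ completes the argument. (Alternatively, one may reduce to the nonzero case using the permanence properties of the asymptotic structure in \cite{MMT}.)
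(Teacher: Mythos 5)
Your proof is correct and is exactly the argument the paper intends: the corollary is stated without proof as an immediate consequence of Theorem~\ref{thm: asymptoticstructure} (applied with $r=p_Y$ and $r=q_Y$, which lie in $\FR(Y)$ by the Maurey--Pisier remark following Lemma~\ref{lem: circle}) together with the monotonicity of $\ell_r$-norms in $r$. Your one point of hesitation---zero coefficients in the $g_X$ bound---is even less delicate than you suggest: since every $(b_i)_{i=1}^n\in\{X,(e_i)\}_n$ consists of unit vectors, the functions $(a_i)\mapsto\left\|\sum_{i=1}^n a_ib_i\right\|$ are uniformly $1$-Lipschitz in the $\ell_1$-norm of the coefficients, so their infimum $g_X$ is itself continuous and the same density argument you use for $r_X$ applies verbatim.
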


%Recall that an unconditional basis $(e_i)$ satisfies an upper (resp.\ lower) $r$-estimate if there exists $C>0$ such that
%$\|\sum_{i=1}^n x_i\|\le C(\sum_{i=1}^n \|x_i\|^r)^{1/r}$ (resp.\ $\|\sum_{i=1}^n x_i\|\ge C(\sum_{i=1}^n \|x_i\|^r)^{1/r}$
% for all $n\ge1$ and for all disjointly supported vectors $(x_i)_{i=1}^n$.  
\begin{theorem} \lb{upper2}
Suppose that $X$ has an unconditional basis $(e_i)$.  If $X$ contains an 
almost transitive subspace $Y$, then there exist $p \in [p_X, p_Y]$  and $q \in[q_Y,q_X]$  such that,  for $r=p$ and $r=q$ and for all $n \ge 1$ and $\varepsilon>0$, there exist disjointly  supported vectors $(x_i)_{i=1}^n\subset X$
 such that $(x_i)_{i=1}^n$ is  $(1+\varepsilon)$-equivalent to the unit vector basis of $\ell_r^n$. In particular, if $Y = X$, then $p = p_X$ and $q = q_X$. 
\end{theorem}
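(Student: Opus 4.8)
The plan is to identify the two exponents with the \emph{disjoint estimate indices} of $X$ and to produce the almost-isometric copies of $\ell_p^n$, $\ell_q^n$ by invoking Krivine's theorem for Banach lattices, using the rest of the hypotheses only to locate the exponents. Set
\[ s_u(X) := \sup\{s : X \text{ satisfies an upper } s\text{-estimate}\}, \quad s_l(X) := \inf\{s : X \text{ satisfies a lower } s\text{-estimate}\}, \]
where an upper (resp.\ lower) $s$-estimate means $\|\sum_i x_i\| \le M(\sum_i \|x_i\|^s)^{1/s}$ (resp.\ $\ge m(\sum_i \|x_i\|^s)^{1/s}$) for all disjointly supported $(x_i)$; these are isomorphic invariants of $(X,(e_i))$ since the basis is unconditional. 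I would take $p := s_u(X)$ and $q := s_l(X)$. By the lattice form of Krivine's theorem, $\ell_{s_u(X)}$ and $\ell_{s_l(X)}$ are lattice---hence disjointly---finitely representable with constant $1+\varepsilon$, so for $r=p$ and $r=q$ and all $n,\varepsilon$ there are disjointly supported $(x_i)_{i=1}^n\subset X$ that are $(1+\varepsilon)$-equivalent to the unit vector basis of $\ell_r^n$. It then remains to prove $p\in[p_X,p_Y]$ and $q\in[q_Y,q_X]$.

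For the inclusions coming from $X$ alone I would use that, for an unconditional basis, type and cotype pass to disjoint estimates. Unconditionality gives $\|\sum_i x_i\| \le K\,\mathrm{Avg}_{\pm}\|\sum_i \pm x_i\|$ and $\mathrm{Avg}_{\pm}\|\sum_i \pm x_i\| \le K\|\sum_i x_i\|$, so type $s$ yields an upper $s$-estimate and cotype $s$ a lower $s$-estimate. Hence every $s<p_X$ satisfies an upper $s$-estimate, giving $p_X \le s_u(X) = p$, and every $s>q_X$ satisfies a lower $s$-estimate, giving $q = s_l(X) \le q_X$.

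For the inclusions that bring in $Y$ I would feed Theorem~\ref{prop: blockbasis} into the two indices. By Maurey--Pisier both $p_Y$ and $q_Y$ lie in $\FR(Y)$, so Theorem~\ref{prop: blockbasis} with $r=p_Y$ and constant coefficients $a_k\equiv 1$ produces a normalized block---hence disjointly supported---sequence $(x_k)$ in $X$ with $\|\sum_{k=1}^n x_k\| \ge (1-\varepsilon)n^{1/p_Y}$ for all $n$. Against an upper $s$-estimate $\|\sum_{k=1}^n x_k\| \le M n^{1/s}$ this forces $s \le p_Y$ for every $s<s_u(X)$, whence $p = s_u(X) \le p_Y$. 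Symmetrically, Theorem~\ref{prop: blockbasis} with $r=q_Y$ gives a disjoint sequence with $\|\sum_{k=1}^n x_k\| \le (1+\varepsilon)n^{1/q_Y}$, which against a lower $s$-estimate forces $s \ge q_Y$ for every $s>s_l(X)$, so $q = s_l(X) \ge q_Y$. (The same two growth estimates are also available directly from Corollary~\ref{cor: upperenvelope}.) Combining the last two paragraphs yields $p\in[p_X,p_Y]$ and $q\in[q_Y,q_X]$; when $Y=X$ the intervals collapse to $\{p_X\}$ and $\{q_X\}$, giving the final assertion.

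The main obstacle is the almost-isometric conclusion: one must know that the \emph{extreme} indices $s_u(X)$ and $s_l(X)$ are realized by disjoint vectors $(1+\varepsilon)$-equivalent to $\ell_p^n$ and $\ell_q^n$, not merely by approximate power-type growth of the fundamental function. This is precisely the hard direction of Krivine's theorem and is where its lattice form is essential; by contrast, the almost-transitivity of $Y$ enters only through Theorem~\ref{prop: blockbasis}, whose sole function here is to pin the Krivine exponents into the prescribed intervals.
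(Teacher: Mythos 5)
Your proof is correct, but it takes a genuinely different route from the paper's. The paper stays inside the asymptotic framework: it applies Corollary~\ref{cor: upperenvelope} to Sari's \emph{disjoint} envelope functions $r^d_X$ and $g^d_X$ of \cite{S04}, concludes that these have power types $p\in[1,p_Y]$ and $q\in[q_Y,\infty]$, and then invokes Sari's Theorem~5.6 --- a Krivine-type theorem for envelope functions --- to place the unit vector bases of $\ell_p^n$ and $\ell_q^n$ inside the disjoint asymptotic structure $\{X,(e_i)\}^d$, which yields the disjointly supported $(1+\varepsilon)$-copies. You instead work with the global lattice indices $s_u(X)$, $s_l(X)$ and appeal to an index-attainment theorem for Banach lattices; the skeleton is the same (an extremal exponent, a Krivine-type realization of it, and Theorem~\ref{prop: blockbasis} with constant coefficients to trap the exponent below $p_Y$, resp.\ above $q_Y$), but the machinery is classical Banach-lattice theory rather than asymptotic structure theory. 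Two remarks. First, be precise about the external ingredient: Krivine's lattice theorem by itself only produces \emph{some} $p$ for which $\ell_p$ is disjointly finitely representable in $X$; what you actually need is the stronger statement that the extremal indices $s_u(X)$ and $s_l(X)$ are themselves attained by disjoint vectors $(1+\varepsilon)$-equivalent to the corresponding unit vector bases. This is true --- it is the lattice analogue of the Maurey--Pisier attainment theorem, proved from Krivine's theorem together with submultiplicativity of the disjoint fundamental functions, and it appears in Lindenstrauss--Tzafriri, \emph{Classical Banach Spaces II}, Section~1.f --- so your argument is complete once that result (not just ``Krivine's theorem'') is cited; note it is not in this paper's bibliography, which only contains Volume~I. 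Second, the trade-off between the two routes: yours makes the containments $p\ge p_X$ and $q\le q_X$ explicit (type/cotype plus unconditionality give upper/lower estimates on disjoint vectors), a point the paper's proof leaves implicit since it only records $p\in[1,p_Y]$ and $q\in[q_Y,\infty]$; on the other hand, the paper's route produces the disjoint $\ell_r^n$'s inside the asymptotic structure, i.e.\ arbitrarily far out with respect to the basis, and, as the remark following Theorem~\ref{upper2} notes, it works verbatim when $X$ merely has asymptotic unconditional structure --- your argument uses the lattice structure of $X$ globally and does not extend to that setting.
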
 

\begin{proof} 
Sari \cite{S04} defined $\{X,(e_i)\}^d$ as the collection of all finite sequences $(w_i)_{i=1}^n$ such that, for some $m \ge n$, there exist $(b_i)_{i=1}^m \in \{X,(e_i)\}_m$ and a partition $\{A_1,\dots,A_n\}$ of $\{1,\dots,m\}$ such that $w_i = \sum_{j \in A_i} \alpha_j b_j$ for some scalars $(\alpha_j)_{j=1}^m$ and $\|w_i\|=1$. Thus, $(w_i)_{i=1}^n$ is a normalized basis for a block subspace of the asymptotic space with basis $(b_i)$. 

Recall (see  \cite[Definition 3.1]{S04}) that the  \textit{upper disjoint-envelope function}
is the norm $r^d_X$ on $c_{00}$ given by
$$ r^d_X((a_i)) := \sup  \left\{\left\|\sum_{i=1}^n a_i w_i\right\| \colon n\ge1, (w_i)_{i=1}^n \in \{X, (e_i)\}^d\right\}.$$ The \textit{lower disjoint-envelope function}  $g^d_X$ is defined similarly with  supremum replaced by  infimum. 
 Corollary~\ref{cor: upperenvelope} gives
$$g^d_X((a_i)) \le \left(\sum_{i=1}^\infty |a_i|^{q_Y}\right)^{1/q_Y} \le \left(\sum_{i=1}^\infty |a_i|^{p_Y}\right)^{1/p_Y} \le r^d_X((a_i)).$$
In particular, $r^d_X$ has \textit{power type} $p$ and $g^d_X$ has power type $q$  
 for some $p \in [1,p_Y]$ and $q \in [q_Y,\infty]$ (see \cite[Definition 5.3]{S04}). It follows from  \cite[Theorem 5.6]{S04} that $\{X, (e_i)\}^d$
contains the unit vector basis of $\ell_p^n$ and of $\ell_q^n$ for all $n\ge1$, which implies, for $r=p$ and $r=q$, the existence of disjointly supported vectors $(x_i)_{i=1}^n\subset X$
 such that $(x_i)_{i=1}^n$ is  $(1+\varepsilon)$-equivalent to the unit vector basis of $\ell_r^n$.
%
%Since $X$ has type $r$ for all $r< p_X$ and cotype $q$ for all $q>q_X$, it follows that $p \ge p_X$ and $q \le q_X$. Hence, if $Y=X$, then
%$p=p_X$ and $q=q_X$.
\end{proof}

\begin{rem} Theorem~\ref{upper2} holds also (with the same proof) under the weaker assumption that $X$ has asymptotic unconditional
structure. See \cite[Section~2.2.1]{MMT} for the definition of this notion. \end{rem} 

Recall that a basis satisfies $(p,q)$-estimates, where $1 < q \le p < \infty$, if there exists $C>0$ such that
\begin{equation*} \frac{1}{C}\left(\sum_{k=1}^n \|x_k\|^p\right)^{1/p} \le \left\|\sum_{k=1}^n x_k\right\|  \le C\left(\sum_{k=1}^n \|x_k\|^q\right)^{1/q},
 \end{equation*} 
whenever $x_1<x_2<\dots<x_n$. Theorem~\ref{prop: blockbasis} has the following immediate consequence.

\begin{cor} \label{cor: pqestimates}
Suppose that a Banach space  $X$ with a Schauder basis $(e_i)$ contains a subspace $Y$ which admits an equivalent almost transitive norm. 
 If   $(e_i)$ satisfies $(p,q)$-estimates, then $q \le 2 \le p$. 
\end{cor}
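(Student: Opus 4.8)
The plan is to feed the constant coefficient sequence into Theorem~\ref{prop: blockbasis} with exponent $r = 2$, and then to play the resulting $\sqrt{n}$-growth of the block basis against the two halves of the $(p,q)$-estimate. The point is that all of the substantive work has already been done in Theorem~\ref{prop: blockbasis}; the corollary is essentially a limit argument squeezing $1/2$ between $1/p$ and $1/q$.

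First I would observe that $2 \in \FR(Y)$ holds automatically, since by Dvoretzky's theorem $\ell_2$ is finitely representable in every infinite-dimensional Banach space, regardless of the type and cotype of $Y$. (This is exactly the case treated first in the proof of Theorem~\ref{prop: blockbasis}, via Lemma~\ref{lem: circle}, so no appeal to Maurey--Pisier is needed here.) Applying Theorem~\ref{prop: blockbasis} with $r = 2$, all coefficients equal to $1$, and some fixed $\varepsilon \in (0,1)$, I obtain a normalized block basis $(x_k)$ of $X$ such that, taking $m = n-1$ and $b = 1$,
\begin{equation*}
(1 - \varepsilon)\, n^{1/2} \le \left\|\sum_{k=1}^n x_k\right\| \le (1 + \varepsilon)\, n^{1/2}
\end{equation*}
for every $n \ge 1$.

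Next I would invoke the $(p,q)$-estimate for this block sequence. Because $\|x_k\| = 1$ for all $k$, the estimate reads
\begin{equation*}
\frac{1}{C}\, n^{1/p} \le \left\|\sum_{k=1}^n x_k\right\| \le C\, n^{1/q}.
\end{equation*}
Combining the upper $(p,q)$-bound with the lower bound on the norm gives $(1-\varepsilon)\,n^{1/2} \le C\, n^{1/q}$, so $n^{1/2 - 1/q} \le C/(1-\varepsilon)$ for all $n$; letting $n \to \infty$ forces $1/2 - 1/q \le 0$, i.e.\ $q \le 2$. Symmetrically, the lower $(p,q)$-bound together with the upper bound on the norm yields $n^{1/p - 1/2} \le C(1+\varepsilon)$ for all $n$, hence $p \ge 2$. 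Together these give $q \le 2 \le p$.

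There is no serious obstacle here beyond two bookkeeping points that I would be careful to get right: that the $\ell_2$ exponent is always admissible in Theorem~\ref{prop: blockbasis} (so that the $\sqrt{n}$ rate is genuinely forced), and that the $(p,q)$-estimate is a statement about \emph{block} sequences, which is precisely the kind of sequence Theorem~\ref{prop: blockbasis} produces. With those in place the conclusion is immediate.
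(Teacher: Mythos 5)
Your final limit computation is fine, but there is a genuine gap at the very first step: you apply Theorem~\ref{prop: blockbasis} to $X$ with its \emph{original} norm, yet that theorem requires the subspace $Y$ to \emph{be} almost transitive in the norm it inherits from $X$. The corollary only assumes that $Y$ \emph{admits} an equivalent almost transitive norm; under the given norm of $X$, $Y$ need not be almost transitive at all (think of $Y$ merely isomorphic to $\ell_2$, which always admits a transitive renorming but in its ambient norm is generally not even almost transitive). So the hypothesis of Theorem~\ref{prop: blockbasis}, as you invoke it, is not satisfied, and the $\sqrt{n}$-growth of a block basis with respect to the original norm of $X$ does not follow from what you have written. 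This is not a bookkeeping point: it is the one nontrivial step of the corollary.

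The missing step --- which is in fact the entire content of the paper's proof --- is a renorming argument. Let $|\!|\!|\cdot|\!|\!|$ be the almost transitive norm on $Y$, and extend it to an equivalent norm $|\!|\!|\cdot|\!|\!|$ on all of $X$ (any equivalent norm on a subspace extends to an equivalent norm on the whole space; this is exactly the device used in the proof of Theorem~\ref{lpandtsirelson}). In $(X,|\!|\!|\cdot|\!|\!|)$ the sequence $(e_i)$ is still a Schauder basis and still satisfies $(p,q)$-estimates, with a new constant $C'$ depending on $C$ and the equivalence constant of the two norms, while $(Y,|\!|\!|\cdot|\!|\!|)$ is now genuinely almost transitive. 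At this point your argument runs verbatim in the renormed space: Theorem~\ref{prop: blockbasis} (i.e.\ \eqref{eq: blockbasis} with $r=2$ and all $a_k=1$) produces a $|\!|\!|\cdot|\!|\!|$-normalized block basis $(x_k)$ with $|\!|\!|\sum_{k=1}^n x_k|\!|\!|$ between $(1-\varepsilon)n^{1/2}$ and $(1+\varepsilon)n^{1/2}$, and comparing with the bounds $\frac{1}{C'}n^{1/p}$ and $C'n^{1/q}$ as $n\to\infty$ forces $q\le 2\le p$. With that insertion your proof coincides with the paper's; without it, the proof does not get off the ground.
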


\begin{proof}  Let $|\!|\!|\cdot|\!|\!|$ be the equivalent almost transitive norm on $Y$.  Then (as in Theorem~\ref{lpandtsirelson})  $|\!|\!|\cdot|\!|\!|$ extends to an equivalent norm
$|\!|\!|\cdot|\!|\!|$ on $X$. Clearly, $(e_i)$ satisfies $(p,q)$-estimates under $|\!|\!|\cdot|\!|\!|$. Hence \eqref{eq: blockbasis} gives the desired conclusion. 
\end{proof}

A natural question, in the light of Theorem~\ref{thm: Asymptoticlp}, is whether every super-reflexive space which does not admit an almost transitive norm must contain an asymptotic-$\ell_p$ space? The next result answers this question negatively. 

\begin{cor} \lb{example}
There exist super-reflexive spaces which do not contain either an Asymptotic-$\ell_p$  space or  a subspace  which admits an 
almost transitive norm. \end{cor}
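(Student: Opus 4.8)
The plan is to isolate a single, renorming-stable growth condition on the upper asymptotic envelope that by itself forbids almost transitive subspaces, and then to exhibit a super-reflexive space satisfying it whose asymptotic structure is nowhere $\ell_p$. For the first point, suppose $X$ has a Schauder basis $(e_i)$ and some subspace $Z\subseteq X$ carries an equivalent almost transitive norm $|\!|\!|\cdot|\!|\!|$. As in the proof of Theorem~\ref{lpandtsirelson} I would extend $|\!|\!|\cdot|\!|\!|$ to an equivalent norm on all of $X$; then $(e_i)$ remains a Schauder basis of $(X,|\!|\!|\cdot|\!|\!|)$ and $Z$ is an almost transitive subspace of it. Since $\ell_2$ is finitely representable in every infinite-dimensional space by Dvoretzky's theorem, $2\in\FR(Z)$, so Theorem~\ref{thm: asymptoticstructure} applied with $r=2$ yields, for each $n$, a sequence $(b_i)_{i=1}^n\in\{(X,|\!|\!|\cdot|\!|\!|),(e_i)\}_n$ with $|\!|\!|\sum_{i=1}^n b_i|\!|\!|=n^{1/2}$; hence the upper envelope of $(X,|\!|\!|\cdot|\!|\!|)$ at the all-ones vector is at least $n^{1/2}$. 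Because $\|\cdot\|$ and $|\!|\!|\cdot|\!|\!|$ are equivalent, the corresponding asymptotic structures, and hence the two upper envelopes, are equivalent up to a fixed constant, so $r_X((1,\dots,1)_n)\ge c\,n^{1/2}$ for some $c>0$ and all $n$. Consequently, any $X$ with $r_X((1,\dots,1)_n)=o(n^{1/2})$ has no subspace admitting an equivalent almost transitive norm.

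For the example I would take a suitably convexified mixed Tsirelson space; concretely, let $X=S^{(2)}$ be the $2$-convexification of Schlumprecht's space $S$, equipped with its $1$-unconditional basis $(e_i)$. The fundamental function of $S$ satisfies $\|\sum_{i=1}^n e_i\|_S\asymp n/\log_2(n+1)$, so $2$-convexifying gives $\|\sum_{i=1}^n e_i\|_X\asymp n^{1/2}/(\log_2(n+1))^{1/2}$; since the basis is (essentially) extremal for the upper estimate, $r_X((1,\dots,1)_n)\asymp n^{1/2}/(\log n)^{1/2}=o(n^{1/2})$. By the previous paragraph, no subspace of $X$ admits an equivalent almost transitive norm. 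Super-reflexivity I would secure by arranging the convexification to be uniformly convex, equivalently by taking a regular mixed Tsirelson space that is both $p$-convex and $q$-concave for some $1<p\le q<\infty$, so that $X$ is super-reflexive while retaining the non-power envelope above.

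It remains to check that no subspace $Z$ of $X$ is Asymptotic-$\ell_p$ for any $1\le p\le\infty$. Since block subspaces are cofinal among subspaces and the Asymptotic-$\ell_p$ property passes to subspaces, it is enough to treat block subspaces. If such a $Z$ were Asymptotic-$\ell_p$, then both its upper and lower envelopes would be equivalent to the pure power $n^{1/p}$ (to $\max_i|a_i|$ when $p=\infty$). For $p\le 2$ this is already impossible: the upper envelope of any block subspace is dominated by $r_X\asymp n^{1/2}/(\log n)^{1/2}=o(n^{1/2})$, whereas $n^{1/p}\ge n^{1/2}$. The case $p>2$ is ruled out by the stabilization of the fundamental function: every block subspace of $S$, hence of $X$, has envelopes of the same order $n^{1/2}/(\log n)^{1/2}$, which is not comparable to any power $n^{1/p}$. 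Together these show that $X$ contains neither an Asymptotic-$\ell_p$ space nor a subspace admitting an almost transitive norm, proving the corollary.

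The main obstacle is the last step for $p>2$: proving that the non-power growth $n^{1/2}/(\log n)^{1/2}$ of the envelope is inherited by \emph{every} subspace, i.e. the coordinate-free stabilization of the fundamental function of the mixed Tsirelson construction. This is the one place where the fine combinatorics of the construction, rather than soft envelope arithmetic, is essential; by contrast, the renorming-invariance of the envelope estimate and the super-reflexivity of the convexification are comparatively routine.
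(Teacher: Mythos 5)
Your first paragraph is essentially correct: it is Corollary~\ref{cor: upperenvelope} of the paper specialized to the all-ones vector (an almost transitive subspace $Y$ has $p_Y\le 2$, hence forces $r_X((1,\dots,1)_n)\ge n^{1/2}$ up to the renorming constant), and the invariance of envelopes under equivalent renormings works as you say. The fatal gap is in your example: for $X=S^{(2)}$, the $2$-convexification of Schlumprecht's space, the claim $r_X((1,\dots,1)_n)\asymp n^{1/2}/(\log n)^{1/2}$ is false; in fact $r_X((1,\dots,1)_n)\asymp n^{1/2}$. The fundamental function $\|\sum_{i=1}^n e_i\|$ is only a \emph{lower} bound for the upper envelope, which is a supremum over all asymptotic realizations, and the basis of $S$ is not extremal for upper estimates. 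Precisely because the lower-estimate constant $1/f(n)=1/\log_2(n+1)$ of $S$ tends to $0$, one can produce, arbitrarily far out and compatibly with the asymptotic game (the basis of $S$ is $1$-subsymmetric), successive normalized vectors $(1+\varepsilon)$-equivalent to the unit vector basis of $\ell_1^m$: split the flat vector $\frac{f(N)}{N}\sum_{i\in I}e_i$, $|I|=N\gg m$, into $m$ successive equal pieces $v_j$; then $\|v_j\|=\frac{f(N)}{m f(N/m)}\to 1/m$, while the lower $f$-estimate applied to the $N$ individual coordinates gives $\|\sum_j a_j v_j\|\ge \frac{1}{m}\sum_j|a_j|$, so the normalized pieces are $(1+o(1))$-equivalent to the $\ell_1^m$ basis. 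Thus $\ell_1^m\in\{S,(e_i)\}_m$ for every $m$ (this is exactly the mechanism behind the arbitrary distortability of $S$, and it is what separates Schlumprecht-type from Tsirelson-type spaces, where the constant stays bounded). After $2$-convexification these pieces become $(1+\varepsilon)$-copies of $\ell_2^m$, so $\ell_2^m\in\{S^{(2)},(e_i)\}_m$ and $r_{S^{(2)}}((1,\dots,1)_m)\ge(1-\varepsilon)\sqrt m$. Your criterion therefore says nothing about $S^{(2)}$; moreover, the "stabilization in every block subspace" you flag as the main obstacle fails already for the whole space, so the difficulty is not where you locate it.

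Note the structural tension: to keep $r_X((1,\dots,1)_n)=o(\sqrt n)$ with genuinely non-power asymptotic behaviour you must prevent $\ell_2^n$'s from entering the asymptotic structure, which pushes you toward bounded-constant (Tsirelson-like) constructions --- but those are Asymptotic-$\ell_p$, which the corollary must simultaneously exclude. The paper escapes this by using a more robust, block-wise criterion in place of the envelope: Corollary~\ref{cor: pqestimates} shows that if the basis satisfies $(p,q)$-estimates and some subspace admits an almost transitive renorming, then $q\le 2\le p$. The example is then the interpolation space $S_{q,r}(\log_2(x+1))$ of \cite{CKKM} with $1<q<r<2$ or $2<q<r<\infty$: it is super-reflexive and satisfies $(r,q)$-estimates (which, when $2<q<r$, in particular yields your envelope bound $r_X((1,\dots,1)_n)\le Cn^{1/q}=o(\sqrt n)$, since an upper block $q$-estimate, unlike the fundamental function, really does dominate the envelope), and it is complementably minimal, has a subsymmetric basis, and contains no $\ell_p$ --- facts which preclude Asymptotic-$\ell_p$ subspaces, since an Asymptotic-$\ell_p$ subspace would contain a copy of the whole space and force the subsymmetric basis to be equivalent to the $\ell_p$ basis. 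If you wish to keep your envelope-based first half, it can be salvaged by replacing $S^{(2)}$ with these spaces; what cannot be salvaged is any argument that reads off the envelope from the fundamental function of a Schlumprecht-type basis.
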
 

\begin{proof} The spaces $S_{q,r}(\log_2(x+1))$  ($1 < q<r<\infty$) constructed in \cite{CKKM} are super-reflexive and satisfy $(r,q)$-estimates. 
Hence, if $1<q<r<2$ or $2<q<r<\infty$,  $S_{q,r}(\log_2(x+1))$ does not contain any subspace which admits an almost transitive norm. Moreover,  $S_{q,r}(\log_2(x+1))$ is complementably minimal, has a subsymmetric basis, and does not contain a copy of any $\ell_p$ space \cite{CKKM}, which is easily seen to preclude the containment of an Asymptotic-$\ell_p$ space. \end{proof} 

\subsection{Orlicz spaces}

 \begin{theorem} \lb{orlicz}
Suppose that $X$ is an Orlicz sequence space $\ell_M$ (where $M$ is an Orlicz function). Then $X$ contains a subspace $Y$ which admits an almost transitive norm if and only if
$X$ contains a subspace isomorphic to $\ell_2$.
\end{theorem}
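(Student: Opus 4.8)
The plan is to prove the two directions of the equivalence separately, with the forward direction being the substantive one.

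\medskip

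For the \emph{backward} direction, suppose $\ell_M$ contains a subspace isomorphic to $\ell_2$. Then I can simply take that subspace as $Y$: the Hilbert space $\ell_2$ is transitive (its isometry group acts transitively on the sphere), hence certainly almost transitive, and an isomorphic copy of $\ell_2$ inside $X$ carries the transitive norm transported from $\ell_2$ via the isomorphism. Thus $Y$ admits an almost transitive renorming, and this direction is essentially immediate.

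\medskip

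For the \emph{forward} direction, suppose some subspace $Y$ of $X = \ell_M$ admits an equivalent almost transitive norm; I want to produce a copy of $\ell_2$ inside $X$. The natural tool is Theorem~\ref{prop: blockbasis} (or the consequences extracted in Corollary~\ref{cor: upperenvelope} and Corollary~\ref{cor: pqestimates}), which forces $2 \in \FR(Y)$-type behaviour: one obtains a normalized block basis of $X$ that is $(1+\varepsilon)$-equivalent to the $\ell_2^n$-basis for every $n$, and more precisely the envelope/$(p,q)$-estimate conclusions pin down the value $2$ as lying between the relevant power types. The key structural fact about Orlicz sequence spaces is that the behaviour of normalized block bases is governed by the Orlicz function $M$: the set of $r$ for which $\ell_r$ embeds into $\ell_M$ as a block basis is an interval $[\alpha_M, \beta_M]$ determined by the Boyd-type indices of $M$ (see Lindenstrauss--Tzafriri). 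The plan is therefore to combine the fact that $X$ admits, near every scale, block bases equivalent to $\ell_2^n$-bases with the classical theory identifying which $\ell_r$'s arise this way, and conclude that $r = 2$ must fall in the admissible interval, so that $\ell_2$ genuinely embeds (isomorphically, not merely finitely) into $\ell_M$.

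\medskip

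The main obstacle is the gap between \emph{finite} and \emph{infinite} representability: Theorem~\ref{prop: blockbasis} only gives, for each $n$ and $\varepsilon$, a finite block basis $(1+\varepsilon)$-equivalent to $\ell_2^n$, whereas the conclusion demands an infinite-dimensional subspace isomorphic to $\ell_2$. Bridging this gap is exactly where the special structure of Orlicz spaces is essential: in a general space uniform finite representability of $\ell_2$ need not yield an isomorphic copy, but in $\ell_M$ the relevant Lindenstrauss--Tzafriri machinery shows that if $\ell_2^n$'s appear uniformly as block bases then $2$ belongs to the interval of indices for which $\ell_r$ embeds as an actual subspace. I would verify that the envelope estimates of Corollary~\ref{cor: upperenvelope}, applied with the type/cotype indices $p_Y \le 2 \le q_Y$, force $2$ into this index interval for $M$, and then invoke the embedding theorem for Orlicz spaces to extract the genuine $\ell_2$-subspace, completing the proof.
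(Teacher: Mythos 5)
Your proposal is correct and follows essentially the same route as the paper: the paper's proof likewise deduces from Corollary~\ref{cor: pqestimates} that the Matuszewska--Orlicz indices satisfy $\alpha_M \le 2 \le \beta_M$ and then invokes the Lindenstrauss--Tzafriri theorem that $\ell_r$ embeds into $\ell_M$ precisely when $r\in[\alpha_M,\beta_M]$ to obtain the copy of $\ell_2$, with the identical observation for the converse. Your bridging of finite versus infinite representability via the index interval is exactly the content of that combination, so there is no gap.
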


 \begin{proof} Corollary~\ref{cor: pqestimates} implies that the Matuszewska-Orlicz indices of $M$ satisfy $\alpha_M \le 2 \le \beta_M$,
which in turn implies by a theorem of Lindenstrauss and Tzafriri \cite{LT71, LT73} (see also \cite{LT})  that $\ell_M$ contains a subspace isomorphic to $\ell_2$.
 Conversely, if $X$ contains a subspace $Y$ isomorphic to $\ell_2$ then $Y$ admits an equivalent transitive norm.
\end{proof}

\subsection{Convex transitive spaces}

We say that a Banach space $X$ is  {\it convex transitive} if for any $x$ in the unit sphere of $X$, $\overline{\conv}\{Tx: T\in\Isom(X,\|\cdot\|)\}$ is equal to the unit ball of $X$. This notion was introduced in \cite{PR62} where it was shown that in general it is weaker than almost transitivity. However 
  in super-reflexive Banach spaces convex transitivity is equivalent to almost transitivity \cite{CS98}.   A long list of additional related results is summarized in
\cite[Theorem 6.8 and Corollary~6.9]{BGRP2002}, see also \cite{T08,FR2011}. Thus, in particular, we obtain from Corollary~\ref{cor: subspaceofaquotient}:

\begin{cor} \lb{cor: convextransitive} 
For $1<p<\infty$, $p\ne 2$,  no infinite-dimensional subspace of a quotient space of $\ell_p$ admits a convex transitive renorming. 
\end{cor}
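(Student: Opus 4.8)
The plan is to deduce Corollary~\ref{cor: convextransitive} directly from Corollary~\ref{cor: subspaceofaquotient} together with the quoted equivalence between convex transitivity and almost transitivity in the super-reflexive setting. First I would observe that for $1<p<\infty$, $p\ne 2$, the space $\ell_p$ is super-reflexive, and that super-reflexivity passes to subspaces, to quotients, and to subspaces of quotients; hence every infinite-dimensional subspace $Y$ of a quotient space of $\ell_p$ is itself super-reflexive. Next I would recall the result cited in the text (the equivalence established in \cite{CS98}, as summarized in \cite[Theorem~6.8 and Corollary~6.9]{BGRP2002}) that in a super-reflexive Banach space convex transitivity is equivalent to almost transitivity. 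These two facts together reduce the convex-transitive statement to the already-proved almost-transitive statement.

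The argument then runs by contradiction. Suppose some infinite-dimensional subspace $Y$ of a quotient space of $\ell_p$ admitted an equivalent convex transitive renorming $|\!|\!|\cdot|\!|\!|$. Since the renorming is equivalent, $(Y,|\!|\!|\cdot|\!|\!|)$ remains super-reflexive (super-reflexivity is an isomorphic invariant). By the super-reflexive equivalence, $|\!|\!|\cdot|\!|\!|$ is then in fact almost transitive. But this directly contradicts Corollary~\ref{cor: subspaceofaquotient}, which asserts that no infinite-dimensional subspace of a quotient space of $\ell_p$ (for $1<p<\infty$, $p\ne 2$) admits an equivalent almost transitive norm. This contradiction establishes the corollary.

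I do not anticipate a genuine obstacle here, since the result is a formal consequence of two invocations: the permanence of super-reflexivity and the cited equivalence theorem. The only point requiring a word of care is making sure the equivalence between convex and almost transitivity is being applied to the \emph{renormed} space $(Y,|\!|\!|\cdot|\!|\!|)$ rather than to $Y$ with its inherited norm; this is legitimate precisely because super-reflexivity is preserved under equivalent renorming, so $(Y,|\!|\!|\cdot|\!|\!|)$ satisfies the hypothesis of the equivalence theorem. Beyond that the proof is essentially a one-line reduction, which is why I would present it as a short corollary of the almost-transitive result rather than developing any new machinery.
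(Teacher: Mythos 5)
Your proposal is correct and follows exactly the paper's own route: the paper deduces Corollary~\ref{cor: convextransitive} from Corollary~\ref{cor: subspaceofaquotient} by invoking the equivalence of convex transitivity and almost transitivity for super-reflexive spaces \cite{CS98}, which is precisely your reduction. Your explicit attention to the permanence of super-reflexivity under passing to subspaces, quotients, and equivalent renormings simply fills in details the paper leaves implicit.
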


It is well known that the spaces $L_p[0,1]$, $1<p<\infty$, with the original norm are almost transitive. Next we consider their subspaces which admit an almost transitive renorming. 

\begin{cor} \lb{subspaceofLp}
Let $X$ be a subspace of $L_p[0,1]$, $2<p<\infty$, which admits an equivalent convex transitive norm. Then $X$ contains a subspace isomorphic to
$\ell_2$. 
\end{cor}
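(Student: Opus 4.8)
The plan is to reduce the convex transitivity hypothesis to almost transitivity and then to combine the known structure theory of subspaces of $L_p$ for $p>2$ with Corollary~\ref{cor: subspaceofaquotient}.

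First I would note that $L_p[0,1]$ is super-reflexive for $1<p<\infty$, that super-reflexivity passes to closed subspaces, and that it is an isomorphic invariant. Consequently $X$, equipped with its equivalent convex transitive norm, is a super-reflexive convex transitive space. By the result of Cabello-Sanchez \cite{CS98} quoted above (convex transitivity and almost transitivity coincide in the super-reflexive setting), it follows that $X$ in fact admits an equivalent \emph{almost} transitive norm.

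Next I would argue by contradiction. Suppose $X$ contains no subspace isomorphic to $\ell_2$. Here I would invoke the classical dichotomy for subspaces of $L_p$, $2<p<\infty$, in the sharp form due to Johnson and Odell: a subspace of $L_p$ fails to contain an isomorphic copy of $\ell_2$ precisely when it is isomorphic to a subspace of $\ell_p$. Applying this to $X$ yields that $X$ embeds isomorphically into $\ell_p$, so $X\in\mathcal{C}_p$. But Corollary~\ref{cor: subspaceofaquotient} asserts that for $p\ne 2$ no member of $\mathcal{C}_p$ admits an equivalent almost transitive norm, which contradicts the conclusion of the first step. Hence $X$ must contain a subspace isomorphic to $\ell_2$.

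The only genuinely nontrivial external input is the Johnson-Odell embedding theorem; granting it, the argument is a short chain of implications. The point I would take care to verify is twofold: that the renormed space $X$ still qualifies for the Cabello-Sanchez equivalence (this needs only super-reflexivity, which is isomorphically invariant and therefore unaffected by the renorming), and that the hypothesis ``$X$ does not contain $\ell_2$'' is exactly the condition under which Johnson-Odell produces an isomorphic embedding of $X$ into $\ell_p$, so that Corollary~\ref{cor: subspaceofaquotient} genuinely applies.
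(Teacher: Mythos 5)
Your proof is correct and is essentially the paper's own argument: the paper likewise applies the Johnson--Odell dichotomy and then invokes Corollary~\ref{cor: convextransitive}, which the paper had already derived from Corollary~\ref{cor: subspaceofaquotient} together with the Cabello-S\'anchez equivalence of convex and almost transitivity in super-reflexive spaces---precisely the reduction you carried out by hand. The only difference is that you inline that reduction for $X$ itself rather than citing the ready-made convex transitive corollary, which changes nothing of substance.
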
  
\begin{proof} By \cite{JO} either $X$  contains a subspace isomorphic to $\ell_2$ or $X$ is isomorphic to a subspace of $\ell_p$. By Corollary~\ref{cor: convextransitive}, the latter case would contradict the fact that $X$ admits a convex transitive norm, which proves the corollary.
\end{proof} 

\begin{cor} \lb{pin2infty}
Let $X$ be a subspace of $L_p[0,1]$, $2<p<\infty$, or, more generally, of any non-commutative $L_p$-space  \cite{HJX10} for  $2<p<\infty$, so that every subspace $Y$ of $X$ admits an equivalent convex transitive norm, then $X$ is isomorphic to $\ell_2$.
\end{cor}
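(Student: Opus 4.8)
The plan is to reduce the statement for a general subspace $X$ to the already-proved case for individual subspaces by exploiting the hypothesis that \emph{every} subspace $Y$ of $X$ admits a convex transitive renorming, combined with the structural dichotomy available for subspaces of non-commutative $L_p$ for $2<p<\infty$. First I would invoke the relevant subspace structure theorem for these $L_p$-spaces: a theorem in the spirit of the commutative Johnson--Odell result \cite{JO}, now available in the non-commutative setting \cite{HJX10}, asserting that any subspace $Z$ of such an $L_p$-space either contains a copy of $\ell_2$ or embeds isomorphically into (a space of the type governed by) $\ell_p$. The key observation is that Corollary~\ref{subspaceofLp} already handles exactly this dichotomy at the level of a single subspace: a convex transitive subspace of $L_p$ must contain $\ell_2$, since the alternative of embedding into $\ell_p$ is ruled out by Corollary~\ref{cor: convextransitive}.

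The main work is then to upgrade "every subspace contains $\ell_2$" to "$X$ is isomorphic to $\ell_2$." Here I would argue by contradiction and aim to produce inside $X$ a subspace having none of the admissible forms. Since every subspace $Y$ of $X$ is convex transitive, Corollary~\ref{subspaceofLp} (applied to each such $Y$) shows every subspace of $X$ contains a further subspace isomorphic to $\ell_2$; in other words, $X$ is $\ell_2$-saturated. The step that converts $\ell_2$-saturation into an actual isomorphism with $\ell_2$ is where the hypothesis on \emph{all} subspaces does real work: I would combine $\ell_2$-saturation with the convex transitivity of $X$ itself. By Corollary~\ref{cor: upperenvelope} (via the passage from convex transitivity to almost transitivity in super-reflexive spaces, which holds by \cite{CS98} since these $L_p$-spaces are super-reflexive for $1<p<\infty$), the upper and lower envelopes of $X$ are both pinched at the value $2$, forcing $X$ to behave like $\ell_2$ on its asymptotic and block structure. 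In the $L_p$ category this rigidity, together with the type/cotype constraints $p_X=q_X=2$ that the envelope estimates impose, should force $X$ to be isomorphic to a Hilbert space.

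Concretely, I expect the cleanest route is: (i) observe $X$ is super-reflexive, so by \cite{CS98} convex transitivity of $X$ and of every subspace is equivalent to almost transitivity; (ii) apply Corollary~\ref{subspaceofLp} to $X$ and to an arbitrary subspace $Y$, obtaining $\ell_2$-saturation of $X$; (iii) use the envelope estimates of Corollary~\ref{cor: upperenvelope} to conclude $p_X = q_X = 2$, i.e. $X$ has type $2$ and cotype $2$; (iv) by the Kwapie\'n theorem, a Banach space with type $2$ and cotype $2$ is isomorphic to a Hilbert space, so $X$ is isomorphic to $\ell_2$.

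The hard part will be step (iii)--(iv): verifying that the almost transitivity of $X$ itself (not merely of its subspaces) yields the sharp two-sided envelope pinch $p_X=q_X=2$ needed to invoke Kwapie\'n's theorem. The subtlety is that Corollary~\ref{cor: upperenvelope} bounds the envelopes in terms of $p_Y$ and $q_Y$ for an almost transitive \emph{subspace} $Y$, whereas the conclusion requires controlling the type and cotype of $X$ globally; bridging this gap is where one must use that $X$ lives inside an $L_p$-space with its known type/cotype behaviour, so that the block structure produced by Theorem~\ref{prop: blockbasis} inside $X$ forces the Hilbertian estimates on all of $X$ rather than on a single subspace. I would therefore expect to lean on the interplay between the $L_p$-subspace dichotomy and the envelope pinch, rather than on a soft general-space argument.
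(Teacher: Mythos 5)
Your reduction to results about individual subspaces is the right instinct, but the bridge you propose from ``every subspace of $X$ contains $\ell_2$'' to ``$X$ is isomorphic to $\ell_2$'' is exactly where the argument breaks, and it is the step the paper handles with a different (and much stronger) tool. The paper's proof is two lines: by Kadets--Pe\l czy\'nski \cite{KP} in the commutative case, and by Raynaud--Xu \cite[Theorem~0.2]{RX03} in the noncommutative case, a subspace $X$ of (noncommutative) $L_p$, $2<p<\infty$, is either isomorphic to $\ell_2$ or contains a subspace $Y$ isomorphic to $\ell_p$; in the latter case $Y$ is a subspace of $X$ which, by Corollary~\ref{cor: convextransitive}, admits no convex transitive renorming, contradicting the hypothesis. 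This dichotomy is strictly stronger than the Johnson--Odell type dichotomy (``contains $\ell_2$ or embeds into $\ell_p$'') that you invoke and that underlies Corollary~\ref{subspaceofLp}; it is applied once, to $X$ itself, so no saturation argument, no envelopes, and no Kwapie\'n theorem are needed.

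Your steps (iii)--(iv) do not work as stated. First, Corollary~\ref{cor: upperenvelope} controls the asymptotic envelopes of a space with a basis in terms of $p_Y$ and $q_Y$ of an almost transitive \emph{subspace}; it gives no control of the type/cotype indices of $X$, because asymptotic structure does not determine local (finite-dimensional) structure: the $2$-convexified Tsirelson space $T^{(2)}$, discussed at length in this paper, is asymptotic-$\ell_2$, so its envelopes are pinched at $2$, yet it is not isomorphic to $\ell_2$. Second, even granting $p_X=q_X=2$, this only means type $p$ for all $p<2$ and cotype $q$ for all $q>2$; Kwapie\'n's theorem requires genuine type $2$ and cotype $2$, which need not be attained (weak Hilbert spaces such as $T^{(2)}$ witness precisely this failure). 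Third, the missing ingredient is cotype $2$ of $X$: it is not inherited from $L_p$ (which has cotype $p>2$), and nothing in your outline produces it. The statement ``an $\ell_2$-saturated subspace of $L_p$, $p>2$, is isomorphic to $\ell_2$'' is true, but its proof \emph{is} the Kadets--Pe\l czy\'nski dichotomy, i.e., the tool you would have to import anyway; once imported, the saturation and envelope machinery becomes superfluous. A minor further point: in the noncommutative case the relevant result is \cite[Theorem~0.2]{RX03}, not a Johnson--Odell type embedding theorem from \cite{HJX10}; no such noncommutative analogue of \cite{JO} is being used or cited in the paper.
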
 
\begin{proof}
In the commutative case by \cite{KP}, and in the non-commutative case  by \cite[Theorem~0.2]{RX03}, either $X$ is isomorphic to $\ell_2$ or $X$ contains a subspace $Y$ isomorphic to $\ell_p$.
By Corollary~\ref{cor: convextransitive}, in the latter case $Y$ does not admit an equivalent convex transitive norm, which proves the corollary.
\end{proof} 

\begin{cor} \lb{schatten}
Let $X$ be a subspace of the Schatten class $S^p(\ell_2)$, $1<p<\infty$, $p\ne 2$, so that every subspace $Y$ of $X$ admits an equivalent convex transitive norm, then $X$ is isomorphic to $\ell_2$.
\end{cor}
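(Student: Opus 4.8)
The plan is to run the same two-step argument as in Corollary~\ref{pin2infty}: first invoke a structural dichotomy for the subspaces of $S^p(\ell_2)$ asserting that any such subspace is \emph{either} isomorphic to $\ell_2$ \emph{or} contains a subspace isomorphic to $\ell_p$, and then rule out the second alternative by the convex-transitivity obstruction already in hand. In detail, let $X \subseteq S^p(\ell_2)$ be such that every subspace of $X$ admits an equivalent convex transitive norm, and suppose for contradiction that $X$ is not isomorphic to $\ell_2$. The dichotomy then yields a subspace $Y \subseteq X$ with $Y$ isomorphic to $\ell_p$. Since $\ell_p$ is itself a subspace of a quotient space of $\ell_p$, Corollary~\ref{cor: convextransitive} shows that $Y$ admits no equivalent convex transitive norm, contradicting the hypothesis on $X$. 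Hence $X$ is isomorphic to $\ell_2$.

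I would first dispose of the range $2 < p < \infty$ with no extra work. The Schatten class $S^p(\ell_2)$ is the non-commutative $L_p$-space of $B(\ell_2)$ equipped with its canonical trace, so for these values of $p$ the statement is a direct instance of Corollary~\ref{pin2infty}. Consequently the only genuinely new content lies in the range $1 < p < 2$, and the argument above reduces that case entirely to producing the dichotomy there. I would also record at the outset that $S^p(\ell_2)$, hence every subspace of it, is super-reflexive for $1 < p < \infty$, so that convex transitivity and almost transitivity agree on these spaces by \cite{CS98}; this is what licenses the appeal to Corollary~\ref{cor: convextransitive}, which is phrased for convex transitive renormings.

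The main obstacle is the structural dichotomy in the range $1 < p < 2$: that an infinite-dimensional subspace of $S^p(\ell_2)$ containing no isomorphic copy of $\ell_p$ must be isomorphic to $\ell_2$. I would import this from the theory of subspaces of non-commutative $L_p$-spaces and Schatten classes rather than prove it here. The conceptual reason the clean dichotomy is available --- in sharp contrast with the commutative space $L_p[0,1]$ for $1 < p < 2$ --- is the rigidity of the $\ell_r$-subspaces of $S^p(\ell_2)$: the only $\ell_r$ that embed are $\ell_2$, carried by a single row or column, and $\ell_p$, carried by the diagonal, whereas the intermediate spaces $\ell_r$ with $p < r < 2$, which do embed into $L_p[0,1]$ and would break the dichotomy there, do not embed into $S^p(\ell_2)$. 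It is precisely this rigidity that makes the absence of an $\ell_p$-copy strong enough to force a Hilbertian subspace, and it is the one input to the proof that lies outside the asymptotic-structure machinery developed earlier in this section.
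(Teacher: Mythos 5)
Your proposal is correct and takes essentially the same route as the paper: the paper's proof also repeats the two-step argument of Corollary~\ref{pin2infty}, replacing the Raynaud--Xu dichotomy with the subspace dichotomy for Schatten classes, which it imports from Friedman \cite{F75} for the whole range $1<p<\infty$, $p\ne 2$, and then rules out the $\ell_p$-alternative via Corollary~\ref{cor: convextransitive}. The dichotomy you leave as an unnamed literature import is exactly \cite{F75}, and since that single reference covers both ranges, your case split at $p=2$ and the preliminary super-reflexivity remark (already built into Corollary~\ref{cor: convextransitive}) are unnecessary but harmless.
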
 
\begin{proof}
The proof is the same as that of Corollary~\ref{pin2infty}, except that we use 
 \cite{F75} to conclude that either $X$ is isomorphic to $\ell_2$ or $X$ contains a subspace $Y$ isomorphic to $\ell_p$, which, by Corollary~\ref{cor: convextransitive}, gives the conclusion of the corollary.
\end{proof} 

\begin{cor} \lb{pin12}
Let $X$ be a subspace of $L_p[0,1]$, $1 < p<2$, such that every subspace $Y$ of $X$ admits an equivalent convex transitive norm.
Then, for all $1\le r<2$, $X$ is isomorphic to a subspace of $L_r[0,1]$ and
 every subspace of X contains almost isometric copies of $\ell_2$.
\end{cor}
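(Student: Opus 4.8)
The plan is to prove the two assertions separately, in each case reducing to Theorem~\ref{lpandtsirelson} through the stability of $L_p$ and of its subspaces. First I would record the standing reduction: since $1<p<2$, every subspace of $L_p[0,1]$ is super-reflexive, so by \cite{CS98} convex transitivity and almost transitivity coincide on each subspace of $X$. Thus the hypothesis is equivalent to the statement that every subspace $Y$ of $X$ admits an equivalent \emph{almost} transitive norm.

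For the $\ell_2$-saturation assertion I would exploit that $L_p[0,1]$ is a stable Banach space (Krivine--Maurey), so that every subspace of $X$ is stable as well. Fix a subspace $Y\subseteq X$. By the Krivine--Maurey theorem there is a single $r=r(Y)\in[1,\infty)$ such that, for every $\varepsilon>0$, $Y$ contains a subspace $(1+\varepsilon)$-isomorphic to $\ell_r$. Such a subspace is isomorphic to $\ell_r$ and sits inside $X$, so by hypothesis it admits an equivalent almost transitive norm; since this property is an isomorphic invariant and $\ell_r$ is a subspace of itself, Theorem~\ref{lpandtsirelson} forces $r=2$. Hence $Y$ contains almost isometric copies of $\ell_2$, and as $Y$ was arbitrary this is exactly the second assertion.

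For the embedding assertion I would first observe, by the same argument, that $X$ contains no isomorphic copy of $\ell_s$ for any $s\ne2$: any such copy would be a subspace admitting an almost transitive norm, contradicting Theorem~\ref{lpandtsirelson}. In particular $X$ contains no $\ell_p$. I would then invoke the structure theory of subspaces of $L_s$ for $1\le s<2$ (in the spirit of Rosenthal and of Johnson--Maurey--Schechtman--Tzafriri): a subspace of $L_s$ containing no copy of $\ell_s$ embeds isomorphically into $L_{s'}$ for some $s'>s$. Starting from $X\subseteq L_p$ and iterating, using at each stage that $X$ contains no $\ell_s$ with $p\le s<2$, yields an increasing sequence $p=s_0<s_1<s_2<\cdots$ with $X\hookrightarrow L_{s_n}$ for all $n$; because $X$ is $\ell_2$-saturated this cannot stabilize below $2$, so $\sup_n s_n=2$. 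For each fixed $r\in[p,2)$ we then have $s_n>r$ eventually, whence $X\hookrightarrow L_{s_n}\hookrightarrow L_r$, and together with the trivial inclusion $X\subseteq L_p\subseteq L_r$ for $1\le r\le p$ this gives the embedding into $L_r$ for every $1\le r<2$.

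The delicate point is the last step — showing the climb reaches $2$ rather than stalling at some $s^*<2$ (equivalently, that $X$ has type $r$ for every $r<2$). I would handle it by an ultrapower argument: since $X\hookrightarrow L_{s_n}$ with $s_n\uparrow s^*$, an ultraproduct of these embeddings realizes $X$ inside an abstract $L_{s^*}$-space, to which the dichotomy again applies; as $X$ contains no $\ell_{s^*}$, this would push the embedding strictly past $s^*$, contradicting $s^*=\sup_n s_n$. Making this limiting argument precise, together with confirming the exact form of the subspace dichotomy for $L_s$, $1\le s<2$, is the main obstacle, whereas the stability reduction and the two appeals to Theorem~\ref{lpandtsirelson} are routine.
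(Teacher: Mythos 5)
Your reduction via \cite{CS98} (super-reflexivity makes convex transitivity and almost transitivity coincide on every subspace of $X$) is correct and is exactly how the paper passes from the convex transitive hypothesis to the almost transitive one. Your proof of the second assertion is also sound: it is in fact the argument the paper records in Remark~\ref{stable} (stability of $L_p$ and of its subspaces, Krivine--Maurey giving almost isometric copies of some $\ell_{r}$, and Theorem~\ref{lpandtsirelson} forcing $r=2$), whereas the paper's own proof of the corollary obtains this half instead from Aldous's theorem \cite{A}, with \cite{KM} cited for the almost isometric refinement. Either route is fine for that half.

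The first assertion, however, is not proved. The statement you could not pin down is precisely Rosenthal's theorem \cite{R73} in the form the paper uses it: if $X\subseteq L_p$, $1\le p<2$, contains no isomorphic copy of $\ell_s$ for any $s\in[p,2)$, then $X$ embeds into $L_r$ for \emph{every} $r\in[p,2)$ --- the full range of exponents in a single application, so no iteration and no limiting argument are needed. Your substitute --- iterating a one-step dichotomy (``no $\ell_s$ inside a subspace of $L_s$ implies embedding into some $L_{s'}$ with $s'>s$'') and passing to the limit by an ultrapower --- has a genuine hole at exactly the point you flagged. The iteration gives no control on the isomorphism constants of the embeddings $X\hookrightarrow L_{s_n}$, and without uniformly bounded constants the diagonal map of $X$ into the ultraproduct $\prod_{\mathcal{U}}L_{s_n}$ need not be bounded below, so $X$ need not embed into the resulting abstract $L_{s^*}$-space at all. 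There is also a set-up flaw: to reach a contradiction you must work with $s^*=\sup\{r<2 \colon X \text{ embeds into } L_r\}$, not the supremum of one particular iteration sequence, since pushing past the latter contradicts nothing. As written, then, the proposal establishes the $\ell_2$-saturation statement but leaves the embedding statement resting on an unverified form of the dichotomy and an ultrapower step that does not close; the fix is simply to quote Rosenthal's theorem in its actual strength, as the paper does.
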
 
\begin{proof} By Corollary~\ref{cor: convextransitive}, $X$ does not contain a copy of $\ell_r$ for any $1\le r<2$. Thus, by  a theorem of Rosenthal \cite{R73}, $X$ is contained in $L_r[0,1]$ for all $p\le r<2$. The latter implies by a theorem of Aldous \cite{A} that every subspace of $X$ contains isomorphic (even almost isometric \cite{KM}) copies of $\ell_2$.
 \end{proof} 

\begin{remark}\lb{stable}
The last statement of Corollary~\ref{pin12} generalizes to the class of \textit{stable} spaces introduced in \cite{KM}.  If $X$ is stable then every subspace of $X$ contains almost isometric copies of $\ell_p$ for some $1 \le p < \infty$ \cite{KM}. Thus, if, in addition, every subspace of $X$ admits an equivalent convex transitive norm, then by Theorem~\ref{lpandtsirelson}
every subspace of $X$ contains almost isometric copies of $\ell_2$. \end{remark}

\begin{remark}\lb{smooth} 
If $X$  is stable,  
 $X$ admits a $C^2$-smooth  bump, and every subspace $Y$ of $X$ admits an equivalent convex transitive norm, then $X$ is isomorphic to a Hilbert space. 

This follows from
the last statement of Corollary~\ref{pin12} and Remark~\ref{stable}, since by \cite[Corollary V.5.2]{DGZ} if $X$ is an infinite dimensional Banach space $X$ which admits a $C^2$-smooth  bump and is saturated with subspaces of cotype 2,  then $X$ is isomorphic to a Hilbert space. 
\end{remark}

\begin{remark}\lb{noncom}
We do not know whether  Corollary~\ref{pin12}  generalizes to the setting of non-commutative $L_p$-spaces. 
 Rosenthal's theorem used in the proof of Corollary~\ref{pin12}
generalizes to non-commutative $L_p$-spaces,  \cite{JP08} and \cite{R08}. However  non-commutative $L_p$-spaces  are not stable in general \cite{M97}. As far as we know,  it is not known whether Aldous's theorem can be generalized to the non-commutative setting.
\end{remark}

\section{Maximal bounded subgroups of the isomorphism group}

As noted in the Introduction, maximal isometry groups of equivalent renormings of a Banach space $X$ are exactly maximal bounded subgroups of the group $\GL(X)$ of isomorphisms from $X$ onto $X$. Thus all results in this and the next section can be stated equivalently in the terminology of  bounded subgroups of  $\GL(X)$. We choose the terminology of isometry groups of renormings of $X$ since our arguments rely heavily on  Rosenthal's characterization of isometry groups of a general class of Banach spaces, which we recall below.

We will need the following definitions and results from \cite{R86}.

 A Banach space $X$ with a normalized 1-unconditional basis $\{e_\g\}_{\g\in \G}$ is called {\it impure} if there exist $\al\ne\be$ in $\G$ so that $(e_\al,e_\be)$ is isometrically equivalent to the usual basis of 2-dimensional $\ell_2^2$ and for all $x, x'\in \Span(e_\al,e_\be)$ with $\|x\|=\|x'\|$ and for all $y\in\Span\{e_\g:\g\ne\al,\be\}$ we have  $\|x+y\|=\|x'+y\|$. Otherwise the space $X$ is called {\it pure} (cf. \cite[Corollary~3.4]{R86}). For convenience, we will also say that $\{e_\g\}_{\g\in \G}$ is pure (resp.\ impure) if $(X, \{e_\g\}_{\g\in \G})$ is pure (resp.\ impure).

\begin{defn} (\cite[p. 430 and Proposition~1.11]{R86}) Let $X$ be a Banach space and $Y$ be a subspace of $X$. $Y$ is said to be {\it well-embedded in $X$} if there exists a subspace $Z$ of $X$ so that $X=Y+Z$ and for all $y, y'\in Y, z\in Z$, if $\|y\|=\|y'\|$ then $\|y+z\|=\|y'+z\|$.

$Y$ is called a {\it well-embedded Hilbert space} if $Y$ is well-embedded and Euclidean.
$Y$ is called a {\it Hilbert component of X} if $Y$ is a maximal well-embedded Hilbert subspace (a similar concept in complex Banach spaces was introduced by Kalton and Wood \cite{KW76}).

If $X$  is space with a 1-unconditional basis  $E=\{e_\g\}_{\g\in \G}$ and $(H_\g)_{\g\in \G}$ are Hilbert spaces all of dimension at least 2, then $Z=(\sum_\G\oplus H_\g)_E$ is called a \textit{functional hilbertian sum} \cite{R86}. 
\end{defn}

Rosenthal \cite{R86} proved the following useful fact:
\begin{theorem}  (\cite[Theorem~1.12]{R86})\label{comp}
If $Y$ is a well-embedded Hilbert sub\-space of $X$, then
there exists a Hilbert component of $X$ containing $Y$.
\end{theorem}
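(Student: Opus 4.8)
The plan is to prove Theorem~\ref{comp} by a Zorn's lemma argument combined with a closure/limiting step, showing that the union of an increasing chain of well-embedded Hilbert subspaces containing $Y$ is again a well-embedded Hilbert subspace, and that such a maximal one exists. First I would set up the partially ordered set $\mathcal{F}$ of all well-embedded Hilbert subspaces $W$ of $X$ with $Y \subseteq W$, ordered by inclusion. This set is nonempty since $Y$ itself belongs to it. The goal is to verify the hypotheses of Zorn's lemma, namely that every chain in $\mathcal{F}$ has an upper bound in $\mathcal{F}$, and then a maximal element of $\mathcal{F}$ is by definition a Hilbert component containing $Y$.

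Given a chain $(W_i)_{i \in I}$ in $\mathcal{F}$, the natural candidate for an upper bound is $W := \overline{\bigcup_{i} W_i}$. I would first check that $W$ is Euclidean: the union $\bigcup_i W_i$ is an increasing union of inner-product subspaces whose norms all agree (they are all restrictions of $\|\cdot\|$), so the parallelogram law passes to the union and hence to its closure, making $W$ a Hilbert space in the induced norm. The more delicate point is to produce a single complementary subspace $Z$ witnessing that $W$ is well-embedded. For each $W_i$ there is a $Z_i$ with $X = W_i + Z_i$ and the norm-invariance property; the difficulty is that these $Z_i$ need not be nested or compatible. The key step, and the one I expect to be the main obstacle, is to extract from Rosenthal's structural results \cite{R86} a canonical choice of complement—for instance via the averaging/projection associated to the isometries fixing the relevant coordinates—so that the complements can be taken coherently along the chain, yielding $Z = \bigcap_i Z_i$ (or the closed span of a compatible family) with $X = W + Z$.

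Once a coherent complement $Z$ is in hand, I would verify the defining property of well-embeddedness for $W$ directly: given $w, w' \in W$ with $\|w\| = \|w'\|$ and $z \in Z$, one wants $\|w + z\| = \|w' + z\|$. By density it suffices to treat $w, w'$ lying in some $W_i$ and $z$ in the corresponding complement, where the property holds by assumption for that index; a continuity argument then extends it to the closure. This shows $W \in \mathcal{F}$ and $W \supseteq W_i$ for all $i$, so $W$ is the required upper bound. Applying Zorn's lemma produces a maximal well-embedded Hilbert subspace containing $Y$, which is precisely a Hilbert component of $X$ containing $Y$, completing the proof.

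Alternatively, if the coherent-complement extraction proves too delicate to carry out directly, I would fall back on invoking the explicit description of Hilbert components in \cite[Proposition~1.11]{R86}: one identifies the maximal well-embedded Euclidean subspace through $Y$ by grouping together all basis directions (or, in the coordinate-free setting, all one-dimensional pieces) that interact with $Y$ in the isometrically-$\ell_2$ manner encoded in the impurity condition, and then shows this maximal grouping is itself well-embedded. This transfers the burden of the limiting argument onto Rosenthal's already-established structural dichotomy between pure and impure configurations, and the main work again reduces to checking that the resulting subspace admits a well-embedding complement.
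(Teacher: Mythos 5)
A point of calibration first: the paper does not prove Theorem~\ref{comp} at all --- it is imported verbatim from Rosenthal \cite[Theorem~1.12]{R86} --- so there is no internal proof to compare yours against; your attempt has to be judged on its own terms. The Zorn skeleton is sensible as far as it goes: $\mathcal{F}$ is nonempty, a maximal element of $\mathcal{F}$ is indeed a Hilbert component (any well-embedded Hilbert subspace containing it contains $Y$, hence lies in $\mathcal{F}$, hence equals it), and the closure of a chain union is Euclidean by the parallelogram identity plus continuity. Even the invariance property passes to the closure by the density argument you sketch (after rescaling the approximants so that their norms match).

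The genuine gap is exactly the step you flag and then defer: producing a complement $Z$ witnessing that $W=\overline{\bigcup_i W_i}$ is well-embedded. The witnesses $Z_i$ are in no way canonical --- nothing in the definition makes them unique, nested, or compatible along the chain --- so your candidate $Z=\bigcap_i Z_i$ has no reason to satisfy $X=W+Z$; with unlucky choices of the $Z_i$ the intersection can be far too small to complement anything. Your proposed repair, ``extract from Rosenthal's structural results a canonical choice of complement,'' is not an argument: it names no specific result, and the structural machinery of \cite{R86} that this paper actually uses (purity, functional hilbertian sums, the isometry classification of Theorem~\ref{FHS}) sits downstream of Theorem~1.12 in Rosenthal's development, so invoking it here courts circularity. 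The fallback paragraph has the same defect in a different form: it presupposes that $X$ carries a $1$-unconditional basis and argues via the impurity condition, whereas Theorem~\ref{comp} is stated (and is needed, in Rosenthal's generality) for an arbitrary Banach space $X$ with no basis hypothesis. In short, your outline correctly isolates where the difficulty lies, but that difficulty is essentially the whole theorem --- the hard content is precisely that well-embedded Hilbert subspaces cohere (e.g., that spans of overlapping ones remain well-embedded Hilbertian), and nothing in the proposal establishes it --- so as written this is a plan, not a proof.
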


 The main result of  \cite{R86} which we will use extensively is the following:

\begin{theorem} (\cite[Theorem~3.12]{R86})\label{FHS}
Let $X$  be a pure space with a 1-unconditional basis  $E=\{e_\g\}_{\g\in \G}$ and $(H_\g)_{\g\in \G}$ be Hilbert spaces all of dimension at least 2, and let $Z=(\sum_\G\oplus H_\g)_E$ be the corresponding functional hilbertian sum. Let $P(Z)$ denote the set of all bijections $\s:\G\to\G$ so that
\begin{enumerate}
\item[(a)] $\{e_{\s(\g)}\}_{\g\in \G}$ is isometrically equivalent to $\{e_\g\}_{\g\in \G}$, and
\item[(b)] $H_{\s(\g)}$ is isometric to $H_\g$ for all $\g\in \G$.
\end{enumerate}

Then $T:Z\to Z$ is a surjective isometry if and only if there exist $\s\in P(Z)$ and surjective linear isometries $T_\g: H_\g\to H_{\s(\g)}$, for all $\g\in \G$,  so that for all $z=(z_\g)_{\g\in \G}$ in $Z$, and for all $\g\in \G$, 
\begin{equation}\lb{isofiber}
(Tz)_{\s(\g)}=T_\g(z_\g).
\end{equation}  
In particular, if $T\in\Isom(Z)$ and $H$ is a Hilbert component of $Z$, then $T(H)$ is a Hilbert component of $Z$.
\end{theorem}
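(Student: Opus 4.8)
The plan is to prove the two implications separately, with the substance lying entirely in the forward (``only if'') direction. For the easy ``if'' direction, recall that for $z=(z_\gamma)_{\gamma\in\Gamma}\in Z$ the functional hilbertian norm depends only on the scalar sequence of fibre norms, namely $\|z\|_Z=\big\|\sum_\gamma \|z_\gamma\|_{H_\gamma}\, e_\gamma\big\|_E$. Hence if $T$ is given by $(Tz)_{\sigma(\gamma)}=T_\gamma(z_\gamma)$ with $\sigma\in P(Z)$ and each $T_\gamma\colon H_\gamma\to H_{\sigma(\gamma)}$ a surjective linear isometry, then $\|(Tz)_{\sigma(\gamma)}\|=\|z_\gamma\|$ for every $\gamma$, so $\|Tz\|_Z=\big\|\sum_\gamma\|z_\gamma\|e_{\sigma(\gamma)}\big\|_E=\big\|\sum_\gamma\|z_\gamma\|e_\gamma\big\|_E=\|z\|_Z$, the middle equality being condition (a). Bijectivity of $\sigma$ and surjectivity of the $T_\gamma$ make $T$ onto, so $T$ is a surjective isometry.

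For the ``only if'' direction the key reduction is to identify the Hilbert components of $Z$. I claim that, because $E$ is \emph{pure}, the Hilbert components of $Z$ are exactly the fibres $\{H_\gamma:\gamma\in\Gamma\}$, viewed as the subspaces of $Z$ supported on a single coordinate (here the hypothesis $\dim H_\gamma\ge 2$ ensures these are genuine, at least two-dimensional, Euclidean subspaces). That each $H_\gamma$ is a well-embedded Hilbert subspace is immediate: taking $Z_\gamma=(\sum_{\gamma'\ne\gamma}\oplus H_{\gamma'})_E$ gives $Z=H_\gamma+Z_\gamma$, and the displayed norm formula shows $\|y+z\|_Z$ depends on $y\in H_\gamma$ only through $\|y\|$, which is precisely well-embeddedness; moreover $H_\gamma$ is Euclidean by hypothesis. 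By Theorem~\ref{comp} each $H_\gamma$ lies in some Hilbert component $H$, and the crux is to show $H=H_\gamma$. Here purity enters: if $H$ strictly contained $H_\gamma$ it would be a well-embedded Euclidean subspace involving a second coordinate, and unravelling the well-embedding complement together with the Euclidean (rotational) structure of $H$ would produce a pair $\alpha\ne\beta$ for which $(e_\alpha,e_\beta)$ is isometrically $\ell_2^2$ and satisfies the insensitivity condition in the definition of impurity, contradicting that $E$ is pure.

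Granting this classification, the remainder is routine. A surjective isometry $T$ carries well-embedded Euclidean subspaces to well-embedded Euclidean subspaces (it preserves the relation $\|y\|=\|y'\|$, transports a splitting $Z=Y+Z'$ to $Z=T(Y)+T(Z')$, and, applied together with $T^{-1}$, preserves maximality), so $T$ permutes Hilbert components; this already yields the ``in particular'' clause. Combined with the classification, $T$ induces a bijection $\sigma\colon\Gamma\to\Gamma$ with $T(H_\gamma)=H_{\sigma(\gamma)}$, and $T_\gamma:=T|_{H_\gamma}\colon H_\gamma\to H_{\sigma(\gamma)}$ is a surjective linear isometry, giving condition (b). Expanding a finitely supported $z$ as $\sum_\gamma z_\gamma$ with $z_\gamma\in H_\gamma$ and using linearity yields $(Tz)_{\sigma(\gamma)}=T_\gamma(z_\gamma)$, which extends to all of $Z$ by density and continuity. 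Finally, applying the norm formula to $Tz$ and using $\|(Tz)_{\sigma(\gamma)}\|=\|z_\gamma\|$ forces $\big\|\sum_\gamma\|z_\gamma\|e_{\sigma(\gamma)}\big\|_E=\big\|\sum_\gamma\|z_\gamma\|e_\gamma\big\|_E$; since the fibre norms $\|z_\gamma\|$ range over all nonnegative sequences as $z$ varies, this is exactly condition (a), so $\sigma\in P(Z)$.

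I expect the main obstacle to be the maximality step, i.e.\ verifying that purity forbids a Hilbert component from spilling over into a second fibre. This is the one place that genuinely uses Rosenthal's structural analysis of well-embedded subspaces (Proposition~1.11 and Corollary~3.4 of \cite{R86}): one must show that any Euclidean, well-embedded subspace meeting two coordinates forces the precise two-dimensional $\ell_2^2$-with-insensitivity pattern defining impurity, and controlling the well-embedding complement while extracting exactly that pattern is the delicate part of the argument.
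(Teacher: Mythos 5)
The first thing to note is that the paper itself contains no proof of this statement: Theorem~\ref{FHS} is quoted from Rosenthal \cite[Theorem~3.12]{R86}, so the only meaningful comparison is with Rosenthal's argument, whose architecture your proposal does in fact mirror. Your ``if'' direction is correct, and so is the routine bookkeeping in the ``only if'' direction: surjective isometries carry well-embedded Hilbert subspaces to well-embedded Hilbert subspaces and preserve maximality, hence permute Hilbert components (which yields the ``in particular'' clause); and once one knows $T(H_\gamma)=H_{\sigma(\gamma)}$ for a bijection $\sigma$ of $\Gamma$, the extraction of the $T_\gamma$, the identity \eqref{isofiber} (first on finitely supported vectors, then by density), and condition (a) (via $\|z\|_Z=\bigl\|\sum_\gamma\|z_\gamma\|_{H_\gamma}e_\gamma\bigr\|_E$ together with 1-unconditionality to pass from nonnegative to arbitrary coefficients) all go through as you describe.

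The genuine gap is precisely the step you yourself flag as ``the main obstacle'': that purity of $E$ forces the Hilbert components of $Z$ to be the fibres $H_\gamma$. This is not a technical detail to be deferred; it is essentially the entire mathematical content of the theorem, and your proposal does not prove it — it only asserts that ``unravelling the well-embedding complement together with the Euclidean structure'' would produce an impure pair $(e_\alpha,e_\beta)$. Making that precise is what occupies Rosenthal's structural analysis (his Proposition~1.11, Theorem~1.12, Corollary~3.4 and the lemmas leading to Theorem~3.12): one must control how an arbitrary well-embedded Euclidean subspace sits relative to the coordinate decomposition, and this takes several pages, not a routine verification. Moreover, even granting your sketch, the classification is only half established: showing that the component containing $H_\gamma$ equals $H_\gamma$ proves that every fibre \emph{is} a component (i.e.\ fibres are maximal), but not that every component is a fibre. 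Your isometry argument needs the latter, or at least that $T(H_\gamma)$ — which you know is a component — is again a fibre; without it the bijection $\sigma\colon\Gamma\to\Gamma$ is simply not defined. So as written, the proposal is a correct reduction of Rosenthal's theorem to Rosenthal's key structural lemma, rather than a proof of the theorem.
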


Theorem~\ref{FHS} is valid for both real and complex spaces. For separable complex Banach spaces it was proved earlier  by Fleming and Jamison \cite{FJ74}, cf. also \cite{KW76}.

As a consequence of \cite[Theorem~2]{R86} we obtain a condition when maximal isometry groups of
functional  hilbertian sums  are conjugate to each other.

\begin{prop} \label{prop: conjugate}
 Suppose $(Z, \|\cdot\|)$ has two renormings $\|\cdot\|_1$ and $\|\cdot\|_2$ such that $(Z, \|\cdot\|_1)$ is isometric to a functional hilbertian sum, $Z_1 = (\sum_{\Gamma_1} \oplus H_{\gamma})_{E_1}$, and $(Z, \|\cdot\|_2)$ is isometric to a functional hilbertian sum, $Z_2 = (\sum_{\Gamma_2} \oplus H_{\gamma})_{E_2}$, where $E_1$ and $E_2$ are pure.  Suppose $G_1 := \operatorname{Isom}(Z, \|\cdot\|_1)$  and $G_2 := \operatorname{Isom}(Z, \|\cdot\|_2)$ are conjugate in the isomorphism group of $(Z, \| \cdot \|)$ and are maximal. Then there exists a bijection $\rho \colon \Gamma_1 \rightarrow \Gamma_2$ such that $H_\gamma$ is isometric to $H_{\rho(\gamma)}$ for all $\gamma \in \Gamma_1$.\end{prop}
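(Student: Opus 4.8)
The plan is to exploit the conjugacy in order to transport the \emph{entire} functional hilbertian structure of $Z_1$ onto the second norm, and then to recognize the fibers as intrinsic invariants of the common isometry group. Let $V \in \GL(Z,\|\cdot\|)$ be an isomorphism witnessing the conjugacy, so that $G_2 = V G_1 V^{-1}$. First I would push the norm $\|\cdot\|_1$ forward through $V$ by setting $\|z\|_1' := \|V^{-1}z\|_1$. Then $V \colon (Z,\|\cdot\|_1) \to (Z,\|\cdot\|_1')$ is a surjective isometry, so $(Z, \|\cdot\|_1')$ is again isometric to the pure functional hilbertian sum $Z_1$, and a direct computation gives $\Isom(Z,\|\cdot\|_1') = V\,\Isom(Z,\|\cdot\|_1)\,V^{-1} = V G_1 V^{-1} = G_2 = \Isom(Z,\|\cdot\|_2)$. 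Moreover, since $V$ is an isometry onto $(Z,\|\cdot\|_1')$ and the $H_\gamma$ are (the images in $Z$ of) the fibers of $Z_1$, each image $V(H_\gamma)$ is a subspace on which $\|\cdot\|_1'$ is Euclidean and which is isometric to $H_\gamma$.

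Next I would identify the Hilbert components of a pure functional hilbertian sum. Each fiber is a well-embedded Euclidean subspace, since the norm of a sum depends on the fiber coordinate only through its length via the $1$-unconditional basis. By Theorem~\ref{comp} every fiber is contained in a Hilbert component, and purity of the basis forbids two distinct fibers from being amalgamated into a single larger well-embedded Euclidean subspace (such an amalgamation would exhibit an impure pair). Hence the Hilbert components of $(Z,\|\cdot\|_1')$ are exactly the subspaces $V(H_\gamma)$, $\gamma \in \Gamma_1$, while those of $(Z,\|\cdot\|_2)$ are exactly the fibers $H_{\gamma'}$, $\gamma' \in \Gamma_2$, of $Z_2$. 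Theorem~\ref{FHS} moreover guarantees, in each decomposition, the presence in $G_2$ of the subgroup acting as the full unitary (orthogonal) group on one prescribed fiber and trivially on all the others.

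The heart of the argument, and its expected main obstacle, is to show that this fiber decomposition is determined by $G_2$ alone rather than by the particular norm. Here I would invoke \cite[Theorem~2]{R86}, using that $G_2$ is maximal: for a pure functional hilbertian sum the collection of Hilbert components is an invariant of the isometry group, each fiber being recoverable as the (purely linear, hence norm-independent) moving subspace $\overline{\Span}\{(g-I)z : z \in Z\}$ of the intrinsic subgroup of $G_2$ described above. Since $\|\cdot\|_1'$ and $\|\cdot\|_2$ have the one and the same isometry group $G_2$, their families of Hilbert components therefore coincide \emph{as subspaces of $Z$}, i.e. $\{V(H_\gamma)\}_{\gamma\in\Gamma_1} = \{H_{\gamma'}\}_{\gamma'\in\Gamma_2}$. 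The delicate point — where purity and maximality are both essential — is precisely this group-intrinsic characterization; without purity the fibers could merge, and without maximality the decomposition need not be pinned down by the group.

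Finally I would read off the conclusion. The coincidence of the two families yields a bijection $\rho \colon \Gamma_1 \to \Gamma_2$ with $V(H_\gamma) = H_{\rho(\gamma)}$ as subspaces of $Z$. Being the same subspace, $V(H_\gamma)$ and $H_{\rho(\gamma)}$ have equal linear dimension; since the isometry class of a Euclidean space is determined by its dimension, and since $H_\gamma$ is isometric to $V(H_\gamma)$ by the first step, it follows that $H_\gamma$ is isometric to $H_{\rho(\gamma)}$ for every $\gamma \in \Gamma_1$, which is the desired assertion.
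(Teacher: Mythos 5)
Your reduction is fine up to a point: transporting $\|\cdot\|_1$ to $\|\cdot\|_1'=\|V^{-1}\cdot\|_1$, getting $\Isom(Z,\|\cdot\|_1')=VG_1V^{-1}=G_2=\Isom(Z,\|\cdot\|_2)$, and identifying the Hilbert components of each of the two norms \emph{separately} (components of $\|\cdot\|_1'$ are the $V(H_\gamma)$, components of $\|\cdot\|_2$ are the fibers of $Z_2$) are all correct. The gap is at the step you yourself flag as the heart of the argument: the assertion that for a pure functional hilbertian sum the family of Hilbert components is an invariant of the isometry group alone, so that the two families must coincide as subspaces of $Z$. That assertion is precisely what has to be proved, and your justification does not prove it. Rosenthal's \cite[Theorem~2]{R86} is a uniqueness theorem for pure functional hilbertian sum representations of \emph{one and the same} normed space (equivalently, of isometric spaces); it says nothing about two different norms that merely share an isometry group. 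And your mechanism of recovering each fiber as the moving subspace $\overline{\Span}\{(g-I)z : z\in Z\}$ of ``the intrinsic subgroup of $G_2$'' is circular: the subgroup acting as the full orthogonal group on one fiber and trivially on the others is defined \emph{in terms of the decomposition} whose group-invariance you are trying to establish, and you give no norm-independent characterization of these subgroups inside $G_2$. (The role you assign to maximality here is also spurious; see below.)

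The missing idea is a cross-norm well-embedding argument, of the kind the paper uses in Theorem~\ref{nmax}. Fix $\gamma\in\Gamma_1$ and let $W$ be the closed span of the other transported fibers. Every operator acting as an orthogonal transformation on $V(H_\gamma)$ and as the identity on $W$ lies in $\Isom(Z,\|\cdot\|_1')=G_2=\Isom(Z,\|\cdot\|_2)$; since the orthogonal group acts transitively on spheres, $\|\cdot\|_2$ restricted to $V(H_\gamma)$ is a multiple of the Euclidean norm and $V(H_\gamma)$ is a well-embedded Hilbert subspace of $(Z,\|\cdot\|_2)$. By Theorem~\ref{comp} and purity of $E_2$, $V(H_\gamma)$ is contained in some fiber $H_{\gamma'}$ of $Z_2$; symmetrically every fiber of $Z_2$ is contained in some $V(H_\gamma)$, and since distinct fibers meet only at $0$ the two families coincide, which yields $\rho$ and (by your dimension argument) the isometry of fibers. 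Note that this repaired argument never uses maximality of $G_1$, $G_2$ — once both norms are pure and share the group, purity does all the work. The paper's actual proof is structured differently and \emph{does} use maximality: it forms the single auxiliary norm $\|z\|_3:=\sup_{g\in G_1}\|g(z)\|$, uses maximality of $G_1$ and $G_2$ to show that $(Z,\|\cdot\|_3)$ is isometric to \emph{pure} functional hilbertian sums over both $\Gamma_1$ and $\Gamma_2$, and only then applies \cite[Theorem~2]{R86} to this one space — which is the legitimate use of the uniqueness theorem you attempted to invoke across two different norms.
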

\begin{proof}  Let $G_1 = T^{-1}G_2T$ for some isomorphism $T$ of $(Z, \|\cdot\|)$. Define 
$$ \|z\|_3 := \sup_{g \in G_1} \|g(z)\| \qquad (z \in Z).$$
Clearly, $\|\cdot\|_3$ is $G_1$-invariant and is equivalent to $\|\cdot\|$. Since $(Z,\|\cdot\|_1)$ is isometric to the functional hilbertian sum $Z_1$, and since $G_1$ is its isometry group, it follows that $(Z, \| \cdot \|_3)$ is isometric to a functional hilbertian sum $Y_1 = (\sum_{\Gamma_1} \oplus H_{\gamma})_{F_1}$.
Moreover, by maximality of $G_1$, we have that $G_1 =  \operatorname{Isom}(Z, \|\cdot\|_3)$. In particular, since $E_1$ is pure, it follows that $F_1$ is also pure, for otherwise the isometry group of $(Z, \| \cdot \|_3)$ would strictly contain $G_1$. On the other hand,
$$\|z\|_3 = \sup_{g \in G_2} \|T^{-1}gT(z)\|,$$ 
which implies likewise that $(Z, \|\cdot\|_3)$ is isometric to a functional Hilbert sum $Y_2 = (\sum_{\Gamma_2} \oplus H_{\gamma})_{F_2}$, where $F_2$ is pure.
The existence of the bijection $\rho$ now follows from a uniqueness theorem of Rosenthal for pure functional  hilbertian sums \cite[Theorem~2]{R86}.
\end{proof}

We are now ready to describe a countable number of  different equivalent maximal norms on  Banach spaces with 1-symmetric bases, which are not isomorphic to $\ell_2$. Henceforth we shall say that a basis $E$ for a Banach space $X$ is \textit{non-hilbertian} if $X$ is not isomorphic to a Hilbert space.

\begin{theorem} \lb{nmax}
Let $X$ be a pure Banach space with a non-hilbertian  1-symmetric  basis $E=\{e_k\}_{k=1}^\infty$, 
let $n\in\bbN, n\ge 2$, and $Z_n=Z_n(X)=(\sum_{k=1}^\infty \oplus H_k)_E$, where, for all $k\in\bbN$, $H_k$ is isometric to $\ell_2^n$.
 Then  $Z_n$ is  isomorphic to $X$ and the isometry group of $Z_n$ is maximal. 

Moreover,  if $n \ne m$ then $\Isom(Z_n)$  and $\Isom(Z_m)$ are not conjugate to each other in the isomorphism group of $X$.
\end{theorem}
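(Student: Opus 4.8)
The plan is to prove Theorem~\ref{nmax} in three stages: first that $Z_n$ is isomorphic to $X$, then that $\Isom(Z_n)$ is maximal, and finally the non-conjugacy statement, which is the real content. For the isomorphism, since $E$ is $1$-symmetric and each $H_k \cong \ell_2^n$ has \emph{fixed finite} dimension $n$, the sum $Z_n = (\sum_k \oplus H_k)_E$ is built from uniformly finite-dimensional blocks; grouping each $H_k$ into $n$ coordinates of a copy of $E$ and using that $E$ is symmetric (so permuting and regrouping basis vectors gives an equivalent norm) shows $Z_n$ is isomorphic to $(\sum_k \oplus \ell_2^n)_E$, which in turn is isomorphic to $X$ by a standard blocking argument exploiting symmetry of $E$. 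For maximality of $\Isom(Z_n)$, I would invoke the structure theorem: since $E$ is pure and non-hilbertian and all fibers $H_k$ have dimension $n \ge 2$, Theorem~\ref{FHS} describes $\Isom(Z_n)$ completely as fiber-isometries composed with basis-permutations in $P(Z_n)$. I expect maximality to follow because any strictly larger bounded subgroup would have to mix the Hilbert components in a way incompatible with the purity of $E$; one checks that the $G$-invariant norm $\|z\|_3 = \sup_{g}\|gz\|$ associated to a putative larger group again presents $Z_n$ as a pure functional hilbertian sum, forcing equality of the groups.

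The heart of the theorem, and the main obstacle, is the non-conjugacy of $\Isom(Z_n)$ and $\Isom(Z_m)$ for $n \ne m$. Here the natural strategy is to argue by contradiction and apply Proposition~\ref{prop: conjugate}. Suppose $G_n := \Isom(Z_n)$ and $G_m := \Isom(Z_m)$ were conjugate in $\GL(X)$. Both are maximal (by the first part) and both present $X$ as a pure functional hilbertian sum, $Z_n = (\sum_{k} \oplus H_k)_E$ with $H_k \cong \ell_2^n$ and $Z_m = (\sum_{j} \oplus H'_j)_E$ with $H'_j \cong \ell_2^m$. Proposition~\ref{prop: conjugate} then supplies a bijection $\rho$ between the index sets such that $H_k$ is isometric to $H'_{\rho(k)}$ for every $k$. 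But $H_k \cong \ell_2^n$ and $H'_{\rho(k)} \cong \ell_2^m$ can be isometric only if $n = m$, since isometric Hilbert spaces have equal dimension. This contradiction establishes the claim.

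The step I expect to require the most care is verifying the hypotheses of Proposition~\ref{prop: conjugate} rather than its application: one must confirm that each presentation uses a \emph{pure} $1$-unconditional basis (which is given for $E$, but one should check that the ambient unconditional basis of the functional hilbertian sum $Z_n$ inherits purity, so that Rosenthal's uniqueness theorem \cite[Theorem~2]{R86} applies) and that the fibers indeed have dimension $\ge 2$, which holds since $n, m \ge 2$. A subtle point is that conjugacy is assumed in the isomorphism group of $X$ under its \emph{original} norm, and Proposition~\ref{prop: conjugate} is stated precisely for this situation, so the matching is clean once maximality and purity are in hand.

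I would therefore organize the write-up so that the isomorphism $Z_n \cong X$ and the maximality of $\Isom(Z_n)$ are dispatched first using the symmetry of $E$ and Theorem~\ref{FHS}, and then the non-conjugacy follows almost formally from Proposition~\ref{prop: conjugate} together with the elementary observation that $\ell_2^n$ and $\ell_2^m$ are isometric if and only if $n = m$. The only genuine work lies in the maximality argument, where purity of $E$ must be leveraged to rule out any proper enlargement of the isometry group.
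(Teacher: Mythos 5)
Your first and third parts track the paper's own proof: the isomorphism $Z_n\cong X$ is obtained by essentially the same blocking/symmetry observation (the paper notes that $Z_n$ is isomorphic to the direct sum of $n$ copies of $X$, hence to $X$), and the non-conjugacy for $n\ne m$ is deduced, exactly as in the paper, by feeding maximality and purity into Proposition~\ref{prop: conjugate} and observing that $\ell_2^n$ and $\ell_2^m$ are isometric only if $n=m$. Those parts are fine.

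The genuine gap is in the maximality argument, which you yourself flag as ``the only genuine work'' but then dispatch with an assertion rather than a proof. The claim that the invariant norm ``again presents $Z_n$ as a pure functional hilbertian sum, forcing equality of the groups'' forces nothing: a renorming $\widetilde{Z}_n$ with $\Isom(\widetilde{Z}_n)\supseteq\Isom(Z_n)$ could a priori be a pure functional hilbertian sum over \emph{coarser} fibers --- for instance one in which $H_1$ and $H_2$ are glued into a single Euclidean fiber $(H_1\oplus H_2)_2$ --- and then $\Isom(\widetilde{Z}_n)$ would strictly contain $\Isom(Z_n)$ (it would contain rotations mixing $H_1$ and $H_2$) while still being the isometry group of a pure functional hilbertian sum. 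Purity of $E$ does not exclude this: purity is a property of the \emph{original} norm, and the merged presentation lives over a different base, so your proposed mechanism (``incompatible with the purity of $E$'') is not the right one. What the paper actually does is: (i) since the fiberwise rotations and sign changes lie in $\Isom(Z_n)\subseteq\Isom(\widetilde{Z}_n)$, each $H_k$ is well-embedded in $\widetilde{Z}_n$, so by Theorem~\ref{comp} each $H_k$ sits inside a Hilbert component of $\widetilde{Z}_n$; hence the components of $\widetilde{Z}_n$ are $\widetilde{H}_j=\bigl(\sum_{k\in A_j}\oplus H_k\bigr)_2$ for some partition $\{A_j\}_{j\in J}$ of $\bbN$; and (ii) merging is ruled out by a permutation trick: because $E$ is $1$-symmetric and the fibers mutually isometric, $P(Z_n)$ consists of \emph{all} bijections of $\bbN$, so if some $\widetilde{H}_{j_0}$ contained two fibers $H_{k_0},H_{k_0'}$ one could choose $T\in\Isom(Z_n)$ of the form \eqref{isofiber} sending them into two \emph{distinct} components $\widetilde{H}_{j_1},\widetilde{H}_{j_2}$; then $T(\widetilde{H}_{j_0})$ meets two different components, contradicting the last assertion of Theorem~\ref{FHS} that isometries map Hilbert components to Hilbert components. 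Only after this does one get that the components of $\widetilde{Z}_n$ are exactly the $H_k$, whence $P(\widetilde{Z}_n)\subseteq P(Z_n)$ and $\Isom(\widetilde{Z}_n)\subseteq\Isom(Z_n)$. This transitivity-plus-component-preservation argument is the missing idea; without it your maximality step --- and therefore the hypotheses you need in order to invoke Proposition~\ref{prop: conjugate} for the non-conjugacy claim --- is unsupported.
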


\begin{proof}
It is easily seen that $Z_n$ is isomorphic to the direct sum of $n$ copies of $X$ and hence isomorphic to $X$ itself since $X$  has a symmetric basis.

By Theorem~\ref{FHS}, all isometries of $Z_n$ have form \eqref{isofiber}, and, since the basis is 1-symmetric and all $H_k$ are isometric to each other, the set $P(Z_n)$ is equal to the set of all bijections of $\bbN$.

Suppose that $Z_n$ has a renorming $\widetilde{Z}_n=(Z_n, |\!|\!|\cdot|\!|\!|)$ so that $\Isom(\widetilde{Z}_n)\supseteq\Isom(Z_n)$. Then the 1-unconditional basis of $Z_n$ is also 1-unconditional in $\widetilde{Z}_n$, and for all $k\in \bbN$ the subspace $H_k$ is well-complemented in $\widetilde{Z}_n$. By Theorem~\ref{comp}, for each $k\in\bbN$, there exists a Hilbert component of $\widetilde{Z}_n$ containing $H_k$. Thus every  Hilbert component of $\widetilde{Z}_n$ has dimension greater than or equal to $n$, and $\bbN$ can be split into disjoint subsets $\{A_j\}_{j\in J}$, so that every  Hilbert component of $\widetilde{Z}_n$ is given by
$$\widetilde{H}_j=\left (\sum_{k\in A_j} \oplus H_k\right)_2,$$
and
$$\widetilde{Z}_n= \left(\sum_{j\in J} \oplus \widetilde{H}_j\right)_{\{\widetilde{e}_j\}_{j\in J}},$$
for some 1-unconditional basis $\{\widetilde{e}_j\}_{j\in J}$. Since $Z_n$, and thus also $\widetilde{Z}_n$, is not isomorphic to $\ell_2$, $J$ is  not finite.

Therefore, by Theorem~\ref{FHS}, all isometries of $\widetilde{Z}_n$ have form \eqref{isofiber}.

Suppose that there exists $k_0\in\bbN$ so that $H_{k_0}$ is strictly contained in a component $\widetilde{H}_{j_0}$ of $\widetilde{Z}_n$. Since $\widetilde{H}_{j_0}$ is strictly larger than $H_{k_0}$, there exists $k_0'\ne k_0$ so that $H_{k'_0}\subset \widetilde{H}_{j_0}$. Let $ j_1, j_2\in J$ be  distinct indices in $J$, both different from $j_0$, and let $k_1, k_2 \in\bbN$ be such that $H_{k_1}\subset \widetilde{H}_{j_1}$ and $H_{k_2}\subset \widetilde{H}_{j_2}$.
Let $\s:\bbN\to\bbN$ be bijection  such that $\s(k_0)=k_1$ and $\s(k'_0)=k_2$. Let $T:Z_n\to Z_n$ be defined for all $z=(z_k)_{k\in\bbN}\in Z_n$ by
$$(Tz)_{\s(k)}=z_k.$$
 
By Theorem~\ref{FHS}, $T\in \Isom(Z_n)$, and thus, by assumption $T\in \Isom(\widetilde{Z}_n)$. However $T(\widetilde{H}_{j_0})\cap \widetilde{H}_{j_1}\ne \emptyset$ and $T(\widetilde{H}_{j_0})\cap \widetilde{H}_{j_2}\ne \emptyset$. Thus  $T(\widetilde{H}_{j_0})$ is not a component of $\widetilde{Z}_n$, which contradicts the fact that $T\in \Isom(\widetilde{Z}_n)$.
Hence every Hilbert component of $Z_n$ is a Hilbert component of $\widetilde{Z}_n$ and,  since $P(Z_n)$ contained all bijections of $\bbN$, $P(\widetilde{Z}_n)\subseteq P(Z_n)$. Hence $\Isom(\widetilde{Z}_n)\subseteq \Isom(Z_n)$, and thus $\Isom(Z_n)$ is maximal.

The `moreover' sentence follows from Proposition~\ref{prop: conjugate}.\end{proof}

\begin{rem}\label{ST2max} 
Theorem~\ref{nmax} applies in particular to 
the space $S(T^{(2)})$, the  symmetrization of the 2-con\-ve\-xified Tsirelson space, see \cite{CS89}. Indeed,  it is known that $S(T^{(2)})$ does not contain $\ell_2$, and
it is easy to verify that for all $k,l \in \bbN$, $\|e_k+e_l\|_{S(T^{(2)})}=1$, and thus the standard basis of $S(T^{(2)})$  is pure.  It is clear that the isometry groups of renormings described in Theorem~\ref{nmax} are not almost transitive. 

It is known that any symmetric weak Hilbert space is Hilbertian, but in some sense the space $S(T^{(2)})$ is very close to a weak Hilbert space, see \cite[Note A.e.3 and Proposition A.b.10]{CS89}.

We do not know whether or not the space $S(T^{(2)})$ admits an almost transitive renorming, or whether there exists any   non-hilbertian symmetric space which admits an almost transitive renorming.
\end{rem}

\begin{theorem}\lb{continuum} Let $X$ be a pure Banach space with a non-hilbertian  1-symmetric  basis $E=\{e_k\}_{k=1}^\infty$ and let $Z = (\sum_{k=1}^\infty \oplus \ell_2^k)_E$.
Let $J$ be any subset of $\bbN$, with $\min J \ge 2$, and let $\bbN=\bigcup_{j\in J} A_j$, where $A_j$ are disjoint infinite subsets of $\bbN$. For $k\in A_j$, let $H_k=\ell_2^j$, and let 
$$Z_J=Z_J(E)=\left(\sum_{k=1}^\infty \oplus H_k\right)_{E}.$$

Then $Z_J$ is isomorphic to $Z$, $\Isom(Z_J)$ is maximal, and, if $J\ne J'$ then $\Isom(Z_J)$ and $\Isom(Z_{J'})$  are not conjugate in the group of isomorphisms of $Z$.
Hence there are continuum different (pairwise non-conjugate) maximal  isometry groups  of renormings of $Z$.
\end{theorem}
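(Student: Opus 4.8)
The plan is to follow the template of Theorem~\ref{nmax}, establishing the three assertions in turn: that $Z_J$ is isomorphic to $Z$, that $\Isom(Z_J)$ is maximal, and that distinct $J$ give non-conjugate isometry groups; the continuum statement is then immediate, as there are $2^{\aleph_0}$ admissible $J\subseteq\{2,3,\dots\}$. The isometries of $Z_J$ are described by Theorem~\ref{FHS}: since $E$ is $1$-symmetric, condition (a) there is automatic, so every isometry has the form \eqref{isofiber} for a bijection $\sigma$ of $\bbN$ subject only to $H_{\sigma(k)}\cong H_k$, that is, $\sigma$ preserves fibre dimensions. Because each dimension $j\in J$ occurs on the infinite index set $A_j$, the admissible permutations form the full product $\prod_{j\in J}\operatorname{Sym}(A_j)$, together with arbitrary orthogonal (or unitary) maps on each fibre. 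I record now that the $\widetilde Z_J$-norm on any single fibre $H_k$ is invariant under the whole orthogonal group of $H_k$ (which already lies in $\Isom(Z_J)$), hence is a scalar multiple of the original Euclidean norm; this will be used to transfer isometries between $Z_J$ and its renormings.

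To prove $Z_J\cong Z$ I would invoke Pe\l czy\'nski's decomposition method. Both spaces are $E$-sums of finite-dimensional Euclidean fibres whose dimensions are unbounded, so a Euclidean block of any prescribed dimension embeds as a norm-one complemented subspace inside a larger fibre of the other space; spreading such copies over disjoint fibres, whose $E$-combination reproduces the symmetric structure of $E$, exhibits each of $Z$ and $Z_J$ as a complemented subspace of the other. Together with an appropriate self-similarity $W\cong\bigl(\sum_n\oplus W\bigr)_E$ for $W\in\{Z,Z_J\}$, the decomposition method then yields $Z_J\cong Z$. This isomorphism is the step I expect to be the main obstacle: the packing of small Euclidean blocks into the growing fibres, and above all the verification of the self-similarity (which for a general symmetric $E$, as opposed to $\ell_p$, is delicate because nested $E$-sums need not collapse isometrically), will require the bulk of the bookkeeping.

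For maximality I would argue as in Theorem~\ref{nmax}. Suppose $\widetilde Z_J=(Z_J,|\!|\!|\cdot|\!|\!|)$ is a renorming with $\Isom(\widetilde Z_J)\supseteq\Isom(Z_J)$. As there, each fibre $H_k$ is well-embedded, so by Theorem~\ref{comp} it lies in a Hilbert component, the components partition the fibres, and $\widetilde Z_J=\bigl(\sum_j\oplus\widetilde H_j\bigr)_{\{\widetilde e_j\}}$ with $\widetilde H_j=\bigl(\sum_{k\in B_j}\oplus H_k\bigr)_2$ and infinitely many components since $\widetilde Z_J\not\cong\ell_2$. The new point is that no component can merge infinitely many fibres: if some $B_{j_0}$ were infinite, then $\widetilde H_{j_0}$ would be Euclidean in $|\!|\!|\cdot|\!|\!|$, whereas in the original norm the same subspace is $\bigl(\sum_{k\in B_{j_0}}\oplus H_k\bigr)_E$, which contains an isometric copy of the non-hilbertian symmetric space $X$; as the two norms are equivalent this would force $X$ to be isomorphic to a Hilbert space, a contradiction. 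Hence every component is finite-dimensional, and each (infinite) dimension class $A_j$ is distributed over infinitely many components.

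Finally, if some $\widetilde H_{j_0}$ strictly contained a fibre $H_{k_0}$, it would contain a second fibre $H_{k_0'}$; exactly as in Theorem~\ref{nmax} I would then pick fibres $H_{k_1},H_{k_2}$ of the \emph{same dimensions} as $H_{k_0},H_{k_0'}$ lying in three distinct components $j_0,j_1,j_2$ (available since the relevant dimension classes are infinite and spread over infinitely many finite components) and a dimension-preserving $\sigma$ with $\sigma(k_0)=k_1$, $\sigma(k_0')=k_2$. The associated $T\in\Isom(Z_J)\subseteq\Isom(\widetilde Z_J)$ sends $\widetilde H_{j_0}$ to a subspace meeting both $\widetilde H_{j_1}$ and $\widetilde H_{j_2}$, contradicting that isometries permute Hilbert components. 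Thus the components of $\widetilde Z_J$ are exactly the fibres $H_k$, and, using the fibre-norm observation from the first paragraph, Theorem~\ref{FHS} gives $\Isom(\widetilde Z_J)\subseteq\Isom(Z_J)$, so $\Isom(Z_J)$ is maximal. Non-conjugacy for $J\ne J'$ then follows from Proposition~\ref{prop: conjugate}: a conjugacy would furnish a bijection $\rho$ of the fibre families matching their dimensions, and since each dimension $j$ occurs with multiplicity $\aleph_0$ in $Z_J$ exactly when $j\in J$, this forces $J=J'$. As there are continuum many admissible $J$, we obtain continuum many pairwise non-conjugate maximal isometry groups of renormings of $Z$.
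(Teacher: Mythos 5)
Your treatment of maximality and non-conjugacy is essentially the paper's own argument (the template of Theorem~\ref{nmax} together with Proposition~\ref{prop: conjugate}), and your extra observation that no Hilbert component of a renorming can contain infinitely many fibres --- because such a component would carry an equivalent Euclidean norm on a subspace that, in the original norm, is isometrically a copy of the non-hilbertian space $X$ --- is a correct and genuinely needed supplement: unlike in Theorem~\ref{nmax}, here the admissible permutations must preserve fibre dimensions, so one must know that each (infinite) dimension class is spread over infinitely many components before the fibre-swapping isometry can be produced. The fibre-norm proportionality observation you use to conclude $\Isom(\widetilde Z_J)\subseteq\Isom(Z_J)$ is also sound.

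The gap is the isomorphism $Z_J\cong Z$. You reduce it to mutual complementation plus the self-similarity $W\cong\bigl(\sum_n\oplus W\bigr)_E$ for $W\in\{Z,Z_J\}$, and defer the latter as bookkeeping. It is not bookkeeping: for a general symmetric basis $E$ the nested $E$-sum $\bigl(\sum_n\oplus Z\bigr)_E$ is not known to be (and need not be) isomorphic to $Z$; this is exactly why Proposition~\ref{gencontinuum} of the paper must \emph{assume} $X(X)\cong X$ before running a Pe\l czy\'nski-type argument, an assumption that fails for some symmetric spaces. Mutual complementation alone does not suffice either (Schroeder--Bernstein fails for Banach spaces), and the one square identity that is elementary here, $Z_J\cong Z_J\oplus Z_J$ (each dimension occurs with infinite multiplicity, so doubling multiplicities is a symmetric rearrangement), does not come with the companion identity $Z\cong Z\oplus Z$, which is of the same difficulty as the statement being proved. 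The paper avoids all of this by quoting the result of \cite{BCLT} that for \emph{any} unbounded sequence $(k_j)$ the space $\bigl(\sum_j\oplus\ell_2^{k_j}\bigr)_E$ is $4$-isomorphic to $Z$. If you want a self-contained argument, the workable route is absorption rather than self-similarity: let $Z_\infty$ be the $E$-sum in which every dimension occurs infinitely often; embed its fibres into distinct fibres of $Z_J$ of at least the same dimension, note that the orthogonal complement of a Euclidean subfibre inside a Euclidean fibre is again Euclidean, so that $Z_J$ is uniformly isomorphic to $Z_\infty\oplus R$ with $R$ an $E$-sum of Euclidean spaces, and then absorb $R$ into $Z_\infty$ by a symmetric rearrangement; applying the same to $Z$ gives $Z_J\cong Z_\infty\cong Z$. (Note also that both your argument and this one require $J$ to be infinite, so that the fibre dimensions are unbounded; this restriction still yields the continuum conclusion.)
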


\begin{proof} It is well-known (see \cite{BCLT}) that  if $(k_j)_{j=1}^\infty$ is any \textit{unbounded} sequence of positive integers then $(\sum_{j=1}^\infty \oplus \ell_2^{k_j})_E$ is $4$-isomorphic to $Z$. In particular, $Z_J$ is 
$4$-isomorphic to $Z$ for all $J$.
%Let 
%$$X_{A_j}=\left(\sum_{k\in A_J} \oplus H_k\right)_{E_p}.$$
%
%Then $X_{A_j}$ is isometric to space $Z_j(\ell_p)$ which was defined in Proposition~\ref{nmax}, and thus is isomorphic to $\ell_p$. Further
%$$X_J=\left(\sum_{j\in J} \oplus X_{A_j}\right)_{E_p}\simeq \ell_p(\ell_p) \simeq \ell_p.$$

Let $P(Z_J)$ be the set defined in Theorem~\ref{FHS}. Then, by the symmetry of $E$, $P(Z_J)$ consists of all bijections $\s:\bbN\to\bbN$, so that, for all $j\in J$, $\s(A_j)=A_j$.

By Theorem~\ref{FHS}, $\Isom(Z_J)$ consists of all maps $T:Z_J\to Z_J$ so that 
 there exist $\s\in P(Z_J)$ and surjective linear isometries $T_k: H_k\to H_{\s(k)}$, for all $k\in \bbN$,  so that for all $z=(z_k)_{k\in \bbN}\in Z_J$, where $z_k\in H_k$, and for all $k\in \bbN$, 
\begin{equation}\lb{isofibercont}
(Tz)_{\s(k)}=T_k(z_k).
\end{equation}  
As in Theorem~\ref{nmax}, let $Z_j = (\sum_{k \in A_j}\oplus \ell_2^j)_E$. Then $T\in \Isom(Z_J)$ \wtw   there exist surjective linear isometries $S_j:Z_j\to Z_j$, $j\in J$, so that for all  $z=(\widetilde{z_j})_{j\in J}\in Z_J$, where $\widetilde{z_j}\in Z_j$, and for all $j\in J$, 
\begin{equation}\lb{isofibercont2}
\widetilde{(Tz)_{j}}=S_j(\widetilde{z_j}).
\end{equation}  
Thus 
$$ \Isom(Z_J)= \sum_{j\in J} \oplus \Isom(Z_j).$$

%Since $\Isom(X_{A_j})=\Isom(X_{j}(\ell_p))$ and $\Isom(X_{j}(\ell_p))$ and $\Isom(X_{j'}(\ell_p))$ are not conjugate to each other for natural numbers $j\ne j'$, we conclude that if the subsets $J\ne J'$ then  $\Isom(X_{J})$ and  $\Isom(X_{J'})$  are not conjugate to each other.

The proof that $\Isom(Z_J)$ is maximal is essentially the same as the proof that $\Isom(Z_n)$ is maximal in Theorem~\ref{nmax}.

The fact that if $J\ne J'$ then  $\Isom(Z_{J})$ and  $\Isom(Z_{J'})$  are not conjugate to each other in the isomorphism group of $Z$ follows from Proposition~\ref{prop: conjugate}. 
\end{proof}

\begin{rem} As $Z$ is a separable Banach space  the collection of equivalent norms on $Z$ has cardinality $\frak{c}$. Hence Theorem~\ref{continuum} implies that the  cardinality of any maximal  collection of pairwise non-conjugate maximal bounded subgroups of $\GL(Z)$ 
  is exactly equal to $\frak{c}$. \end{rem}  
Let $E_p$ be the standard unit vector basis of $\ell_p$. It is a well-known consequence of the fact that $\ell_p$ is a prime space \cite{P60}
that $\ell_p$ is isomorphic to $Z_p = (\sum_{k=1}^\infty \oplus \ell_2^k)_{E_p}$ for $1<p<\infty$. Hence we obtain the following consequence
of Theorem~\ref{continuum}. 

\begin{theorem} For $1<p <\infty$, $p \ne 2$, $\ell_p$ admits a continuum of  renormings whose isometry groups are maximal and are not pairwise conjugate
in the isomorphism group of $\ell_p$. \end{theorem}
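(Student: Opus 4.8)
The plan is to deduce this theorem directly from Theorem~\ref{continuum} applied to the space $X = \ell_p$. The two ingredients needed are: first, that $\ell_p$ (equivalently, $E_p$) satisfies the hypotheses of Theorem~\ref{continuum}, namely that $E_p$ is a pure non-hilbertian $1$-symmetric basis; and second, that the space $Z = (\sum_{k=1}^\infty \oplus \ell_2^k)_{E_p}$ arising in Theorem~\ref{continuum} is itself isomorphic to $\ell_p$, so that the renormings constructed there are genuinely renormings of $\ell_p$ and the conjugacy statement takes place in $\GL(\ell_p)$.

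First I would verify the hypotheses. The standard unit vector basis $E_p$ of $\ell_p$ is $1$-symmetric, and since $p \ne 2$ the space $\ell_p$ is not isomorphic to a Hilbert space, so $E_p$ is non-hilbertian. For purity: $E_p$ is pure precisely when no two basis vectors span an isometric copy of $\ell_2^2$ that is ``detachable'' in the sense of the impurity definition. For $p \ne 2$ we have $\|e_i + e_j\|_{\ell_p} = 2^{1/p} \ne \sqrt{2}$, so $(e_i, e_j)$ is never isometrically equivalent to the $\ell_2^2$ basis; hence $E_p$ is pure. This is the only genuinely space-specific check, and it is immediate.

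Next I would invoke the isomorphism $\ell_p \cong Z_p = (\sum_{k=1}^\infty \oplus \ell_2^k)_{E_p}$ noted in the text just before the statement, which follows from Pe\l czy\'nski's theorem that $\ell_p$ is prime together with the observation that each $\ell_2^k$ embeds complementably. With $Z = Z_p \cong \ell_p$ established, Theorem~\ref{continuum} applies verbatim: for each admissible index set $J \subseteq \bbN$ with $\min J \ge 2$ it produces a renorming $Z_J$ of $Z$ whose isometry group $\Isom(Z_J)$ is maximal, and distinct $J$ yield isometry groups that are pairwise non-conjugate in the isomorphism group of $Z$. Since there are continuum many such choices of $J$ (indeed continuum many ways to partition $\bbN$ into infinitely many infinite blocks indexed by such $J$), and since $Z \cong \ell_p$ transports each renorming and the whole conjugacy analysis into $\ell_p$ and $\GL(\ell_p)$, we obtain a continuum of renormings of $\ell_p$ with maximal, pairwise non-conjugate isometry groups.

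I do not expect any serious obstacle here, as the theorem is essentially a corollary: all the analytic work — establishing maximality via Theorem~\ref{FHS} and Theorem~\ref{comp}, and non-conjugacy via Proposition~\ref{prop: conjugate} and Rosenthal's uniqueness theorem — is already carried out in the proof of Theorem~\ref{continuum}. The only point requiring a word of care is transporting the conjugacy statement faithfully: conjugacy of $\Isom(Z_J)$ and $\Isom(Z_{J'})$ is asserted in the isomorphism group of $Z$, and under the isomorphism $Z \cong \ell_p$ this becomes conjugacy in $\GL(\ell_p)$, so the non-conjugacy conclusion is preserved. Hence the proof reduces to citing Theorem~\ref{continuum} with $X = \ell_p$ after the purity check.
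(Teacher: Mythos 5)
Your proof is correct and is essentially the paper's own argument: the paper likewise deduces this theorem as an immediate consequence of Theorem~\ref{continuum}, using Pe\l czy\'nski's primeness theorem \cite{P60} to identify $\ell_p$ with $Z_p = (\sum_{k=1}^\infty \oplus \ell_2^k)_{E_p}$ and then transporting the renormings and the non-conjugacy statement to $\GL(\ell_p)$. Your explicit purity check for $E_p$ (via $\|e_i+e_j\|_{\ell_p} = 2^{1/p} \ne \sqrt{2}$ for $p \ne 2$) is a detail the paper leaves implicit, but it is exactly the right verification.
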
 

The latter theorem may be generalized as follows. 

\begin{prop}\lb{gencontinuum} Let $X$ be a space with a non-hilbertian symmetric basis $E$ such that
\begin{itemize} 
\item $X(X)$  (i.e., the $E$-sum of infinitely many copies of $X$) is isomorphic to $X$, and
\item $X$ contains uniformly complemented and uniformly isomorphic copies of $\ell_2^n$.
\end{itemize} Then $X$ is isomorphic to $Z = (\sum_{k=1}^\infty \oplus \ell_2^k)_E $ and hence $X$ has a continuum of renormings whose isometry groups are maximal and are not
pairwise conjugate in the isomorphism group of $X$. \end{prop}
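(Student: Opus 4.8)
The plan is to reduce everything to the single isomorphism $X \cong Z := (\sum_{k=1}^\infty \oplus \ell_2^k)_E$; once this is in hand, the statement about a continuum of pairwise non-conjugate maximal isometry groups is exactly the conclusion of Theorem~\ref{continuum} applied to $Z$, transported back to $X$ along the isomorphism. Before appealing to Theorem~\ref{continuum} I would first renorm $X$ so that $E$ becomes $1$-symmetric (an equivalent renorming, harmless since all the assertions are isomorphic invariants), and record that a non-hilbertian $1$-symmetric basis is \emph{pure} in Rosenthal's sense: an impure $1$-symmetric basis would make every pair $(e_i,e_j)$ isometric to $\ell_2^2$ and satisfy the averaging condition, which forces $X$ to be a functional hilbertian sum compatible with a symmetric basis, i.e. $X\cong\ell_2$, contrary to hypothesis.

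The core is the isomorphism $X\cong Z$, which I would obtain by the Pe\l czy\'nski decomposition method after showing that $X$ and $Z$ are each isomorphic to a complemented subspace of the other. That $X$ is (isometric to) a $1$-complemented subspace of $Z$ is immediate: in $Z=(\sum_k\oplus\ell_2^k)_E$ the closed span of the first coordinate vectors of the blocks is isometric to $X$ (its norm is $\|(a_k)_k\|_E$), and the coordinatewise projection onto these vectors has norm one. For the reverse embedding I would use both hypotheses: by the second hypothesis fix a constant $C$ and, for each $k$, a subspace $F_k\subset X$ that is $C$-isomorphic to $\ell_2^k$ together with a projection $P_k\colon X\to F_k$ with $\|P_k\|\le C$. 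Placing $F_k$ in the $k$-th summand of $X(X)=(\sum_k\oplus X)_E$, the subspace $(\sum_k\oplus F_k)_E$ is $C$-isomorphic to $Z$, and because $E$ is $1$-unconditional the map $(x_k)_k\mapsto(P_kx_k)_k$ is a projection of norm at most $C$ onto it. Hence $Z$ is isomorphic to a complemented subspace of $X(X)$, which is isomorphic to $X$ by the first hypothesis.

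To run the decomposition I would record two self-sum identities. From $X(X)\cong X$ one gets $X\cong X\oplus X$, by peeling off one summand of $X(X)=(\sum_iX)_E$ and reindexing the rest using the symmetry of $E$. For $Z$ I would invoke the fact, already used in the proof of Theorem~\ref{continuum} and due to \cite{BCLT}, that $(\sum_j\oplus\ell_2^{k_j})_E$ is $4$-isomorphic to $Z$ whenever $(k_j)$ is unbounded; interleaving the blocks of two copies of $Z$ (using symmetry of $E$ and $\|u\frown v\|_E\approx\max(\|u\|_E,\|v\|_E)$) realizes $Z\oplus Z$ as such a sum with $(k_j)=1,1,2,2,\dots$, so $Z\cong Z\oplus Z$. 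Writing $X\cong Z\oplus B$ and $Z\cong X\oplus C$ for the two complementations, the identity $X\cong X\oplus X$ gives $Z\cong X\oplus C\cong(X\oplus X)\oplus C\cong X\oplus(X\oplus C)\cong X\oplus Z$; substituting $X\cong Z\oplus B$ into this yields $Z\cong(Z\oplus B)\oplus Z\cong(Z\oplus Z)\oplus B\cong Z\oplus B\cong X$, using $Z\cong Z\oplus Z$ and $Z\oplus B\cong X$. Thus $X\cong Z$.

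I expect the main difficulty to be organizational rather than deep: making the complemented embedding of $Z$ into $X(X)$ precise with uniform control of constants (so that the $E$-sum of the local isomorphisms and of the projections $P_k$ really are bounded, which is exactly where $1$-unconditionality of $E$ enters), and assembling the Pe\l czy\'nski swindle cleanly from the two identities $X\cong X\oplus X$ and $Z\cong Z\oplus Z$ rather than from a stronger infinite-sum hypothesis. The one genuinely delicate point is verifying purity of the $1$-symmetric renorming required by Theorem~\ref{continuum}, which I would dispatch by the structural remark above that impurity of a $1$-symmetric basis would force the space to be Hilbertian.
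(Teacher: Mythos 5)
Your proof is correct and follows essentially the same route as the paper's: the paper's entire argument is that the hypotheses make $Z$ isomorphic to a complemented subspace of $X$, that $Z$ is isomorphic to its square $Z\oplus Z$, and that the Pe\l czy\'nski decomposition method then gives $X\cong Z$ --- exactly your scheme, with your explicit complemented embedding of $(\sum_k\oplus F_k)_E$ into $X(X)$, your interleaving proof of $Z\cong Z\oplus Z$ via \cite{BCLT}, and your square-version swindle simply filling in what the paper leaves implicit. Your further observation that a non-hilbertian $1$-symmetric basis is automatically pure (so that Theorem~\ref{continuum} can legitimately be invoked) addresses a hypothesis that the paper's statement and proof of this proposition pass over silently, and it is a sound, indeed cleaner, alternative to the convex-hull renorming the paper uses for the same purpose in its theorem on the universal space $U$.
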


\begin{proof} The hypotheses imply that $Z$ is isomorphic to a complemented subspace of $X$. Since $Z$ is isomorphic to its square
$Z \oplus Z$ (e.g., with the sum norm) it follows from the Pe\l czy\'nski decomposition method (see e.g. \cite{LT}) that $Z$ is isomorphic to $X$.
\end{proof}

\begin{rem}
We note that there exist symmetric  spaces $X$ which are not isomorphic to $\ell_p$ and so that $X(X)$ is isomorphic to $X$, see \cite{Read}.
\end{rem}

\section{Isometry groups not contained in any maximal bounded subgroup of the isomorphism group}\lb{secnonmax}

Let $S$ be the collection of all partitions $\mathcal{B}=(B_k)_{k=1}^\infty$ of $\bbN$ into finite sets of bounded size, i.e. 
$$N_{\mathcal{B}} :=\max_k |B_k|<\infty.$$ We define a partial order  $\le$ on $S$ by 
$\mathcal{B}\le \tilde{\mathcal{B}}$ if $\mathcal{B}$ is a refinement of $\tilde{\mathcal{B}}$, and let $S_{\mathcal B}=\{\tilde{\mathcal{B}}\in S\ : \ \mathcal{B}\le \tilde{\mathcal{B}}\}$.

 The following properties are easily proved:
\begin{itemize}
\item $(S,\le)$ has cardinality of the continuum,
\item $(S,\le)$ contains order-isomorphic copies of every countable ordinal,
\item $(S_{\mathcal B},\le)$ is order-isomorphic to $(S,\le)$ with $\mathcal{B}\longleftrightarrow (\{k\})_{k=1}^\infty$ in this isomorphism.
\end{itemize}

Let $E_p=\{e_k\}_{k=1}^\infty$ be the standard basis for $\ell_p$, where $1<p<\infty$, $p\ne 2$, and, for $k\ge 1$, let $H_k$ be isometric to $\ell_2^{2^k}$.  We define a space $Y$ as follows:
\begin{equation}\lb{defy}
Y:= \left(\sum_{k=1}^\infty\oplus H_k\right)_{E_p}.
\end{equation} 

By \cite{P60} $Y$ is isometric to a renorming of $\ell_p$.
Let $(x_i)_{i=1}^\infty \subset \ell_p$ be the basis of $\ell_p$ which is sent to the natural basis of $Y$ under the isometry.

For  each $\mathcal{B}\in S$, consider the \textit{specific} renorming of $\ell_p$  given by 
$$Y_\mathcal{B}:= \left(\sum_{k=1}^\infty\oplus H^\mathcal{B}_k\right)_{E_p},$$ where $ H^\mathcal{B}_k= (\sum_{i\in B_k}\oplus H_i)_{\ell_2},$ 
for which $(x_i)$ is the basis of $\ell_p$ corresponding to the natural basis of $Y_\mathcal{B}$. This is possible because the condition
 $N_{\mathcal{B}} :=\max_k |B_k|<\infty$ guarantees  
that the norm of $Y_\mathcal{B}$ is equivalent to the norm of $Y$. In fact, we have 
$$ \|x\|_Y \le \|x\|_{Y_\mathcal{B}} \le N_{\mathcal{B}}^{1/p - 1/2} \|x\|_Y \qquad(x \in \ell_p, 1<p<2)$$
and
$$ N_{\mathcal{B}}^{1/p - 1/2}\|x\|_Y \le \|x\|_{Y_\mathcal{B}} \le \|x\|_Y \qquad(x \in \ell_p, 2<p<\infty).$$
We identify $Y_\mathcal{B}$ with this particular renorming of $\ell_p$ in which the basis $(x_i)$ of $\ell_p$ corresponds to the natural basis of $Y_\mathcal{B}$. Note that  $Y_\mathcal{B}$ is not a maximal renorming of $\ell_p$ since
$$  \Isom(Y_\mathcal{B}) \subsetneq \Isom(Y_{\tilde{\mathcal{B}}})$$
for all $\tilde{\mathcal{B}} \in S_\mathcal{B} \setminus \{\mathcal{B}\}$.

\begin{prop}\lb{conjugate}
Let $\mathcal{B}, \tilde{\mathcal{B}}\in S$. If $\Isom(Y_\mathcal{B})$ and $\Isom(Y_{\tilde{\mathcal{B}}})$ are conjugate in the isomorphism group of $\ell_p$ then $\mathcal{B}=\tilde{\mathcal{B}}$.
\end{prop}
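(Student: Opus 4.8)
The plan is to reduce the statement to the uniqueness theory for pure functional hilbertian sums that underlies Proposition~\ref{prop: conjugate}, and then show that a conjugacy would force the two partitions to have identical multiset-of-block-sizes data in a way that pins down $\mathcal{B}=\tilde{\mathcal{B}}$ exactly. First I would observe that each $Y_\mathcal{B}$ is itself a functional hilbertian sum $(\sum_k \oplus H^\mathcal{B}_k)_{E_p}$ with $\dim H^\mathcal{B}_k = \sum_{i\in B_k} 2^i$, and that the basis $E_p$ of $\ell_p$ is pure (since $\|e_k+e_l\|_{\ell_p}=2^{1/p}\ne\sqrt2$ for $p\ne2$, no two coordinates span an isometric $\ell_2^2$). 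Thus both $Y_\mathcal{B}$ and $Y_{\tilde{\mathcal{B}}}$ satisfy the hypotheses of Proposition~\ref{prop: conjugate}, provided I also check maximality of their isometry groups.

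Establishing maximality is the step I expect to carry the real content, since Proposition~\ref{prop: conjugate} requires $G_1,G_2$ to be maximal before concluding. The argument should run exactly as in Theorem~\ref{nmax}: given a renorming $\widetilde{Y_\mathcal{B}}$ with $\Isom(\widetilde{Y_\mathcal{B}})\supseteq\Isom(Y_\mathcal{B})$, the fibers $H^\mathcal{B}_k$ remain well-embedded Euclidean subspaces, so by Theorem~\ref{comp} each sits inside a Hilbert component; a permutation argument using Theorem~\ref{FHS} (building an isometry $T\in\Isom(Y_\mathcal{B})$ that would split a putative larger component across two others) shows no component of $\widetilde{Y_\mathcal{B}}$ can strictly contain a fiber, forcing $\Isom(\widetilde{Y_\mathcal{B}})\subseteq\Isom(Y_\mathcal{B})$. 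The one genuine subtlety relative to Theorem~\ref{nmax} is that the fibers $H^\mathcal{B}_k$ now have \emph{varying} dimensions, so the relevant permutation group $P(Y_\mathcal{B})$ consists of bijections $\s$ of $\bbN$ preserving block-dimension (i.e. $\dim H^\mathcal{B}_{\s(k)}=\dim H^\mathcal{B}_k$) rather than all bijections; I must verify that there are still enough such permutations to run the splitting argument, which holds because each prescribed dimension value is attained infinitely often.

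Once maximality is in hand, Proposition~\ref{prop: conjugate} yields, from a conjugacy $\Isom(Y_\mathcal{B})=T^{-1}\Isom(Y_{\tilde{\mathcal{B}}})T$, a bijection $\rho\colon\bbN\to\bbN$ with $H^\mathcal{B}_k$ isometric to $H^{\tilde{\mathcal{B}}}_{\rho(k)}$ for all $k$; since these are Euclidean spaces, this means $\dim H^\mathcal{B}_k=\dim H^{\tilde{\mathcal{B}}}_{\rho(k)}$, i.e. $\sum_{i\in B_k}2^i=\sum_{i\in \tilde B_{\rho(k)}}2^i$. The final, combinatorial step is to deduce $\mathcal{B}=\tilde{\mathcal{B}}$ from this equality of fiber dimensions together with the fact that each $H_i=\ell_2^{2^i}$ was chosen with dimension $2^i$. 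Here the binary-expansion choice of dimensions is decisive: the map $B\mapsto\sum_{i\in B}2^i$ is injective on finite subsets of $\bbN$, so the multiset of dimensions $\{\sum_{i\in B_k}2^i\}$ determines the multiset of blocks $\{B_k\}$ uniquely, and since both are partitions of $\bbN$ the blocks must coincide, giving $\mathcal{B}=\tilde{\mathcal{B}}$. I expect the maximality argument to be the main obstacle; the binary-dimension trick makes the closing combinatorics essentially immediate.
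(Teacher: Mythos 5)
Your reduction to Proposition~\ref{prop: conjugate} is blocked at exactly the step you identified as carrying the real content: the groups $\Isom(Y_\mathcal{B})$ are \emph{never} maximal, so the maximality hypothesis of that proposition cannot be established. The paper's own proof opens by noting precisely this (``this does not follow from Proposition~\ref{prop: conjugate} because the maximality hypothesis is not satisfied''), and just before the proposition it observes that $\Isom(Y_\mathcal{B})\subsetneq\Isom(Y_{\tilde{\mathcal{B}}})$ for every $\tilde{\mathcal{B}}\in S_\mathcal{B}\setminus\{\mathcal{B}\}$: coarsening the partition (with block sizes still bounded) merges fibers into larger Hilbert components and strictly enlarges the isometry group. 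Moreover, your proposed fix for the ``subtlety'' is exactly backwards: since $H_i=\ell_2^{2^i}$ and the blocks $B_k$ are disjoint, the fiber dimensions $n_k^\mathcal{B}=\sum_{i\in B_k}2^i$ are \emph{pairwise distinct} by uniqueness of binary representations (the very fact you invoke at the end), so each dimension value is attained exactly once, not infinitely often. Hence the dimension-preserving permutation group $P(Y_\mathcal{B})$ is trivial, and the splitting argument of Theorem~\ref{nmax} --- which needs an isometry carrying one fiber of a putative larger component onto fibers lying in two different components --- has no permutations to work with. This is not a repairable technicality; maximality genuinely fails for every $Y_\mathcal{B}$.

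The paper's actual argument bypasses maximality entirely. If $\Isom(Y_\mathcal{B})=T^{-1}\Isom(Y_{\tilde{\mathcal{B}}})T$, then transporting invariant subspaces through $T$ shows the two groups have invariant subspaces of exactly the same dimensions. By Theorem~\ref{FHS}, and because the fiber dimensions are pairwise distinct, an $n$-dimensional $\Isom(Y_\mathcal{B})$-invariant subspace of $\ell_p$ exists if and only if $n=\sum_{k\in A}n_k^\mathcal{B}$ for some finite $A\subset\bbN$, i.e.\ if and only if $n=\sum_{i\in C}2^i$ where $C$ is a finite union of blocks of $\mathcal{B}$. Uniqueness of binary representations then shows that the set of achievable dimensions determines the collection of finite unions of blocks, whose minimal nonempty elements under inclusion are the blocks themselves; hence $\mathcal{B}=\tilde{\mathcal{B}}$. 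Your closing binary-expansion step is the right ingredient, but it must be fed by invariant-subspace dimensions rather than by a bijection obtained from Proposition~\ref{prop: conjugate}, which is unavailable here.
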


\begin{proof} This does not follow from Proposition~\ref{prop: conjugate} because the maximality hypothesis is not satisfied.
Note that $n_k^\mathcal{B}:=\dim H^\mathcal{B}_k=\sum_{i\in B_k} 2^i$. By the uniqueness of binary representations  the map $k\mapsto n_k^\mathcal{B}$ is one-to-one.

Since $\Isom(Y_\mathcal{B})$ and
 $\Isom(Y_{\tilde{\mathcal{B}}})$
 are conjugate to each other  it follows that the  $\Isom(Y_\mathcal{B})$-invariant
 and   $\Isom(Y_{\tilde{\mathcal{B}}})$-invariant subspaces of $\ell_p$ have the same dimensions. 
By Theorem~\ref{FHS} there is an $n$-dimensional $\Isom(Y_\mathcal{B})$-invariant subspace of $\ell_p$ if and only if $n = \sum_{k \in A}n_k^\mathcal{B}$ for some finite $A \subset \mathbb{N}$, and similarly for  $\Isom(Y_{\tilde{\mathcal{B}}})$. Thus, by uniqueness of the binary representation, $\mathcal{B}=\tilde{\mathcal{B}}$.
% $X$ has $\Isom(Y_{\tilde{\mathcal{B}}})$-invariant subspaces of dimension $n_k^\mathcal{B}$ for $k\ge1$ also. By Theorem~\ref{FHS}, it follows that $\tilde{\mathcal{B}}$ is a refinement of $\mathcal{B}$. Similarly, $\mathcal{B}$ is a refinement of $\tilde{\mathcal{B}}$.  Thus $\mathcal{B}=\tilde{\mathcal{B}}$.
\end{proof}

\begin{prop}\lb{Brenormings}
Let     
$\mathcal{B}\in S$ and let $(\ell_p, \|\cdot\|_0)$ be a renorming of $\ell_p$ so that
$\Isom(\ell_p, \|\cdot\|_0)\supseteq\Isom(Y_\mathcal{B})$. Then there exists  $\tilde{\mathcal{B}}\in S_\mathcal{B}$ so that
$$\Isom(\ell_p, \|\cdot\|_0)=\Isom(Y_{\tilde{\mathcal{B}}}).$$ In particular, $\Isom(\ell_p, \|\cdot\|_0)$ is not maximal.
Conversely, every $\tilde{\mathcal{B}}\in S_\mathcal{B}$ determines such a renorming. 
\end{prop}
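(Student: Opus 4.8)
The plan is to reduce the whole statement to an analysis of $G$-invariant renormings, where $G:=\Isom(Y_\mathcal{B})$, and then to read off both isometry groups from Theorem~\ref{FHS}. First I would identify $G$ explicitly. Since $1<p<\infty$, $p\ne2$, the basis $E_p$ is pure (no pair $(e_k,e_l)$ is isometric to $\ell_2^2$), and the dimensions $n_k^\mathcal{B}=\sum_{i\in B_k}2^i$ are pairwise distinct by uniqueness of binary expansions; hence Theorem~\ref{FHS} gives $P(Y_\mathcal{B})=\{\mathrm{id}\}$, so that $G$ is exactly the block-diagonal group $\prod_k \Isom(H^\mathcal{B}_k)$, acting as an arbitrary Euclidean isometry on each $H^\mathcal{B}_k$ and preserving the decomposition. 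Because each factor $\Isom(H^\mathcal{B}_k)$ acts transitively on the Euclidean sphere of $H^\mathcal{B}_k$, any norm $\|\cdot\|_0$ with $\Isom(\ell_p,\|\cdot\|_0)\supseteq G$ is $G$-invariant, and therefore $\|x\|_0$ depends only on the scalars $(\|x_k\|_2)_k$, where $x=(x_k)$ with $x_k\in H^\mathcal{B}_k$. In particular each $H^\mathcal{B}_k$ is Euclidean and well-embedded in $(\ell_p,\|\cdot\|_0)$: if $y,y'\in H^\mathcal{B}_k$ have equal $\|\cdot\|_0$-norm they have equal $\ell_2$-norm, and a suitable element of $\Isom(H^\mathcal{B}_k)\subseteq G$ carries $y$ to $y'$ while fixing the complementary blocks.

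Next I would determine the Hilbert components of $(\ell_p,\|\cdot\|_0)$. By Theorem~\ref{comp} each $H^\mathcal{B}_k$ lies in a Hilbert component $\widetilde H$, and since distinct components meet only in $\{0\}$ this component is unique, hence $G$-invariant (every $g\in G$ fixes $H^\mathcal{B}_k$ setwise and maps components to components by the last assertion of Theorem~\ref{FHS}). A $G$-invariant closed subspace must be a coordinate subspace $\overline{\bigoplus_{k\in A}H^\mathcal{B}_k}$: applying the block reflection that is $-I$ on $H^\mathcal{B}_k$ and the identity elsewhere shows $\widetilde H$ contains the $H^\mathcal{B}_k$-component of each of its vectors, and transitivity of $\Isom(H^\mathcal{B}_k)$ forces $\widetilde H\cap H^\mathcal{B}_k$ to be $0$ or all of $H^\mathcal{B}_k$. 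Being Euclidean, each component is thus exactly $\widetilde H_j:=(\sum_{k\in A_j}\oplus H^\mathcal{B}_k)_2$ for the corresponding class $A_j$ of indices, and grouping the blocks this way expresses $(\ell_p,\|\cdot\|_0)$ as a functional hilbertian sum $(\sum_j\oplus\widetilde H_j)_{\bar\Phi}$. The induced outer basis is pure: an impurity at a pair $(j,j')$ would make $\widetilde H_j\oplus_2\widetilde H_{j'}$ a well-embedded Hilbert space strictly larger than the component $\widetilde H_j$, contradicting maximality. Setting $\tilde B_j:=\bigcup_{k\in A_j}B_k$ defines a partition $\tilde{\mathcal{B}}$ coarsening $\mathcal{B}$, i.e. $\mathcal{B}\le\tilde{\mathcal{B}}$, with $\widetilde H_j=H^{\tilde{\mathcal{B}}}_j$.

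It remains to check $\tilde{\mathcal{B}}\in S_\mathcal{B}$ and to compare the two groups. For the first point the hypothesis $p\ne2$ is essential: each $\widetilde H_j$ is, isometrically in $\|\cdot\|_0$, a copy of $\ell_2^{d_j}$ with $d_j=\dim\widetilde H_j$, hence a $C$-isomorphic copy of $\ell_2^{d_j}$ in $\ell_p$, where $C$ is the equivalence constant between $\|\cdot\|_0$ and the $\ell_p$-norm; since the Banach--Mazur distance from $\ell_2^{d}$ to any $d$-dimensional subspace of $\ell_p$ grows like $d^{|1/2-1/p|}$, the $d_j$ are bounded, and as $d_j=\sum_{i\in\tilde B_j}2^i\ge|\tilde B_j|$ the blocks $\tilde B_j$ have bounded size, so $\tilde{\mathcal{B}}\in S_\mathcal{B}$. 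Finally, the dimensions $d_j=\sum_{i\in\tilde B_j}2^i$ are pairwise distinct (disjoint nonempty $\tilde B_j$ and binary uniqueness), so Theorem~\ref{FHS} gives $P=\{\mathrm{id}\}$ both for the pure sum $(\sum_j\oplus\widetilde H_j)_{\bar\Phi}$ and for $Y_{\tilde{\mathcal{B}}}=(\sum_j\oplus H^{\tilde{\mathcal{B}}}_j)_{E_p}$. Consequently both isometry groups equal the block-diagonal group $\prod_j\Isom(\widetilde H_j)$ relative to the common decomposition $\ell_p=\bigoplus_j\widetilde H_j$, even though the outer norms $\bar\Phi$ and $E_p$ differ; hence $\Isom(\ell_p,\|\cdot\|_0)=\Isom(Y_{\tilde{\mathcal{B}}})$. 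Non-maximality follows since any strict coarsening of $\tilde{\mathcal{B}}$ inside $S$ strictly enlarges the isometry group (the containment noted before Proposition~\ref{conjugate}), and the converse is immediate: for $\tilde{\mathcal{B}}\in S_\mathcal{B}$ the norm $\|\cdot\|_{Y_{\tilde{\mathcal{B}}}}$ satisfies $\Isom(Y_\mathcal{B})\subseteq\Isom(Y_{\tilde{\mathcal{B}}})$ because $\mathcal{B}\le\tilde{\mathcal{B}}$.

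I would flag two steps as the real content. The structural claim that a $G$-invariant renorming is forced to be a functional hilbertian sum with \emph{coordinate} Euclidean components and a \emph{pure} outer norm is where Rosenthal's machinery (Theorems~\ref{comp} and~\ref{FHS}) must be combined with the irreducibility of the blocks; and the boundedness of the component dimensions is exactly where $p\ne2$ enters, through the non-embeddability of high-dimensional Euclidean spaces into $\ell_p$ with uniform distortion.
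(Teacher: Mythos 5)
Your architecture is essentially the paper's: show each $H^\mathcal{B}_k$ is a well-embedded Hilbert subspace of $(\ell_p,\|\cdot\|_0)$, group the blocks into Hilbert components $\widetilde H_j=\Span(H^\mathcal{B}_k\colon k\in A_j)$ with a pure outer basis, set $\tilde B_j=\bigcup_{k\in A_j}B_k$, and read off both isometry groups from Theorem~\ref{FHS}. Your explicit computation $P(Y_\mathcal{B})=\{\mathrm{id}\}$ and the reflection-plus-irreducibility argument for why components must be coordinate subspaces are correct, and they usefully expand what the paper compresses into ``arguing as in the proof of Theorem~\ref{nmax}.''

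The genuine gap is in the step you yourself flag as essential: the verification that $\tilde{\mathcal{B}}\in S_\mathcal{B}$. Your claim that the Banach--Mazur distance from $\ell_2^d$ to \emph{any} $d$-dimensional subspace of $\ell_p$ grows like $d^{|1/2-1/p|}$ is false: by Dvoretzky's theorem (the very theorem this paper uses in Lemma~\ref{lem: circle}), $\ell_p$ contains $(1+\varepsilon)$-isomorphic copies of $\ell_2^d$ for \emph{every} $d$, so no such lower bound holds for arbitrary subspaces. Moreover, the intermediate conclusion you draw, that the dimensions $d_j=\dim\widetilde H_j$ are bounded, is itself false and is not what is needed: already in $Y_\mathcal{B}$ the components $H^\mathcal{B}_k$ have dimensions $\sum_{i\in B_k}2^i\to\infty$. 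What must be bounded is the number of constituent blocks $|A_j|$ (equivalently $|\tilde B_j|$), and the argument hiding behind the paper's one-line assertion runs differently: pick one unit vector $u_k\in H^\mathcal{B}_k$ for each $k\in A_j$. Then $\Span(u_k\colon k\in A_j)$ is isometrically $\ell_p^{|A_j|}$ in the $Y_\mathcal{B}$-norm, while in the $\|\cdot\|_0$-norm it sits inside the Euclidean space $\widetilde H_j$ and hence is isometrically $\ell_2^{|A_j|}$. If the two norms are $C$-equivalent, this forces $|A_j|^{|1/2-1/p|}\le C^2$, since the Banach--Mazur distance between $\ell_p^m$ and $\ell_2^m$ is $m^{|1/2-1/p|}$; thus $|A_j|$ is bounded and $|\tilde B_j|\le N_\mathcal{B}|A_j|$ is bounded. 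This is where $p\ne 2$ enters, as you intended, but through the distance between $\ell_p^m$ and $\ell_2^m$, not through a (nonexistent) lower bound on distances of subspaces of $\ell_p$ from Euclidean space. A further caution: your assertion that each component is \emph{exactly} $(\sum_{k\in A_j}\oplus H^\mathcal{B}_k)_2$, with equal weights on the blocks, is stated without proof, and your final identification of the two isometry groups needs it (a priori the Euclidean norm of a component could weight the blocks unequally, which would change its orthogonal group); the paper's proof makes the same unargued claim, so this is a shared, subtler point rather than a defect peculiar to your write-up.
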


\begin{proof}
By Theorem~\ref{FHS}, it is clear that for every $\tilde{\mathcal{B}}\in S_\mathcal{B}$, $\Isom(Y_{\tilde{\mathcal{B}}})\supseteq\Isom(Y_\mathcal{B})$, which is the last sentence of the theorem.

Now suppose that $\Isom(\ell_p, \|\cdot\|_0)\supseteq\Isom(Y_\mathcal{B})$. Thus, $(\ell_p, \|\cdot\|_0)$ is a functional hilbertian sum
$$(\ell_p,\|\cdot\|_0)=\left(\sum_{k=1}^\infty\oplus H^\mathcal{B}_k\right)_{E},$$
for some (possibly impure) $1$-unconditional basis $E$. Moreover, $(x_i)_{i=1}^\infty$ is the basis of $\ell_p$ corresponding to the natural basis of $(\sum_{k=1}^\infty\oplus H^\mathcal{B}_k)_{E}$.
Arguing as in the proof of Theorem~\ref{nmax},
  $\bbN$ can be partitioned into disjoint subsets $\{A_j\}_{j\in J}$, so that every  Hilbert component of $(\ell_p, \|\cdot\|_0)$ is given by
$$\widetilde{H}_j= \left(\sum_{k\in A_j} \oplus H^\mathcal{B}_k\right)_2,$$
and
$$(\ell_p, \|\cdot\|_0)= \left(\sum_{j\in J} \oplus \widetilde{H}_j\right)_{E_0},$$
for some  pure $1$-unconditional basis $E_0$. Let $$\tilde B_j = \cup_{k \in A_j} B_k \qquad(j\ge1).$$ Since $\|\cdot\|_0$ is equivalent to
$\|\cdot\|_{Y_\mathcal{B}}$ it follows that $\max_{j\ge1} |\tilde B_j| < \infty$.                Let $\tilde{\mathcal{B}} 
 = (\tilde B_j)_{j=1}^\infty$.
Then $\tilde{\mathcal{B}} \in S_\mathcal{B}$, and (since $E_0$ is pure) Theorem~\ref{FHS} gives
$$\Isom(\ell_p, \|\cdot\|_0)=\Isom(Y_{\tilde{\mathcal{B}}}).$$
Since (as observed above) $Y_{\mathcal{B}}$ is not maximal for any $\mathcal{B} \in S$, it follows that $(\ell_p, \|\cdot\|_0)$ is not maximal. 
 \end{proof}

Combining the last two propositions gives the main result of this section.

\begin{theorem}\lb{Bnonmax} 
For $1< p<\infty$, $p\ne2$, $\ell_p$  has a  continuum of  renormings  none of whose isometry groups  is  contained in any maximal bounded subgroup of the isomorphism group  of $\ell_p$. Moreover, these isometry groups are not pairwise conjugate in $\GL(\ell_p)$.
\end{theorem}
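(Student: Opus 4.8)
The plan is to assemble Theorem~\ref{Bnonmax} directly from the two preceding propositions, since all the structural work has already been carried out. First I would observe that for each $\mathcal{B}\in S$, the renorming $Y_\mathcal{B}$ of $\ell_p$ provides a bounded subgroup $\Isom(Y_\mathcal{B})$ of $\GL(\ell_p)$, and the key claim to establish is that this subgroup is \emph{not} contained in any maximal bounded subgroup. Suppose for contradiction that $\Isom(Y_\mathcal{B})$ were contained in some maximal bounded subgroup $G$. By the correspondence between bounded subgroups and isometry groups of renormings (explained in the Introduction via $\|x\|_G = \sup_{g\in G}\|gx\|$), $G$ would equal $\Isom(\ell_p,\|\cdot\|_0)$ for some equivalent maximal norm $\|\cdot\|_0$ with $\Isom(\ell_p,\|\cdot\|_0)\supseteq\Isom(Y_\mathcal{B})$. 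But Proposition~\ref{Brenormings} tells us precisely that any such renorming satisfies $\Isom(\ell_p,\|\cdot\|_0)=\Isom(Y_{\tilde{\mathcal{B}}})$ for some $\tilde{\mathcal{B}}\in S_\mathcal{B}$, and that every such $Y_{\tilde{\mathcal{B}}}$ is itself non-maximal. This contradicts the maximality of $G$, so no maximal bounded subgroup can contain $\Isom(Y_\mathcal{B})$.

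Next I would count the renormings. The family $\{Y_\mathcal{B}\}_{\mathcal{B}\in S}$ is indexed by $S$, the collection of partitions of $\bbN$ into finite sets of bounded size, and it was recorded that $(S,\le)$ has cardinality of the continuum. To get a \emph{continuum} of genuinely distinct examples, I would invoke Proposition~\ref{conjugate}: if $\mathcal{B}\ne\tilde{\mathcal{B}}$ then $\Isom(Y_\mathcal{B})$ and $\Isom(Y_{\tilde{\mathcal{B}}})$ are not conjugate in $\GL(\ell_p)$. Thus the map $\mathcal{B}\mapsto\Isom(Y_\mathcal{B})$ yields continuum-many pairwise non-conjugate isometry groups, and each of them is, by the previous paragraph, not contained in any maximal bounded subgroup. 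This simultaneously delivers both the main assertion and the `moreover' clause about non-conjugacy.

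I do not anticipate a genuine obstacle here, since the two propositions were designed exactly to feed into this theorem; the only care needed is the translation between the language of bounded subgroups of $\GL(\ell_p)$ and the language of isometry groups of renormings. Concretely, I must make sure that ``$\Isom(Y_\mathcal{B})$ is contained in a maximal bounded subgroup $G$'' really does force $G=\Isom(\ell_p,\|\cdot\|_0)$ for a renorming dominating $Y_\mathcal{B}$: this is the standard fact that any maximal bounded subgroup $G$ is the full isometry group of the norm $\|\cdot\|_G$ it induces, so membership $\Isom(Y_\mathcal{B})\subseteq G$ yields $\Isom(Y_\mathcal{B})\subseteq\Isom(\ell_p,\|\cdot\|_G)=G$ with $\|\cdot\|_G$ equivalent to the $\ell_p$ norm. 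Once this identification is in place, Proposition~\ref{Brenormings} applies verbatim and produces the contradiction. I would phrase the argument to emphasize that the chain $\Isom(Y_\mathcal{B})\subsetneq\Isom(Y_{\tilde{\mathcal{B}}})$ for $\tilde{\mathcal{B}}\in S_\mathcal{B}\setminus\{\mathcal{B}\}$ is an ascending but never-terminating chain, so no member can be maximal, which is the conceptual heart of why these groups escape every maximal bounded subgroup.
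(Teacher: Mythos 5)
Your proof is correct and follows essentially the same route as the paper, whose own proof is simply the remark that Theorem~\ref{Bnonmax} follows by combining Proposition~\ref{conjugate} (pairwise non-conjugacy, hence continuum many distinct groups) with Proposition~\ref{Brenormings} (any renorming dominating $Y_\mathcal{B}$ has isometry group $\Isom(Y_{\tilde{\mathcal{B}}})$, which is never maximal). Your additional care in identifying a maximal bounded subgroup $G$ with $\Isom(\ell_p,\|\cdot\|_G)$ via the induced norm is exactly the right way to make that combination rigorous.
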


\begin{rem} \lb{gennonmax}
Symmetry of the standard basis of $\ell_p$ is not used in the proof of Theorem~\ref{Bnonmax}. In particular, the theorem holds for any space $Z$ 
with the following properties:
 \begin{itemize}
\item $Z$ has an unconditional basis $E$ such that no subsequence of $E$ is equivalent to the unit vector basis of $\ell_2$; 
\item $Z$ is isomorphic to  $(\sum_{n=1}^\infty\oplus H_n)_{E}$ 
for every collection $(H_n)_{n=1}^\infty$ of finite-dimensional Hilbert spaces. 
\end{itemize}
These properties are satisfied for example by the space $(\sum_{n=1}^\infty \oplus \ell_2^n)_{E}$, where $E$ is   symmetric,  pure, and non-hilbertian \cite{BCLT}.  
\end{rem}

\begin{prop} 
$T^{(2)}$ admits a continuum of renormings  none of whose isometry groups is contained in any   maximal bounded subgroup of the isomorphism group  of $T^{(2)}$. Moreover, these isometry groups are not pairwise conjugate in $\GL(T^{(2)})$. 
\end{prop}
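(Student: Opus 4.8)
The plan is to obtain this as a direct corollary of Remark~\ref{gennonmax}, applied to $Z = T^{(2)}$ equipped with its standard normalized $1$-unconditional basis $E = \{e_k\}_{k=1}^\infty$. Remark~\ref{gennonmax} already records that the entire machinery of this section --- Propositions~\ref{conjugate} and \ref{Brenormings} and Theorem~\ref{Bnonmax} --- uses only Theorem~\ref{FHS} together with the two listed structural properties, never the symmetry of the base. Thus the whole argument reduces to verifying, for $T^{(2)}$ with this basis, the two bullet-point hypotheses of that remark; once they hold, the verbatim proof of Theorem~\ref{Bnonmax} produces the desired continuum of pairwise non-conjugate isometry groups, none contained in a maximal bounded subgroup of $\GL(T^{(2)})$.

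For the first bullet I would check that no subsequence of $E$ is equivalent to the unit vector basis of $\ell_2$. This is immediate from the well-known fact that $T^{(2)}$ contains no isomorphic copy of $\ell_2$ (see \cite{CS89}, cf.\ Remark~\ref{ST2max}): a subsequence of $E$ spans a subspace of $T^{(2)}$, so were it $\ell_2$-equivalent, $T^{(2)}$ would contain $\ell_2$. As in the $\ell_p$ case, this also guarantees that the only Euclidean directions in a functional hilbertian sum built on $E$ are those supplied by the prescribed fibers, which is exactly what the Hilbert-component analysis of Proposition~\ref{Brenormings} needs.

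The main obstacle is the second bullet: the blocking isomorphism $T^{(2)} \cong (\sum_{n=1}^\infty \oplus H_n)_E$ for every collection $(H_n)$ of finite-dimensional Hilbert spaces. Here I would use that $T^{(2)}$ is a weak Hilbert space with Asymptotic-$\ell_2$ structure: a block of length $d$ placed sufficiently far out along $E$ is $C$-equivalent to $\ell_2^d$ and $C$-complemented in $T^{(2)}$, with $C$ independent of $d$. Sending each $H_n = \ell_2^{d_n}$ to such a far-out block realizes $(\sum_n \oplus H_n)_E$ as a complemented subspace of $T^{(2)}$, while $T^{(2)}$ embeds complementably in the sum; since $T^{(2)}$ is isomorphic to its square, the Pe\l czy\'nski decomposition method then yields $(\sum_n \oplus H_n)_E \cong T^{(2)}$. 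The cleanest formulation is to cite the Bourgain--Casazza--Lindenstrauss--Tzafriri analysis of $T^{(2)}$ in \cite{BCLT} (cf.\ \cite{CS89}), which provides precisely these Hilbertian-blocking isomorphisms --- playing here the role that primeness of $\ell_p$ \cite{P60} plays for the space $Y$ in \eqref{defy}, and that \cite{BCLT} plays for the symmetric example at the end of Remark~\ref{gennonmax}.

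Finally, I would note the routine point that for each $\mathcal{B}\in S$ the associated renorming $Y_{\mathcal B}$, obtained by regrouping at most $N_{\mathcal B}$ consecutive Hilbert fibers via $\ell_2$, has norm equivalent to that of $Y$ with constant depending only on $N_{\mathcal B}$; by unconditionality of $E$ and the local behaviour of $T^{(2)}$ on bounded-length blocks this is the direct analog of the explicit $\ell_p$-estimate accompanying \eqref{defy}. With both bullets and this equivalence in hand, Remark~\ref{gennonmax} applies and delivers a continuum of renormings of $T^{(2)}$ whose isometry groups are pairwise non-conjugate in $\GL(T^{(2)})$ and none of which is contained in a maximal bounded subgroup of $\GL(T^{(2)})$.
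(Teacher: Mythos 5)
Your reduction to Remark~\ref{gennonmax} breaks down at its second bullet, and this is exactly the point where the paper's own proof takes a different, more careful route. You need that $T^{(2)}\cong\bigl(\sum_{n=1}^\infty\oplus H_n\bigr)_E$ for \emph{every} sequence $(H_n)$ of finite-dimensional Hilbert spaces, the $n$-th fiber sitting at the $n$-th coordinate of the standard basis $E$. Your argument places each $H_n=\ell_2^{d_n}$ on a far-out block of coordinates; this is fine fiber by fiber (the coordinates $[d_n,2d_n)$ are $2$-equivalent to $\ell_2^{d_n}$), but the closed span of such successive blocks is, by \cite[Corollary 7]{CJT84}, isomorphic to $\bigl(\sum_n\oplus H_n\bigr)_{E'}$, where $E'=(e_{p_n})$ is the \emph{subsequence} of basis vectors at the starting positions $p_n$ of the blocks --- not to $\bigl(\sum_n\oplus H_n\bigr)_E$. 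Identifying the two requires $(e_{p_n})$ to be equivalent to the whole basis, which \cite[Corollary 12]{CJT84} gives only under a growth condition of the type $p_n\le 2^{2n-1}$; this is precisely the role of \eqref{eq: growthcondition} in the paper. In a Tsirelson-type space the Euclidean dimension available at a given position is bounded by that position, so necessarily $p_n\ge d_n$, and the growth condition fails as soon as $d_n$ grows faster than exponentially (e.g.\ $d_n=2^{2^n}$); beyond that rate the basis of $T^{(2)}$, which is not subsymmetric, gives no equivalence between $E'$ and $E$. Your citation of \cite{BCLT} does not close this gap: the result the paper takes from \cite{BCLT} (in Theorem~\ref{continuum} and at the end of Remark~\ref{gennonmax}) concerns sums built over a \emph{symmetric} basis, which the basis of $T^{(2)}$ is not (its symmetrization $S(T^{(2)})$ is a different space). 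Hence the complemented embedding of $\bigl(\sum_n\oplus H_n\bigr)_E$ into $T^{(2)}$, and with it your Pe\l czy\'nski decomposition step, is unjustified for general $(H_n)$; the second bullet of Remark~\ref{gennonmax} is not verified and may well be false for $T^{(2)}$.

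The paper sidesteps this rather than solving it: it never claims the second bullet for $T^{(2)}$. Instead it builds the continuum only from fibers $\ell_2^{m^J_k}$ whose dimensions $m^J_k=2^{n^J_k}$ are distinct powers of $2$ satisfying $m^J_k\le 2^{2k-1}$, places the $k$-th fiber on the coordinates $[m^J_k,2m^J_k)$, and uses \cite[Corollaries 7 and 12]{CJT84} to show that $X_J=\bigl(\sum_k\oplus\ell_2^{m^J_k}\bigr)_{T^{(2)}}$ is isomorphic to $T^{(2)}$. The continuum is therefore indexed by subsets $J\subseteq\{2n\colon n\ge1\}$ of admissible exponents, not by the partitions $\mathcal{B}\in S$ of Section~\ref{secnonmax}; the distinct powers of $2$ then make the invariant-subspace-dimension argument of Proposition~\ref{conjugate} and the non-maximality argument of Proposition~\ref{Brenormings} go through (the latter also using that $T^{(2)}$ contains no isomorphic copy of $\ell_2$, which you did verify via your first bullet). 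To salvage your approach you would have to either prove the second bullet for $T^{(2)}$ --- a genuinely open-looking question, not a routine citation --- or restrict the fiber dimensions so that the CJT growth condition holds, which is in substance the paper's proof.
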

\begin{proof} The standard basis $\{e_{i}\}_{i=1}^\infty$ of  $T^{(2)}$ is pure 
since $\|e_i+e_j\|_{T^{(2)}}=1$ for all $i\ne j$. For each $J \subseteq \{ 2n \colon n \ge 1\}$,
let $\{n^J_k\}_{k\ge1}$ be the arrangement of $J \cup \{2n-1 \colon n \ge 1\}$ as an increasing sequence, and let $m^J_k := 2^{n^J_k}$. 
Note that for all $J$ and $k \ge 1$, we have
 \begin{equation} \label {eq: growthcondition} m^J_k \le 2^{2k-1}. \end{equation}  
Let $A^J = \cup_{k\ge1} \{i  \colon m^J_k \le i < 2 m^J_k\}$. Then by \cite[Corollary 12]{CJT84} the growth condition \eqref{eq: growthcondition} ensures that
 the subsequences $\{e_i\}_{i \in A^J}$ and $\{e_{m^J_k}\}_{k\ge1}$ of the basis $\{e_i\}_{i\ge1}$ are in fact 
both equivalent to the whole sequence $\{e_i\}_{i\ge1}$.

Note that from the definition of the $T^{(2)}$ norm we have that $\{e_i \colon m^J_k \le i < 2 m^J_k\}$ is $2$-equivalent to the unit vector basis of $\ell_2^{m^J_k}$.
 Let $F^J_k := \Span(e_i \colon m^J_k \le i < 2 m^J_k)$. Suppose $x_k \in F^J_k$ ($k\ge1$). Then by \cite[Corollary 7]{CJT84},
$$\frac{1}{\sqrt{3}} \left\|\sum_{k=1}^\infty \|x_k\| e_{m^J_k}\right\|_{T^{(2)}} \le\left \| \sum_{k=1}^\infty x_k \right\|_{T^{(2)}} \le  \sqrt{18} \left\|\sum_{k=1}^\infty \|x_k\| e_{m^J_k}\right\|_{T^{(2)}}.$$  
Since $\|\cdot\|_2$ and $\|\cdot\|_{T^{(2)}}$ are $2$-equivalent on $F^J_k$ and since $\{e_{m^J_k}\}_{k\ge1}$ is equivalent to $\{e_i\}_{i\ge1}$ there exists a constant $C>0$ such that
$$\frac{1}{C} \left\|\sum_{k=1}^\infty \|x_k\|_2 e_{k}\right\|_{T^{(2)}} \le \left\| \sum_{k=1}^\infty x_k\right \|_{T^{(2)}} \le  C \left\|\sum_{k=1}^\infty \|x_k\|_2 e_{k}\right\|_{T^{(2)}}.$$
But this implies that the the natural basis of $X_J:=(\sum_{k=1}^\infty  \oplus \ell_2^{m^J_k})_{T^{(2)}}$
is equivalent to
 $\{e_{i}\}_{i \in A^J}$ and hence also equivalent to $\{e_i\}$.
 In particular, $X_J$ is isomorphic to $T^{(2)}$ and therefore may be regarded as a renorming of $T^{(2)}$. Since $T^{(2)}$ does not contain an isomorphic copy of $\ell_2$,
arguing as in Proposition~\ref{Brenormings},
$\Isom(X_J)$ is not contained in any maximal isometry group, and, arguing  as in Proposition~\ref{conjugate}, if $J \ne J'$ then
$\Isom(X_J)$ and $\Isom(X_{J'})$ are not conjugate in the isomorphism group of $T^{(2)}$.
\end{proof}

We do not know whether or not  $T^{(2)}$  admits an equivalent maximal norm.

Let $U$ be the space with a universal unconditional basis  constructed by Pe\l czy\'nski \cite{P69}.  We finish the paper by observing that both Proposition~\ref{gencontinuum} and Theorem~\ref{Bnonmax} hold for $U$. 

\begin{theorem} The  space $U$ with a universal unconditional basis   has two continua of renormings whose isometry groups are not pairwise conjugate in the isomorphism group of $U$ 
such that the renormings of the first continuum  are maximal and for the renormings of the second continuum no isometry group is contained in  any maximal bounded subgroup of $\GL(U)$. \end{theorem}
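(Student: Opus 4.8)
The plan is to leverage the universality of $U$ together with the two main structural theorems already proved for symmetric bases, namely Proposition~\ref{gencontinuum} and Theorem~\ref{Bnonmax} (in the generalized form of Remark~\ref{gennonmax}). The key observation is that $U$, having a universal unconditional basis, contains every space with a $1$-unconditional basis in a complemented way, and moreover $U$ is isomorphic to its own $\ell_p$-type sums in the appropriate sense. First I would recall the relevant complementation and self-similarity properties of $U$: by construction $U(U)$ (the $U$-sum of infinitely many copies of $U$) is isomorphic to $U$, and $U$ contains uniformly complemented, uniformly isomorphic copies of $\ell_2^n$ for all $n$. These are exactly the hypotheses of Proposition~\ref{gencontinuum}, which would immediately furnish the first continuum of maximal renormings whose isometry groups are pairwise non-conjugate.

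For the second continuum, I would verify that $U$ satisfies the two bullet-point conditions in Remark~\ref{gennonmax}. The first condition, that no subsequence of the universal basis is equivalent to the unit vector basis of $\ell_2$, must be checked against the construction of $U$ in \cite{P69}; since $U$ contains all spaces with unconditional bases, its basis is in an appropriate sense ``spread out'' and I would argue that one can select the universal basis (or pass to a suitable normalization) so that no subsequence is $\ell_2$-equivalent. The second condition, that $U$ is isomorphic to $(\sum_{n=1}^\infty \oplus H_n)_E$ for every sequence $(H_n)$ of finite-dimensional Hilbert spaces, again follows from the universality and self-similarity of $U$ via a Pe\l czy\'nski decomposition argument analogous to the one in Proposition~\ref{gencontinuum}. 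Once both conditions are confirmed, the generalized Theorem~\ref{Bnonmax} produces the second continuum of renormings, none of whose isometry groups is contained in a maximal bounded subgroup, again pairwise non-conjugate.

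Finally, I would assemble the two families into a single statement, noting that the renormings of the first type are genuinely maximal while those of the second type are provably non-maximal and not contained in any maximal group, so the two continua are automatically distinct from each other. The pairwise non-conjugacy within each continuum comes directly from the cited results (Proposition~\ref{prop: conjugate} for the maximal family and Proposition~\ref{conjugate} for the non-maximal family), adapted to the dimension-counting invariants available here.

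The main obstacle I anticipate is verifying the first bullet of Remark~\ref{gennonmax} for $U$, namely that no subsequence of the universal unconditional basis is equivalent to the unit vector basis of $\ell_2$. Unlike the case of $\ell_p$ with $p \ne 2$, where this is transparent, the universal basis of $U$ is highly nonhomogeneous, so one cannot simply read off the behavior of subsequences; a careful invocation of the structure theory of $U$, or a replacement of the basis by an equivalent one witnessing the desired spreading property, will be required. The remaining self-similarity and complementation facts, while needing the explicit construction in \cite{P69}, should be routine consequences of universality.
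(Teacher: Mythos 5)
Your overall strategy---feeding $U$ into Proposition~\ref{gencontinuum} for the maximal continuum and into Remark~\ref{gennonmax} (i.e.\ Theorem~\ref{Bnonmax}) for the non-maximal one---is the same as the paper's, but the obstacle you flag at the end is not a technical verification that can be pushed through: it is a genuine gap, and the repair you sketch cannot work. The defining property of Pe\l czy\'nski's universal basis of $U$ is that \emph{every} unconditional basic sequence is equivalent to a subsequence of it; in particular it has a subsequence equivalent to the unit vector basis of $\ell_2$. So the first bullet of Remark~\ref{gennonmax} is not merely ``hard to read off'' for this basis---it is false, and no renorming or normalization can fix it, because equivalence of basic sequences is preserved under equivalent norms. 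Any basis retaining the universality property fails the condition for the same reason, so one must pass to a genuinely different basis. The same omission affects your first continuum: the hypotheses of Proposition~\ref{gencontinuum} include a non-hilbertian \emph{symmetric} basis, which you use tacitly but never produce for $U$.

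The paper's fix is exactly this change of basis: it invokes the (nontrivial, known) fact that $U$ has a symmetric basis $E$ \cite[p.~129]{LT}. For a symmetric basis every subsequence is equivalent to the whole basis, and $E$ is non-hilbertian (since $U$, being universal, is not isomorphic to $\ell_2$), so no subsequence of $E$ is equivalent to the $\ell_2$ basis---the first bullet of Remark~\ref{gennonmax} becomes automatic. Universality then yields $U(U)\simeq U$ and $U\simeq\bigl(\sum_{n=1}^\infty\oplus H_n\bigr)_E$ for every sequence $(H_n)$ of finite-dimensional Hilbert spaces (see \cite[p.~93]{LT}), giving the second bullet as well as the hypotheses of Proposition~\ref{gencontinuum}. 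There is one further step you never address: the Rosenthal machinery (Theorem~\ref{FHS}, on which Theorem~\ref{continuum} and Proposition~\ref{Brenormings} rest) needs the base $E$ to be \emph{pure}, and the paper arranges this by an explicit renorming, replacing the unit ball $B$ by $\overline{\conv}\bigl\{B\cup\{e_i\pm e_j\colon i\ne j\}\bigr\}$, which makes $\|e_i\|=\|e_i\pm e_j\|=1$ so that no pair $(e_i,e_j)$ is isometric to the $\ell_2^2$ basis. With the pure symmetric basis in hand, the paper then runs the analogues of Proposition~\ref{Brenormings} and of Theorem~\ref{nmax} directly (checking that Hilbert components of any larger renorming are spans of boundedly many of the original ones), and non-conjugacy follows as you indicate from Propositions~\ref{prop: conjugate} and~\ref{conjugate}.
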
 

\begin{proof}
  It is known that $U$ has a symmetric basis $E = (e_i)_{i=1}^\infty$ \cite[p.\ 129]{LT}.
  By renorming $U$, we may assume that 
 $E$ is pure. 
 To see this, let $B$ be the unit ball of any norm on $U$ for which   $E=(e_i)_{i=1}^\infty$ is a normalized $1$-symmetric basis.  Let
$$ B_1 = \overline\conv\{B \cup \{e_i \pm e_j \colon i \ne j\}\}.$$
Then $B_1$ is the unit ball for an equivalent norm $\|\cdot\|$ on $U$ such that $(e_i)_{i=1}^\infty$ is a $1$-symmetric basis satisfying
$$ \|e_i\| = \|e_i \pm e_j\| = 1 \qquad(i \ne j).$$ 
In particular, for all $i \ne j$,  $(e_i, e_j)$ is not isometric to the unit basis of $\ell_2^2$, so $E$ is pure.

The universality property of $U$ implies that 
$U(U)$ is isomorphic to $U$ (see e.g. \cite{Read})
and that $U$ is isomorphic to every space of the form
\begin{equation*} \lb{eq: anysum}
Z=\Big(\sum_{n=1}^\infty\oplus H_n\Big)_{E}, 
\end{equation*}
where $(H_n)_{n=1}^\infty$ is any collection of finite-dimensional Hilbert spaces (see e.g. \cite[p. 93]{LT}).
From this it follows that conditions of Remark~\ref{gennonmax} are satisfied, and thus Theorem~\ref{Bnonmax} is valid for $U$.

 $U$ (with the symmetric basis $E$) satisfies the hypotheses of  Proposition~\ref{gencontinuum} and hence  its conclusion holds.  
However, we prefer to give a more direct argument which also shows that the analogue of  Proposition~\ref{Brenormings} (with $E_p$ replaced by $E$ in the definition of $Y_\mathcal{B}$) holds for $U$. So consider a renorming $(\widetilde Z,\|\cdot\|_0)$ of any space $Z$ of the form as above, such that $\Isom(\widetilde Z, \|\cdot\|_0)\supseteq\Isom(Z)$. 
Then, arguing as in the proof of Theorem~\ref{nmax}, the Hilbert components of $\widetilde Z$ are spans of unions of Hilbert components of $Z$, that is,  $\bbN$ can be partitioned into disjoint subsets $\{N_j\}_{j\in J}$, so that every  Hilbert component of $\widetilde{Z}$ is given by
$$\widetilde{H}_j=\Big(\sum_{k\in N_j} \oplus H_k\Big)_2,$$
and
$$\widetilde{Z}= \Big(\sum_{j\in J} \oplus \widetilde{H}_j\Big)_{\{\widetilde{e}_j\}_{j\in J}},$$
for some pure 1-unconditional basis $\{\widetilde{e}_j\}_{j\in J}$.
Since the symmetric basis $E$ is not equivalent to the unit vector basis if $\ell_2$ it follows that the constant of equivalence between
 $(e_i)_{i=1}^n$ and the standard basis of $\ell_2^n$ becomes unbounded as $n \rightarrow \infty$.
But this implies that the sets $N_j$ 
have uniformly bounded size, i.e. 
\begin{equation*} \lb{eq: finitesizes} \max_{j \in J} |N_j| < \infty, \end{equation*}
for otherwise the norms of $Z$ and $\widetilde Z$ would not be equivalent.
The latter shows that the analogue of Proposition~\ref{Brenormings} holds for $U$.
\end{proof}
\begin{ack} The first named author was partially supported by NSF grant DMS1101490.
The second named author thanks the organizers and the Banff International Research Station  for an invitation and financial support to participate in the BIRS Workshop on Banach Spaces  in March 2012 where the work on this project  started. We also thank Valentin Ferenczi,  William B. Johnson, Denka Kutzarova and Narcisse Randrianantoanina for providing  pertinent mathematical information, and Richard J. Fleming for giving us a copy of \cite{Wood}.
\end{ack}

%\bibliography{iso}
%\bibliographystyle{siam}

\def\cprime{$'$}

\def\polhk#1{\setbox0=\hbox{#1}{\ooalign{\hidewidth\lower1.5ex\hbox{`}\hidewidth\crcr\unhbox0}}}

\end{document}